\documentclass[10pt]{amsart}
\usepackage{amssymb,amsmath,bbm}
\usepackage{stmaryrd}
\usepackage{units}
\usepackage{cancel}
\usepackage[all]{xy}
\newtheorem{thm}{Theorem}[section]
\theoremstyle{definition}
\newtheorem{definition}[thm]{Definition}
\theoremstyle{theorem}
\newtheorem{prop}[thm]{Proposition}
\newtheorem{lemma}[thm]{Lemma}
\theoremstyle{remark}
\newtheorem{example}[thm]{Example}
\allowdisplaybreaks
\numberwithin{equation}{section}
\newcommand{\im}{\mathbf{i}}
\newcommand{\e}{\mathbf{e}}
\newcommand{\n}{\vspace{\baselineskip}}
\newcommand{\dd}{\text{d}}
\newcommand{\ad}{\text{ad}}
\newcommand{\Ad}{\text{Ad}}
\newcommand{\tv}{{\tt v}}
\numberwithin{equation}{subsection}
\textwidth=16cm
\topmargin=0mm
\oddsidemargin=0mm
\evensidemargin=0mm
\textheight=22cm
\usepackage{fancyhdr}
\usepackage{lipsum}
\newcommand\shortitle{Equivalence Problem for 7-Dimensional, 2-Nondegenerate CR Manifolds}
\fancyhf{}
\fancyhead[OC]{\small\sc{\shortitle}}
\fancyhead[EC]{\small\sc{Curtis Porter}}
\cfoot{\thepage}

\pagestyle{fancy}

\begin{document}
\title{The Local \MakeUppercase{\shortitle} whose Cubic Form is of Conformal Unitary Type}

\author{Curtis Porter}
\address{Department of Mathematics, Texas A\&M University, College Station, TX 77843-3368}
\email{cporter@math.tamu.edu}

\begin{abstract}
We apply E. Cartan's method of equivalence to classify 7-dimensional, 2-nondegenerate CR manifolds $M$ up to local CR equivalence in the case that the cubic form of $M$ satisfies a certain symmetry property with respect to the Levi form of $M$. The solution to the equivalence problem is given by a parallelism on a principal bundle over $M$. When the nondegenerate part of the Levi form has definite signature, the parallelism takes values in $\mathfrak{su}(2,2)$. When this signature is split and an additional ``isotropy-switching" hypothesis is satisfied, the parallelism takes values in $\mathfrak{su}(3,1)$. Differentiating the parallelism provides a complete set of local invariants of $M$. We exhibit an explicit example of a real hypersurface in $\mathbb{C}^4$ whose invariants are nontrivial.
\end{abstract}

\maketitle
\tableofcontents

\section{Introduction}

A CR manifold $M$ of CR-dimension $m$ and CR-codimension $c$ is intrinsically defined to abstract the structure of a smooth, real, codimension-$c$ submanifold of a complex manifold of complex dimension $m+c$. The most trivial example of such a submanifold is $\mathbb{C}^m\times\mathbb{R}^c\subset\mathbb{C}^{m+c}$, and the obstruction to the existence of a local CR equivalence $M\to\mathbb{C}^m\times\mathbb{R}^c$ is the Levi form $\mathcal{L}$ of $M$, a $\mathbb{C}^c$-valued Hermitian form on the CR bundle of $M$ whose signature in the $c=1$ case is a basic invariant of $M$'s CR structure. As such, attempts to classify CR manifolds of hypersurface-type ($c=1$) fundamentally depend on the degree of degeneracy of $\mathcal{L}$. 

E. Cartan first applied his method of equivalence to real, pseudoconvex ($\mathcal{L}\neq0$) hypersurfaces in $\mathbb{C}^2$ (\cite{CartanCR}), and his work was generalized by N. Tanaka (\cite{TanakaCR}) and Chern-Moser (\cite{ChernMoser}) to solve the equivalence problem for hypersurface-type CR submanifolds whose Levi form has signature $(p,q)$ with $p+q=m$. That they ``solved the equivalence problem'' is to say they constructed an $\mathfrak{su}(p+1,q+1)$-valued parallelism $\omega$ on a principal bundle over $M$, and differentiating $\omega$ provides a complete set of local invariants of $M$.

M. Freeman later proved (\cite{FreemanStraightening}) that the Tanaka-Chern-Moser (TCM) classification could be extended to those $M$ which are locally CR equivalent to $N\times\mathbb{C}^k$, where $N$ satisfies the hypotheses of the TCM case. For such ``CR-straightenable'' $M$, $\mathcal{L}$ has a $k$-dimensional kernel, but Freeman showed that this information is not enough to determine if a general $M$ with $\dim_\mathbb{C}(\ker\mathcal{L})=k$ can be locally straightened, as higher-order generalizations of the Levi form detect obstructions to a diffeomorphism $M\to N\times\mathbb{C}^k$ being a CR equivalence. In particular, the cubic form $\mathcal{C}$ of $M$ must vanish identically for the TCM classification to apply. 

When $\mathcal{C}$ has a trivial kernel, $M$ is called \emph{2-nondegenerate}. The phenomenon of 2-nondegeneracy first appears for $\dim_\mathbb{R}M=5$, so the method of equivalence was initially employed to treat a restricted CR-equivalence class of such $M$ by P. Ebenfelt in \cite{Eb5dim}, \cite{EbCorrection}, and then the general 5-dimensional case was addressed by Isaev-Zaitsev (\cite{IsaevZaitsev}) and Medori-Spiro (\cite{MedoriSpiro}). The 5-dimensional equivalence problem is solved by the construction of an $\mathfrak{so}(3,2)$-valued parallelism on a principal bundle over $M$.

In the present paper, we consider the equivalence problem for 7-dimensional, 2-nondegenerate $M$ for which the cubic form $\mathcal{C}$ satisfies certain algebraic conditions that are automatic in the 5-dimensional case. Specifically, we show that $\mathcal{C}$ is determined by a family of antilinear operators $\ad_K$ on the CR bundle of $M$. The operators $\ad_K$ are symmetric with respect to $\mathcal{L}$, and we impose the hypothesis that they are unitary, up to (nonzero) scale. As such, we say in this case that $\mathcal{C}$ is of conformal unitary type. The nondegenerate part of $\mathcal{L}$ either has definite signature $(2,0)$ or split signature $(1,1)$. In the latter case, the operators $\ad_K$ act nontrivially on two complex, $\mathcal{L}$-isotropic lines in each fiber of the CR bundle. 

At this point, the split-signature equivalence problem branches into two distinct subcases depending on whether $\ad_K$ preserves the real span of any isotropic CR vector. We solve the equivalence problem for the case that $\mathcal{L}$ has definite signature along with the split-signature subcase that $\ad_K$ has no real eigenvalues, saving the third scenario for a future article. (A. Santi recently constructed homogeneous models for all three scenarios in \cite{Santi_models}.) Our solution to the local equivalence problem is furnished by a parallelism $\omega$ on a principal bundle over $M$, where $\omega$ takes values in $\mathfrak{su}(2,2)$ in the definite case or $\mathfrak{su}(3,1)$ in the split-signature subcase.

Differentiating $\omega$ provides a complete set of local invariants of $M$. When all of these invariants vanish, $M$ is locally CR equivalent to an $SU(2,2)$ or $SU(3,1)$ orbit $M_\star$ inside the Grassmannian manifold $Gr(2,4)$ of complex two-planes in $\mathbb{C}^4$. The study of orbits of real forms in complex flag manifolds was initiated by J. Wolf in \cite{WolfRealOrbits}, and his examination of the structure of these orbits included their foliation by maximal complex submanifolds (compare to Freeman's \cite{FreemanComplexFoliations}). Altomani, Medori, and Nacinovich study the CR structure of these orbits in \cite{CRorbits}. When the invariants of $M$ are nonvanishing, no CR equivalence $M\to M_\star\subset Gr(2,4)$ exists. An example of $M$ with nontrivial invariants is given by the hypersurface $z^4+z^{\overline{4}}+(z^3+z^{\overline{3}})\ln\left(\tfrac{(z^1+z^{\overline{1}})(z^2+z^{\overline{2}})}{(z^3+z^{\overline{3}})^2}\right)=0$ in $\mathbb{C}^4$ -- see \S\ref{nonflat} for a detailed analysis. 
 
We proceed to a description of the contents of the paper. In \S\ref{background}, the necessary background on CR geometry and 2-nondegeneracy is reviewed; much of this material is covered in detail in E. Chirka's \cite{ChirkaCR}. The equivalence problem is solved in \S\ref{eqprob}. A standard reference for the algorithmic procedure of Cartan's method of equivalence is \cite{GardnerEquiv}. The author also greatly benefited from the exposition of \cite{BryantGriffithsGrossman}, wherein the general theory is illuminated by the extended examples of Monge-Amp\`{e}re equations and conformal geometry. Because of the technical nature of the calculation, we offer a brief overview of the steps involved.

In \S\ref{initialG}, the filtration on $\mathbb{C}TM$ determined by the CR bundle and Levi kernel is encoded in a principal bundle $B_0$ of complex coframes on $M$ adapted to this filtration -- an ``order zero'' adaptation. The structure group $G_0$ of $B_0$ is 21-dimensional, and the globally defined tautological forms on $B_0$ are extended to a full coframing of $B_0$ over any local trivialization $B_0\cong G_0\times M$ by the Maurer-Cartan forms of $G_0$. These Lie-algebra-valued ``pseudoconnection'' forms are only locally determined up to combinations of the tautological forms which take values in the same Lie algebra. 

We gradually eliminate this ambiguity in the pseudoconnection forms when we restrict to subbundles of $B_0$ defined by coframes that are adapted to higher order, as this reduces the dimension of the structure group and its Lie algebra. Therefore, in \S\ref{first2}, we perform the first such reductions. Restricting to the subbundle $B_1\subset B_0$ of coframes which are ``orthonormal'' for the nondegenerate part of $\mathcal{L}$ reduces the structure group to a 17-dimensional subgroup $G_1\subset G_0$. Similarly, our hypothesis on the cubic form implies there is a subbundle $B_2\subset B_1$ of coframes which are analogously adapted to $\mathcal{C}$, and the structure group $G_2\subset G_1$ has dimension 13.

In \S\ref{absorption}, we exploit the ambiguity in the pseudoconnection forms on $B_2$ in order to simplify the expressions of the exterior derivatives of the tautological forms. This process is known as absorbing torsion, and simplifying the equations facilitates the final two reductions in \S\ref{last2}. The subbundles $B_4\subset B_3\subset B_2$ constructed therein have structure groups $G_4\subset G_3\subset G_2$ reduced from dimension 13 to $\dim G_3=9$, and ultimately to $\dim G_4=7$. At this point, no further reduction is possible without destroying the tautological forms, but the pseudoconnection forms on $B_4$ are still not uniquely defined. 

To finish the calculation, in \S\ref{prolongation} we prolong to the bundle $B_4^{(1)}$ over $B_4$ that parameterizes the remaining ambiguity of the pseudoconnection forms on $B_4$ in the same way that $B_4$ parameterizes the ambiguity in our adapted coframes of $M$. In this sense we begin the method of equivalence anew, but the structure group of $B_4^{(1)}$ as a bundle over $B_4$ is only 1-dimensional. After finding expressions for the derivatives of the tautological forms on $B_4^{(1)}$, the ambiguity in the pseudoconnection form on $B_4^{(1)}$ is completely eliminated by absorbing torsion in these expressions. 

The coframing of $B_4^{(1)}$ so constructed defines a parallelism $\omega$. In \S\ref{parallelism} we study the properties of $\omega$. The invariants obtained by differentiating $\omega$ are shown to measure the obstruction to the existence of a local CR equivalence from $M$ to a homogeneous quotient of $SU(2,2)$ or $SU(3,1)$ by a subgroup isomorphic to the structure group of $B_4^{(1)}$ as a bundle over $M$. This homogeneous space $M_\star$ is called the homogeneous model of our particular CR geometry in the spirit of F. Klein's Erlangen program (\cite[\S1.4]{CapSlovak}). In fact, we show that the lowest order invariants suffice to detect local CR equivalence to $M_\star$. 

Next we ask if $\omega$ satisfies an equivariance condition to define a Cartan connection. While this turns out to be true for the bundle $B_4^{(1)}\to B_4$, it fails for $B_4^{(1)}\to M$, as evidenced by the presence of two-forms in the curvature tensor of $\omega$ which are not semibasic for the latter bundle projection. Finally, we exhibit a real hypersurface $M\subset\mathbb{C}^4$ that is not locally isomorphic to $M_\star$, demonstrating the existence of so-called ``non-flat'' CR manifolds which satisfy our hypotheses.

\vspace{\baselineskip}\section{Background and Notation}\label{background}

\subsection{CR Manifolds and 2-Nondegeneracy}

Let $M$ be a smooth manifold of real dimension $2(n+k)+c$ for $n,k,c\in\mathbb{N}$. For any vector bundle $p:E\to M$, $E_x:=p^{-1}(x)$ denotes the fiber of $E$ over $x\in M$, $\Gamma(E)$ denotes the sheaf of smooth (local) sections of $E$, and $\mathbb{C}E$ denotes the complexified vector bundle whose fiber over $x$ is $\mathbb{C}E_x:=E_x\otimes_\mathbb{R}\mathbb{C}$. Throughout the paper we adhere to the summation convention, and we let $\im:=\sqrt{-1}$. The letters $i,j$, etc. may therefore be used as indices without any danger of confusion, and we do so without compunction.

A CR structure of CR dimension $(n+k)$ and codimension $c$ is determined by a rank-$2(n+k)$ subbundle $D$ of the tangent bundle $TM$, and an \emph{almost complex} structure $J$ on $D$; i.e., a smooth bundle endomorphism $J:D\to D$ which satisfies $J^2=-\mathbbm{1}_D$, where $\mathbbm{1}_D$ denotes the identity map of $D$. The induced action of $J$ on $\mathbb{C}D$ splits each fiber $\mathbb{C}D_x=H_x\oplus\overline{H}_x$, where $H\subset\mathbb{C}D$ denotes the smooth, $\mathbb{C}$-rank-$(n+k)$ subbundle of $\im$-eigenspaces of $J$, while $\overline{H}$ is that of $-\im$-eigenspaces. We refer to $H$ as the CR bundle of $M$.

If $M_1,M_2$ are two CR manifolds with respective CR structures $(D_1,J_1),(D_2,J_2)$ determining CR bundles $H_1,H_2$, then a \emph{CR map} is a smooth map $F:M_1\to M_2$ whose pushforward $F_*:TM_1\to TM_2$ satisfies $F_*(D_1)\subset D_2$ and $F_*\circ J_1=J_2\circ F_*$. Equivalently, a smooth map $F$ is a CR map if the induced action of $F_*$ on $\mathbb{C}TM_1$ satisfies $F_*(H_1)\subset H_2$. A local \emph{CR equivalence} is a local diffeomorphism which is a CR map.

Local sections $\Gamma(H)$ of the CR bundle are called \emph{CR vector fields}. A CR structure is \emph{integrable} if the Lie bracket of any two CR vector fields is again a CR vector field, often abbreviated $[H,H]\subset H$ (or by conjugating, $[\overline{H},\overline{H}]\subset\overline{H}$). We restrict our attention to integrable CR structures. Note that CR integrability does not imply that $D$ is an integrable subbundle of $TM$, which would additionally require $[H,\overline{H}]\subset H\oplus\overline{H}$. The latter occurs only in the most trivial examples of CR manifolds, and the obstruction to this triviality is the familiar \emph{Levi form}, the sesquilinear bundle map 
\begin{equation*}
 \mathcal{L}:H\times H\to \mathbb{C}TM/\mathbb{C}D,
\end{equation*}
defined as follows. For $X_x,Y_x\in H_x$ and $X,Y\in\Gamma(H)$ such that $X|_x=X_x$ and $Y|_x=Y_x$,
\begin{equation*}
 \mathcal{L}(X_x,Y_x):=\im[X,\overline{Y}]|_x\mod\mathbb{C}D.
\end{equation*}

The \emph{Levi kernel} $K_x\subset H_x$ is therefore given by $K_x:=\{X_x\in H_x\ |\ \mathcal{L}(X_x,Y_x)=0,\ \forall Y_x\in H_x\}$. When $K_x=0$ for every $x$, the CR structure is said to be \emph{Levi-nondegenerate} or \emph{1-nondegenerate}. We consider only the case where $K\subset H$ is a smooth subbundle of constant rank $\dim_\mathbb{C}K_x=k$, and by taking complex conjugates we could similarly define $\overline{K}\subset\overline{H}$. An application of the Newlander-Nirenberg theorem shows that $K\oplus\overline{K}\subset\mathbb{C}D$ is the complexification of a $J$-invariant, integrable subbundle $D^\circ\subset D$, so that $M$ is foliated by complex manifolds of complex dimension $k$. Thus, a local coordinate chart adapted to this \emph{Levi foliation} provides a local diffeomorphism $F:M\to N\times\mathbb{C}^k$, where $N$ is a CR manifold of CR dimension $n$ and CR codimension $c$. However, the CR structure of $N$ is not necessarily integrable, so $F$ is not a CR map in general (\cite{
FreemanStraightening}), and the obstruction to the 
existence of such a ``CR straightening'' is a generalization of the Levi 
form which 
is sometimes called the \emph{cubic form} (\cite{WebsterCubic}) or \emph{third order tensor} (\cite{EbCubic}): 
\begin{equation*}
 \mathcal{C}:K\times H\times H\to \mathbb{C}TM/\mathbb{C}D.
\end{equation*}

For $X_x\in K_x$ and $Y_{x},Z_{x}\in H_x$ with CR vector fields $X\in\Gamma(K)$ and $Y,Z\in\Gamma(H)$ which locally extend them, we define 
\begin{equation*}
 \mathcal{C}(X_x,Y_{x},Z_{x}):=\im[[X,\overline{Y}],\overline{Z}]|_x\mod\mathbb{C}D.
\end{equation*}

The kernel of the cubic form may be defined as a subbundle of $K$ in the same manner as the Levi kernel, and it is exactly when this kernel is all of $K$ that the CR structure transverse to the Levi foliation is integrable, hence the foliate coordinate map $F$ above is a CR straightening. At the other extreme is the case where the kernel of the cubic form is trivial, and in this situation we say that the CR structure is \emph{2-nondegenerate}.

\vspace{\baselineskip}\subsection{Examples in Dimension 5}

Consider $\mathbb{C}^3$ with complex coordinates $z^1,z^2,z^3$ where $z^i=x^i+\im y^i$ for $i=1,2,3$. In the following examples, we have a real hypersurface $M\subset\mathbb{C}^3$ with $n+k=2$ and $c=1$.

\begin{example}
When $M$ is the hypersurface $y^3=0$, the Levi form of $M$ is completely degenerate, so $n=0$ while $k=2$. Every 5-dimensional CR manifold $M'$ with $c=1,n=0,k=2$ is locally CR-equivalent to this trivial case.
\end{example}
\begin{example}
The ``CR Sphere'' (\cite[\S1.1.6]{CapSlovak}) $M$ is the hypersurface $|z^1|^2+|z^2|^2=|z^3|^2$. The Levi form of $M$ is completely non-degenerate, so $n=2$ and $k=0$. $M$ may be exhibited as a homogeneous quotient of $SU(3,1)$ by a parabolic subgroup $P$. By the results of Tanaka and Chern-Moser, every 5-dimensional CR manifold $M'$ with $c=1$ which is Levi-nondegenerate admits a principal $P$-bundle and a Cartan connection on this bundle whose curvature measures the obstruction to $M'$ being locally CR-equivalent to the CR sphere.
\end{example}
\begin{example}
The ``tube over the future light cone'' (\cite{FreemanLeviDeg},\cite{IsaevZaitsev}) is the hypersurface $M$ given by $(x^1)^2+(x^2)^2=(x^3)^2$ where $x^3>0$, which is a homogeneous quotient of $SO^\circ(3,2)$ (the connected component of the identity) by a non-parabolic subgroup. Here we have $n=k=1$, the lowest dimension in which 2-nondegeneracy is possible.
\end{example}

\vspace{\baselineskip}\subsection{The Cubic Form}

In order to specialize to the case of ``hypersurface-type'' CR manifolds, from now on we fix $c=1$. We neglect the trivial case when $\mathcal{L}$ is completely degenerate, so that $n>0$ and $D$ is a bracket-generating hyperplane distribution. In the hypersurface-type case, $\mathcal{L}$ and $\mathcal{C}$ take values in a complex line bundle, so a local trivialization $\mathbb{C}TM/\mathbb{C}D\to\mathbb{C}$ which maps $TM/D\to\mathbb{R}\subset\mathbb{C}$ presents $\mathcal{L}$ as a sesquilinear form on $H$. Such a trivialization is locally provided by a nonvanishing one-form $\theta^0\in\Omega^1(M)\subset\Omega^1(M,\mathbb{C})$ that annihiliates $D$ (which we denote $\theta^0\in\Gamma(D^\bot)$). In the notation above, the resulting Hermitian form is given by 
\begin{equation*}
 \mathcal{L}_0(X_x,Y_x):=\im\theta^0|_x([X,\overline{Y}]).
\end{equation*}

We similarly define $\mathcal{C}_0$. Note that $\mathcal{L}$ is actually a conformal class of such forms, as $t\theta^0$ for any real, nonvanishing $t\in C^\infty(M)$ will also trivialize $\mathbb{C}TM/\mathbb{C}D$ as needed. By changing the sign of $\theta^0$ if necessary, we may assume that the ratio of positive to negative eigenvalues of $\mathcal{L}_0$ is at least one, after which $\mathcal{L}_0$ is a determined pointwise up to a scalar which preserves this ratio. By definition of $K$, $\mathcal{L}_0$ descends to a nondegenerate Hermitian form 
\begin{equation}\label{uL0}
 \underline{\mathcal{L}}_0:H/K\times H/K \to \mathbb{C}.
\end{equation}

It is straightforward (\cite[Thm 4.4]{FreemanStraightening}) to show that $\mathcal{C}_0$ also descends to $\underline{\mathcal{C}}_0:K\times H/K\times H/K\to\mathbb{C}$. For $X_x\in K_x$ and $Y_x,Z_x\in H_x$ with $X\in\Gamma(K)$ and $Y,Z\in\Gamma(H)$ locally extending them, let an underline denote the image of a CR vector under the canonical quotient projection $H\to H/K$ (e.g., $\underline{Y}\in\Gamma(H/K)$). We have    
\begin{align*}
 \underline{\mathcal{C}}_0(X_x,\underline{Y}_x,\underline{Z}_x)=\mathcal{C}_0(X_x,Y_x,Z_x)=\im\theta^0|_x([[X,\overline{Y}],\overline{Z}]).
\end{align*}
If we fix $X_x\in K_x$, we can define $\ad_{X_x}:H_x/K_x\to H_x/K_x\cong \mathbb{C}D_x/(K_x\oplus\overline{H}_x)$ by 
\begin{equation}\label{ad}
 \ad_{X_x}(\underline{Y}_x)=[X,\overline{Y}]|_x\mod K_x\oplus\overline{H}_x,
\end{equation}
and $\ad_{X_x}$ is well-defined and tensorial (albeit antilinear) by the integrability of $K\oplus\overline{K}$ and the Leibniz rule for the Lie bracket. Therefore, 
\begin{align*}
 \underline{\mathcal{C}}_0(X_x,\underline{Y}_x,\underline{Z}_x)=\underline{\mathcal{L}}_0(\ad_{X_x}(\underline{Y}_x),\underline{Z}_x),
\end{align*}
and by the nondegeneracy of $\underline{\mathcal{L}}_0$ the cubic form is completely determined by the family of antilinear operators $\ad_X$ for $X\in K$. Note that 2-nondegeneracy implies that $\ad_X$ and $\ad_{X'}$ are linearly independent endomorphisms whenever $X$ and $X'$ are linearly independent. Another property of this family of operators follows from the Jacobi identity,
\begin{align*}
\underline{\mathcal{L}}_0(\ad_{X_x}(\underline{Y}_x),\underline{Z}_x)&=\im\theta^0|_x([[X,\overline{Y}],\overline{Z}])\\
&=\im\theta^0|_x(-\underbrace{[[\overline{Y},\overline{Z}],X]}_{\in H\oplus\overline{H}}-[[\overline{Z},X],\overline{Y}])\\
&=-\im\theta^0|_x([\overline{Y},[X,\overline{Z}]])\\
&=\overline{\underline{\mathcal{L}}_0(\underline{Y}_x,\ad_{X_x}(\underline{Z}_x))}.
\end{align*}

Therefore, the antilinear operators $\ad_{X}$ for $X\in K$ satisfy a sort of normality property with respect to $\underline{\mathcal{L}}_0$. Distinguished among the set of normal operators on a Hermitian inner product space is the group of unitary operators that act bijectively and preserve the inner product. More generally, we could consider those invertible operators which preserve the inner product up to some nonzero conformal factor, and it is in this vein that we offer: 

\begin{definition}
The cubic form $\mathcal{C}$ of a 2-nondegenerate CR manifold $M$ is said to be of \emph{conformal unitary type} if 
\begin{align*}
&\underline{\mathcal{L}}(\ad_X(\underline{Y}),\ad_X(\underline{Z}))=\lambda\overline{\underline{\mathcal{L}}(\underline{Y},\underline{Z})},
&\forall X\in K;\  Y,Z\in H,
\end{align*}
where $\lambda$ is a non-vanishing, $\mathbb{C}$-valued function on $M$.
\end{definition}
Note that the cubic form of a 5-dimensional, 2-nondegenerate CR manifold is automatically of conformal unitary type. This paper will treat the most direct generalization of the hypotheses for the 5-dimensional case. We therefore determine a complete set of local invariants of $M$ under any CR equivalence, where $M$ is a 2-nondegenerate, hypersurface-type CR manifold with 
\begin{align*}
&\dim_\mathbb{R}M=7,
&\text{rank}_\mathbb{C}K=1,
\end{align*}
such that $\mathcal{C}$ is of conformal unitary type. Our hypotheses imply $H/K$ has complex rank 2, so $\underline{\mathcal{L}}_0$ either has signature $(2,0)$ or $(1,1)$. In order to consider the most general case, we let 
\begin{align}\label{epsilon_delta}
&\epsilon=\pm1,
&\delta_1=0,
&&\delta_{-1}=1,
&&\Rightarrow \epsilon=(-1)^{\delta_\epsilon}. 
\end{align}
We can now say that the signature of $\underline{\mathcal{L}}_0$ is $(2-\delta_\epsilon,\delta_\epsilon)$, and any matrix representation of this Hermitian form may be diagonalized with diagonal entries $1,\epsilon$. 

Even so, there are two distinct subcases when $\epsilon=-1$, and the normalizations in the calculation of \S\ref{eqprob} will only permit us to consider one of them simultaneously with our treatment of the definite ($\epsilon=1$) case. Briefly speaking, $H/K$ is the complex span of two $\underline{\mathcal{L}}_0$-isotropic lines when $\epsilon=-1$, and the $\mathbb{R}$-linear action of $\ad_K$ on $H/K$ may or may not preserve the real span of any vectors lying on these isotropic lines, leading to the following    
\begin{definition}\label{isotropy}
When $\underline{\mathcal{L}}$ has signature $(1,1)$ and $\ad_K:H/K\to H/K$ preserves a real, $\underline{\mathcal{L}}$-isotropic line, we say $\ad_K$ is \emph{isotropy-preserving}. Alternatively, the case when $\ad_K$ does not preserve any real isotropic lines will be called \emph{isotropy-switching}.     
\end{definition}

Lemma \ref{ad_K_normalization} will show that either the isotropy-preserving subcase or the isotropy-switching subcase can be studied in conjunction with the definite case, but the indicated choices of normalization necessarily exclude one of these $\epsilon=-1$ subcases. Because the Lie-algebra-valued parallelisms for the $\epsilon=1$ and isotropy-switching scenarios are readily constructed simultaneously (c.f. \S\ref{homogmodel}), we restrict our attention to these. Homogeneous models for all three scenarios are discussed in A. Santi's \cite{Santi_models}.

\vspace{\baselineskip}\subsection{Local Coframing Formulation}\label{coframings}

A \emph{0-adapted coframing} $\theta$ in a neighborhood of $x\in M$ consists of local one-forms $\theta^0,\theta^1,\theta^2,\theta^3\in\Gamma(\overline{H}^\bot)\subset\Omega^1(M,\mathbb{C})$ -- and their complex conjugates -- so that $\theta$ satisfies

\begin{align*}
 &\theta^0\in\Gamma(D^\bot)\subset\Omega^1(M),
 &\theta^1,\theta^2\in\Gamma(K^\bot)\subset\Omega^1(M,\mathbb{C}),
\end{align*}
\begin{align*}
\theta^0\wedge\theta^1\wedge\theta^2\wedge\theta^3\wedge\theta^{\overline{1}}\wedge\theta^{\overline{2}}\wedge\theta^{\overline{3}}\neq 0.
\end{align*}
Here, $\theta^{\overline{k}}$ denotes the complex conjugate $\overline{\theta^k}$ of a $\mathbb{C}$-valued form. CR integrability $[\overline{H},\overline{H}]\subset\overline{H}$ is equivalent to 
\begin{equation}\label{CRint}
 \dd\theta^i\equiv0\mod\{\theta^0,\theta^1,\theta^2,\theta^3\};\  0\leq i\leq 3,  
\end{equation}
while the integrability of $D^\circ$ (recall that $\mathbb{C}D^\circ=K\oplus\overline{K}$) additionally gives
\begin{equation}\label{Lkerint}
 \dd\theta^l\equiv0\mod\{\theta^0,\theta^1,\theta^2,\theta^{\overline{1}},\theta^{\overline{2}}\};\  0\leq l\leq 2.  
\end{equation}
Furthermore, since $\theta^0$ is $\mathbb{R}$-valued, 
\begin{align}\label{Lform}
 \dd\theta^0\equiv \im\ell_{j\overline{k}}\theta^j\wedge\theta^{\overline{k}}\mod\{\theta^0\}; \ \ (1\leq j,k\leq2),
\end{align}
for some $\ell_{j\overline{k}}=\overline{\ell_{k\overline{j}}}\in C^\infty(M,\mathbb{C})$, where $\ell:=\left[\begin{smallmatrix}\ell_{1\overline{1}}&\ell_{1\overline{2}}\\\ell_{2\overline{1}}&\ell_{2\overline{2}}\end{smallmatrix}\right]$ is nondegenerate and provides a local matrix representation of $\underline{\mathcal{L}}_0$ (as a Hermitian form) as in \eqref{uL0}. 

We invoke \eqref{CRint} and \eqref{Lkerint} to write 
\begin{equation*}
 \dd\theta^j\equiv u^j_{\overline{k}}\theta^3\wedge\theta^{\overline{k}}\mod\{\theta^0,\theta^1,\theta^2\}; \ (1\leq j,k\leq 2),  
\end{equation*}
for some $u^j_{\overline{k}}\in C^\infty(M,\mathbb{C})$, so that $u:=\left[\begin{smallmatrix}u^1_{\overline{1}}&u^1_{\overline{2}}\\u^2_{\overline{1}}&u^2_{\overline{2}}\end{smallmatrix}\right]$ is a local matrix representation of $\ad_{X_3}$ as in \eqref{ad}, where $X_3\in\Gamma(K)$ is dual to $\theta^3$ in our coframing $\theta$ -- i.e., $\theta^3(X_3)=1$ while $\theta^l(X_3)=\theta^{\overline{i}}(X_3)=0$ for $0\leq l\leq 2$ and $1\leq i\leq 3$. The hypothesis of 2-nondegeneracy merely says that the matrix $u$ is not zero, but the hypothesis that the cubic form of $M$ is of conformal unitary type implies that $u$ is (up to conjugation and scale) unitary with respect to the $2\times 2$ matrix $\ell$ -- specifically, $u$ is invertible and 
\begin{align}\label{conformalunitary}
\overline{u}^t\ell u=\lambda\overline{\ell}, 
\end{align}
for some non-vanishing $\lambda\in C^\infty(M,\mathbb{C})$.

Expressing $\theta$ as the column vector $[\theta^0,\theta^1,\theta^2,\theta^3]^t$ and fixing index ranges $1\leq j,k\leq 2$, we can summarize our analysis in this section thusly:
\begin{equation}\label{MSE}
 \dd\theta=\left[\begin{array}{c}\dd\theta^0\\\dd\theta^1\\\dd\theta^2\\\dd\theta^3\end{array}\right]\equiv
 \left[\begin{array}{c}
\im\ell_{j\overline{k}}\theta^j\wedge\theta^{\overline{k}}\\
u^1_{\overline{k}}\theta^3\wedge\theta^{\overline{k}}\\
u^2_{\overline{k}}\theta^3\wedge\theta^{\overline{k}}\\
0\end{array}\right] \mod\left\{\begin{array}{c}
\theta^0\\\theta^0,\theta^1,\theta^2\\\theta^0,\theta^1,\theta^2\\\theta^0,\theta^1,\theta^2,\theta^3\end{array}\right\}.
\end{equation}

We conclude this section with a remark about notation. As we have above, we will continue to denote the conjugate of every $\mathbb{C}$-valued one-form by putting overlines on its indices. By contrast, we indicate the conjugate of a $\mathbb{C}$-valued function with an overline on the function itself, without changing the indices. For example, the conjugate of the second identity in \eqref{MSE} would be written 
\begin{equation*}
 \dd\theta^{\overline{1}}\equiv \overline{u}^1_{\overline{1}}\theta^{\overline{3}}\wedge\theta^{1}+\overline{u}^1_{\overline{2}}\theta^{\overline{3}}\wedge\theta^{2}\mod\{\theta^0,\theta^{\overline{1}},\theta^{\overline{2}}\}.
\end{equation*}

\vspace{\baselineskip}\section{The Equivalence Problem}\label{eqprob}

\subsection{Initial $G$-Structure}\label{initialG}

Let $V=\mathbb{R}\oplus\mathbb{C}^3$, presented as column vectors
\begin{align*}
 V=\left\{\left[\begin{smallmatrix}r\\z_1\\z_2\\z_3\end{smallmatrix}\right]: r\in\mathbb{R}; z_1,z_2,z_3\in\mathbb{C}\right\}.
\end{align*}
For $x\in M$, a coframe $v_x:T_xM\stackrel{\simeq}{\longrightarrow} V$ is a linear isomorphism that will be called \emph{0-adapted} if 

\begin{itemize}
 \item $v_x(D_x)=\left\{\left[\begin{smallmatrix}0\\z_1\\z_2\\z_3\end{smallmatrix}\right]: z_1,z_2,z_3\in\mathbb{C}\right\}$,\n
 
 \item $v_x|_{D_x}\circ J=\im v_x|_{D_x}$,\n
 
 \item $v_x(D^\circ_x)=\left\{\left[\begin{smallmatrix}0\\0\\0\\z_3\end{smallmatrix}\right]: z_3\in\mathbb{C}\right\}$.
\end{itemize}\n

Let $\underline{\pi}:B_0\to M$ denote the bundle of all 0-adapted coframes, where $\underline{\pi}(v_x)=x$. A local section $s:M\to B_0$ in a neighborhood of $x$ with $s(x)=v_x$ is a 0-adapted coframing $\theta$, written as a column vector like in \S \ref{coframings}, so that $\theta|_x=v_x$. The \emph{tautological one-form} $\eta\in\Omega^1(B_0,V)$ is intrinsically (therefore globally) defined by 
\begin{equation}\label{intautform}
 \eta|_{v_x}(X):=v_x(\underline{\pi}_*(X|_{v_x})), \hspace{1 cm}\forall X\in\Gamma(TB_0).
\end{equation}
It follows directly from the definition of $\eta$ that if $\theta$ is a 0-adapted coframing given by a local section $s$ of $B_0$, then the tautological form satisfies the so-called \emph{reproducing property}: $\theta=s^*\eta$. Naturally, the reproducing property extends to 
\begin{equation}\label{reprop}
 \dd\theta=s^*\dd\eta.
\end{equation}

We will find a local expression for $\eta$ by locally trivializing $B_0$ in a neighborhood of any $x\in M$. To this end, first note that if $v_x,\tilde{v}_x\in B_0$ are two coframes in the fiber over $x$, then by the definition of 0-adaptation, it must be that 

\begin{align}\label{G0}
 &\tilde{v}_x=\left[\begin{array}{cccc}t&0&0&0\\c^1&a^1_1&a^1_2&0\\c^2&a^2_1&a^2_2&0\\c^3&b_1&b_2&b_3\end{array}\right]v_x; &\text{ where } 
&&\left\{\begin{array}{lr} 
 t\in\mathbb{R}\setminus\{0\},&\\
  & \\
 c^j,b_k\in\mathbb{C}\  (b_3\neq 0); &1\leq j,k\leq3,\\
  & \\
 \left[\begin{smallmatrix}a^1_1&a^1_2\\a^2_1&a^2_2\end{smallmatrix}\right]\in GL_2\mathbb{C}.
 \end{array}\right. 
\end{align}
Call the subgroup of $GL(V)$ given by all such matrices $G_0$, and its Lie algebra $\mathfrak{g}_0$. $G_0$ acts transitively on the fibers of $B_0$, so fixing a 0-adapted coframing $\theta_{\mathbbm{1}}$ in a neighborhood of $x$ determines a local trivialization $B_0\cong G_0\times M$, as every other $\theta$ may be written 
\begin{align}\label{B0fibercoords}
 \left[\begin{array}{c}\theta^0\\\theta^1\\\theta^2\\\theta^3\end{array}\right]= 
 \left[\begin{array}{cccc}t&0&0&0\\c^1&a^1_1&a^1_2&0\\c^2&a^2_1&a^2_2&0\\c^3&b_1&b_2&b_3\end{array}\right]
 \left[\begin{array}{c}\theta^0_{\mathbbm{1}}\\\theta^1_{\mathbbm{1}}\\\theta^2_{\mathbbm{1}}\\\theta^3_{\mathbbm{1}}\end{array}\right] 
\end{align}
for some $G_0$-valued matrix of smooth functions defined on our neighborhood of $x$. In this trivialization, the fixed coframing $\theta_{\mathbbm{1}}$ corresponds to the identity matrix $\mathbbm{1}\in G_0$, and by restricting to $\theta|_x$, $\theta_{\mathbbm{1}}|_x$ on each side of \eqref{B0fibercoords}, we see that the $G_0$-valued matrix entries parameterize all $v_x\in B_0$ in the fiber over $x$, hence furnish local fiber coordinates for $B_0$. 

By the reproducing property, the tautological $V$-valued one-form $\eta$ on $B_0$ may now be expressed locally as
\begin{align}\label{loctautformlong}
 \left[\begin{array}{c}\eta^0\\\eta^1\\\eta^2\\\eta^3\end{array}\right]= 
 \left[\begin{array}{cccc}t&0&0&0\\c^1&a^1_1&a^1_2&0\\c^2&a^2_1&a^2_2&0\\c^3&b_1&b_2&b_3\end{array}\right]
 \left[\begin{array}{c}\underline{\pi}^*\theta^0_{\mathbbm{1}}\\\underline{\pi}^*\theta^1_{\mathbbm{1}}\\\underline{\pi}^*\theta^2_{\mathbbm{1}}\\\underline{\pi}^*\theta^3_{\mathbbm{1}}\end{array}\right], 
\end{align}
or more succinctly,
\begin{equation}\label{loctautformshort}
 \eta=g^{-1}\underline{\pi}^*\theta_{\mathbbm{1}}.
\end{equation}
The matrix in \eqref{loctautformlong} is considered to be the inverse $g^{-1}\in C^\infty(B_0,G_0)$ in \eqref{loctautformshort} so that left-multiplication on coframes defines a right-principal $G_0$ action on $B_0$. Differentiating \eqref{loctautformshort} yields the structure equation 
\begin{equation}\label{B0SEshort}
 \dd\eta=-g^{-1}\dd g\wedge\eta+g^{-1}\underline{\pi}^*\dd\theta_{\mathbbm{1}}.
\end{equation}
The \emph{pseudoconnection form} $g^{-1}\dd g$ takes values in the Lie algebra $\mathfrak{g}_0$. We see from the parameterization \eqref{G0} of $G_0$ that $\mathfrak{g}_0$ may be presented as matrices of the form 
\begin{align*}
 \left[\begin{array}{cccc}\tau&0&0&0\\\gamma^1&\alpha^1_1&\alpha^1_2&0\\\gamma^2&\alpha^2_1&\alpha^2_2&0\\\gamma^3&\beta_1&\beta_2&\beta_3\end{array}\right],
\end{align*}
where all of the entries are independent, $\tau\in\mathbb{R}$, and the rest of the entries take arbitrary complex values. For later convenience, we prefer instead to use the following, less obvious choice of parameterization for $\mathfrak{g}_0$:
\begin{align*}
 \left[\begin{array}{cccc}2\tau&0&0&0\\\gamma^1&\alpha^1_1&\alpha^1_2&0\\\gamma^2&\alpha^2_1&\alpha^2_2&0\\\gamma^3&\im\gamma^2-\beta_1&\im\gamma^1-\beta_2&\beta_3\end{array}\right].
\end{align*}
By taking the entries of this matrix to be forms in $\Omega^1(B_0,\mathbb{C})$ which complete $\eta$ to a local coframing of $B_0$, the structure equation \eqref{B0SEshort} can be written
\begin{align}\label{B0SElong}
 \dd \left[\begin{array}{c}\eta^0\\\eta^1\\\eta^2\\\eta^3\end{array}\right]=
 -\left[\begin{array}{cccc}2\tau&0&0&0\\\gamma^1&\alpha^1_1&\alpha^1_2&0\\\gamma^2&\alpha^2_1&\alpha^2_2&0\\\gamma^3&\im\gamma^2-\beta_1&\im\gamma^1-\beta_2&\beta_3\end{array}\right]\wedge
 \left[\begin{array}{c}\eta^0\\\eta^1\\\eta^2\\\eta^3\end{array}\right]+
 \left[\begin{array}{c}\Xi^0\\\Xi^1\\\Xi^2\\\Xi^3\end{array}\right],
\end{align}
where the semibasic two-form $\Xi:=g^{-1}\underline{\pi}^*\dd\theta_{\mathbbm{1}}\in\Omega^2(B_0,V)$ is apparent torsion. Note that the left-hand side of \eqref{B0SEshort} is a globally defined two-form, while the terms on the right-hand side each depend on our local trivialization of $B_0$. In particular, the pseudoconnection forms in the matrix $g^{-1}\dd g$ are determined only up to $\mathfrak{g}_0$-compatible combinations of the semibasic one-forms $\{\eta^j,\eta^{\overline{j}}\}_{j=0}^3$, which will in turn affect the presentation of the apparent torsion forms. We will use this ambiguity to simplify our local expression for $\Xi$, but first we must find what it is.

Fix index ranges $1\leq j,k\leq 2$. The differential reproducing property \eqref{reprop} and the identites \eqref{MSE} imply
\begin{equation*}\begin{aligned}
 \Xi^0&=\im L_{j\overline{k}}\eta^j\wedge\eta^{\overline{k}}+\xi^0_0\wedge\eta^0,\\
 \Xi^j&= U^j_{\overline{k}}\eta^3\wedge\eta^{\overline{k}}+\xi^j_0\wedge\eta^0+\xi^j_1\wedge\eta^1+\xi^j_2\wedge\eta^2,\\
 \Xi^3&=\xi^3_0\wedge\eta^0+\xi^3_1\wedge\eta^1+\xi^3_2\wedge\eta^2+\xi^3_3\wedge\eta^3,
\end{aligned}\end{equation*}
for some unknown, semibasic one-forms $\xi\in\Omega^1(B_0,\mathbb{C})$ (with $\xi^0_0$ $\mathbb{R}$-valued) and functions $L_{j\overline{k}}, U^j_{\overline{k}}\in C^\infty(B_0,\mathbb{C})$ whose value along the coframing $\theta$ described in \S \ref{coframings} would be 
\begin{align}\label{LlUu}
&L_{j\overline{k}}(\theta|_x)=\ell_{j\overline{k}}(x)
&\text{and}
&&U^j_{\overline{k}}(\theta|_x)=u^j_{\overline{k}}(x).
\end{align}

We will ``absorb'' as much of $\Xi$ into our pseudoconnection forms as possible. It is a standard notational abuse to recycle the name of a pseudoconnection form after altering it to absorb apparent torsion. We will try to minimize confusion by denoting modified forms with hats, and then dropping the hats from the notation as each phase of the absorption process terminates. For example, the top line of \eqref{B0SElong} reads
\begin{equation*}
\begin{aligned}
 \dd\eta^0&=-2\tau\wedge\eta^0+\im L_{j\overline{k}}\eta^j\wedge\eta^{\overline{k}}+\xi^0_0\wedge\eta^0\\
&=-(2\tau-\xi^0_0)\wedge\eta^0+\im L_{j\overline{k}}\eta^j\wedge\eta^{\overline{k}},
\end{aligned}\end{equation*}
so if we let $2\hat{\tau}=2\tau-\xi^0_0$, we have simplified the expression to 
\begin{equation*}
  \dd\eta^0=-2\hat{\tau}\wedge\eta^0+\im L_{j\overline{k}}\eta^j\wedge\eta^{\overline{k}}.
\end{equation*}
Observe that $2\hat{\tau}$ must remain $\mathbb{R}$-valued for this absorption to be $\mathfrak{g}_0$-compatible, which is exactly the case as $\xi^0_0$ is $\mathbb{R}$-valued. To absorb the rest of the $\xi$'s, set 
\begin{align*}
&\hat{\alpha}^j_k=\alpha^j_k-\xi^j_k, &&\hat{\gamma}^j=\gamma^j-\xi^j_0, &&&\hat{\gamma}^3=\gamma^3-\xi^3_0,\\
&\hat{\beta}_1=\beta_1-\im\xi^2_0+\xi^3_1, &&\hat{\beta}_2=\beta_2-\im\xi^1_0+\xi^3_2, &&&\hat{\beta}_3=\beta_3-\xi^3_3.
\end{align*}
Now the structure equations \eqref{B0SElong} may be written
\begin{align}\label{B0SE}
 \dd \left[\begin{array}{c}\eta^0\\\eta^1\\\eta^2\\\eta^3\end{array}\right]=
 -\left[\begin{array}{cccc}2\hat{\tau}&0&0&0\\\hat{\gamma}^1&\hat{\alpha}^1_1&\hat{\alpha}^1_2&0\\\hat{\gamma}^2&\hat{\alpha}^2_1&\hat{\alpha}^2_2&0\\\hat{\gamma}^3&\im\hat{\gamma}^2-\hat{\beta}_1&\im\hat{\gamma}^1-\hat{\beta}_2&\hat{\beta}_3\end{array}\right]\wedge
 \left[\begin{array}{c}\eta^0\\\eta^1\\\eta^2\\\eta^3\end{array}\right]+
 \left[\begin{array}{c}\im L_{j\overline{k}}\eta^j\wedge\eta^{\overline{k}}\\U^1_{\overline{k}}\eta^3\wedge\eta^{\overline{k}} \\ U^2_{\overline{k}}\eta^3\wedge\eta^{\overline{k}} \\0\end{array}\right].
\end{align}

\vspace{\baselineskip}\subsection{First Two Reductions}\label{first2}

We are done absorbing torsion for the moment, so we will drop the hats off of the pseudoconnection forms in \eqref{B0SE}. The remaining torsion terms are not absorbable, but we can normalize them by first ascertaining how the functions $L,U$ in \eqref{B0SE} vary along the fiber over fixed points of $M$, then choosing agreeable values from among those that $L,U$ achieve in each fiber, and finally restricting to a subbundle of $B_0$ determined by the subgroup of $G_0$ which stabilizes the chosen torsion tensor over each fiber. To proceed, first differentiate the equation for $\dd\eta^0$ and reduce modulo $\eta^0,\eta^3,\eta^{\overline{3}}$.

\begin{equation*}
\begin{aligned}
 0&=\dd(\dd\eta^0)\\
&\equiv
\im(\dd L_{1\overline{1}}+ L_{1\overline{1}}(2\tau- \alpha^1_1- \alpha^{\overline{1}}_{\overline{1}})- L_{1\overline{2}}\alpha^{\overline{2}}_{\overline{1}}- L_{2\overline{1}}\alpha^2_1)\wedge\eta^1\wedge\eta^{\overline{1}}\\
&+\im(\dd L_{1\overline{2}}+ L_{1\overline{2}}(2\tau- \alpha^1_1- \alpha^{\overline{2}}_{\overline{2}})- L_{1\overline{1}}\alpha^{\overline{1}}_{\overline{2}}- L_{2\overline{2}}\alpha^2_1)\wedge\eta^1\wedge\eta^{\overline{2}}\\
&+\im(\dd L_{2\overline{1}}+ L_{2\overline{1}}(2\tau- \alpha^2_2- \alpha^{\overline{1}}_{\overline{1}})- L_{1\overline{1}}\alpha^1_2- L_{2\overline{2}}\alpha^{\overline{2}}_{\overline{1}})\wedge\eta^2\wedge\eta^{\overline{1}}\\
&+\im(\dd L_{2\overline{2}}+ L_{2\overline{2}}(2\tau- \alpha^2_2- \alpha^{\overline{2}}_{\overline{2}})- L_{1\overline{2}}\alpha^1_2- L_{2\overline{1}}\alpha^{\overline{1}}_{\overline{2}})\wedge\eta^2\wedge\eta^{\overline{2}}
&\mod\{\eta^0,\eta^3,\eta^{\overline{3}}\}.
\end{aligned}
\end{equation*}
If we momentarily agree that $j\neq k$, we can summarize these conditions 
\begin{equation}\label{dL}
\left.\begin{array}{ll}
\dd L_{j\overline{j}}&\equiv  -L_{j\overline{j}}(2\tau- \alpha^j_j- \alpha^{\overline{j}}_{\overline{j}})+ L_{j\overline{k}}\alpha^{\overline{k}}_{\overline{j}}+ L_{k\overline{j}}\alpha^k_j\\
\dd L_{j\overline{k}}&\equiv -L_{j\overline{k}}(2\tau- \alpha^j_j- \alpha^{\overline{k}}_{\overline{k}})+ L_{j\overline{j}}\alpha^{\overline{j}}_{\overline{k}}+ L_{k\overline{k}}\alpha^k_j\end{array}\right\} \mod\{\eta^0,\eta^1,\eta^2,\eta^3,\eta^{\overline{1}},\eta^{\overline{2}},\eta^{\overline{3}}\}.
\end{equation}
Using the notation \eqref{epsilon_delta}, we will restrict to the subbundle $B_1\subset B_0$ given by the level sets 
\begin{align}\label{Levi_diagonal}
&L_{1\overline{1}}=1,
&L_{2\overline{2}}=\epsilon,
&&L_{1\overline{2}}=L_{2\overline{1}}=0,
\end{align}
which is simply the bundle of 0-adapted coframes in which $\theta^1,\theta^2$ are dual to CR vector fields that are orthonormal for the Levi form. Such coframings must exist, as the Levi form is Hermitian. In the notation of \S\ref{coframings}, $B_1$ is determined by local 0-adapted coframings $\theta$ which additionally satisfy
\begin{align}\label{MSE1}
 \dd\left[\begin{array}{c}\theta^0\\\theta^1\\\theta^2\\\theta^3\end{array}\right]=
 \left[\begin{array}{c}
\im\theta^1\wedge\theta^{\overline{1}}+\epsilon\im\theta^2\wedge\theta^{\overline{2}}\\
u^1_{\overline{1}}\theta^3\wedge\theta^{\overline{1}}+u^1_{\overline{2}}\theta^3\wedge\theta^{\overline{2}}\\
u^2_{\overline{1}}\theta^3\wedge\theta^{\overline{1}}+u^2_{\overline{2}}\theta^3\wedge\theta^{\overline{2}}\\
0\end{array}\right] \mod\left\{\begin{array}{c}
\theta^0\\\theta^0,\theta^1,\theta^2\\\theta^0,\theta^1,\theta^2\\\theta^0,\theta^1,\theta^2,\theta^3\end{array}\right\}.
\end{align}

We call such coframings \emph{1-adapted}, and fix a new $\theta_{\mathbbm{1}}$ among them to locally trivialize $B_1$. Computing directly with the coordinates of $G_0$ as in \eqref{B0fibercoords}, one finds that any such $\theta$ with its Levi form so normalized differs from $\theta_{\mathbbm{1}}$ by an element in $G_0$ with 
\begin{align}\label{G1}
&t=|a^1_1|^2+\epsilon|a^2_1|^2=\epsilon|a^1_2|^2+|a^2_2|^2 & \text{and}
&&a^1_1\overline{a}^1_2+\epsilon a^2_1\overline{a}^2_2=0,
\end{align}
(which together imply $|a^1_1|^2=|a^2_2|^2$). This subgroup $G_1\subset G_0$ is therefore the stabilizer of our choice of torsion normalization, and the structure group of the subbundle $B_1\subset B_0$. When restricted to $B_1$, we see by \eqref{dL} that the pseudoconnection forms satisfy
\begin{align}\label{redtormod1}
 &2\tau\equiv \alpha^1_1+\alpha^{\overline{1}}_{\overline{1}}\equiv \alpha^2_2+\alpha^{\overline{2}}_{\overline{2}},
 &\alpha^1_2+\epsilon\alpha^{\overline{2}}_{\overline{1}}\equiv0 &&\mod\{\eta^0,\eta^1,\eta^2,\eta^3,\eta^{\overline{1}},\eta^{\overline{2}},\eta^{\overline{3}}\}.
\end{align}

Let $\iota_1:B_1\hookrightarrow B_0$ be the inclusion map. When we pull back our coframing of $B_0$ along $\iota_1$ to get a coframing of $B_1$, we introduce new names for some one-forms, but we also recycle many of the current names. For those being recycled, we view the following definition as recursive. Those being recycled are
\begin{align*}
 \left[\begin{array}{c}\eta\\\tau\\\gamma^j\\\beta_k\end{array}\right]:=
 \iota_1^*\left[\begin{array}{c}\eta\\\tau\\\gamma^j\\\beta_k\end{array}\right]; \hspace{1cm}(1\leq j,k\leq 3),
\end{align*}
while we also introduce 
\begin{align}\label{rsdef}
 \left[\begin{array}{c}\varrho\\\varsigma\\\alpha^1\\\xi^1_1\\\zeta^2_1\\\xi^2_2\end{array}\right]:=
 \iota_1^*
 \left[\begin{array}{c}-\tfrac{\im}{2}(\alpha^1_1-\alpha^{\overline{1}}_{\overline{1}})\\-\tfrac{\im}{2}(\alpha^2_2-\alpha^{\overline{2}}_{\overline{2}})\\\alpha^1_2\\\tau-\tfrac{1}{2}(\alpha^1_1+\alpha^{\overline{1}}_{\overline{1}})\\-(\alpha^2_1+\epsilon\alpha^{\overline{1}}_{\overline{2}})\\\tau-\tfrac{1}{2}(\alpha^2_2+\alpha^{\overline{2}}_{\overline{2}})\end{array}\right].
\end{align}
Note that $\xi^1_1$ and $\xi^2_2$ are $\mathbb{R}$-valued, and by \eqref{redtormod1}, we know 
\begin{align}\label{xismod1}
 \xi^1_1,\zeta^2_1,\xi^2_2\equiv0\mod\{\eta^0,\eta^1,\eta^2,\eta^3,\eta^{\overline{1}},\eta^{\overline{2}},\eta^{\overline{3}}\}.
\end{align}
If we keep the names $U^j_{\overline{k}}:=\iota_1^*U^j_{\overline{k}}$, then pulling back \eqref{B0SE} to $B_1$ yields new structure equations
\begin{equation}\label{B1SE}
\begin{aligned} 
 \dd \left[\begin{array}{c}\eta^0\\\eta^1\\\eta^2\\\eta^3\end{array}\right]=
 -\left[\begin{array}{cccc}2\tau&0&0&0\\\gamma^1&\tau+\im\varrho&\alpha^1&0\\\gamma^2&-\epsilon\alpha^{\overline{1}}&\tau+\im\varsigma&0\\\gamma^3&\im\gamma^2-\beta_1&\im\gamma^1-\beta_2&\beta_3\end{array}\right]\wedge
 \left[\begin{array}{c}\eta^0\\\eta^1\\\eta^2\\\eta^3\end{array}\right]
 +
 \left[\begin{array}{c}
 \im\eta^1\wedge\eta^{\overline{1}}+\epsilon\im\eta^2\wedge\eta^{\overline{2}}\\U^1_{\overline{k}}\eta^3\wedge\eta^{\overline{k}}+\xi^1_1\wedge\eta^1 \\ U^2_{\overline{k}}\eta^3\wedge\eta^{\overline{k}}+\zeta^2_1\wedge\eta^1+\xi^2_2\wedge\eta^2 \\0\end{array}\right].
\end{aligned}
\end{equation}

We turn our attention to normalizing the $U^j_{\overline{k}}$. Differentiating $\dd\eta^0$ and reducing modulo $\eta^0,\eta^1,\eta^2$ will reveal that these functions are not independent on $B_1$.
\begin{align*}
 0=\dd(\dd\eta^0)\equiv 
\im (U^1_{\overline{2}}-\epsilon U^2_{\overline{1}})\eta^3\wedge\eta^{\overline{2}}\wedge\eta^{\overline{1}} &\mod\{\eta^0,\eta^1,\eta^2\},
\end{align*}
so $U^1_{\overline{2}}=\epsilon U^2_{\overline{1}}$, and we can declutter some notation by naming 
\begin{align}\label{U_on_B1}
&U:=U^2_{\overline{1}}=\epsilon U^1_{\overline{2}}, 
&U_1:=U^1_{\overline{1}}, 
&&U_2:=U^2_{\overline{2}}. 
\end{align}
The hypothesis that the cubic form is of conformal unitary type implies some additional relations between the functions \eqref{U_on_B1}. From \eqref{conformalunitary}, \eqref{LlUu}, and \eqref{Levi_diagonal}, we deduce
\begin{align}\label{U_conf_unit}
&U_1\overline{U}_1=U_2\overline{U}_2,
&U\overline{U}_1+\overline{U}U_2=0.
\end{align}

To see how the functions \eqref{U_on_B1} vary in a fiber over $x\in M$, we differentiate $\dd\eta^1$ and $\dd\eta^2$ and reduce modulo $\eta^0,\eta^1,\eta^2$.
\begin{align*}
 0&=\dd(\dd\eta^1)\\
 &\equiv 
 (\dd U_1-U_1(\beta_3-2\im\varrho)+2U\alpha^1+\epsilon U\zeta^{\overline{2}}_{\overline{1}})\wedge\eta^3\wedge\eta^{\overline{1}}\\
 &+(\epsilon\dd U-\epsilon U(\beta_3-\im\varrho-\im\varsigma)-U_1\alpha^{\overline{1}}+U_2\alpha^1+\epsilon U(\xi^2_2-\xi^1_1))\wedge\eta^3\wedge\eta^{\overline{2}}&&\mod\{\eta^0,\eta^1,\eta^2\},
\end{align*}
and similarly
\begin{align*}
0&=\dd(\dd\eta^2)\\
&\equiv
(\dd U- U(\beta_3-\im\varrho-\im\varsigma)-\epsilon U_1\alpha^{\overline{1}}+\epsilon U_2\alpha^1+U(\xi^1_1-\xi^2_2)-U_1\zeta^2_1+U_2\zeta^{\overline{2}}_{\overline{1}})\wedge\eta^3\wedge\eta^{\overline{1}}\\
&+(\dd U_2-U_2(\beta_3-2\im\varsigma)-2U\alpha^{\overline{1}}-\epsilon U\zeta^2_1)\wedge\eta^3\wedge\eta^{\overline{2}}
\hspace{3.62cm}\mod\{\eta^0,\eta^1,\eta^2\}.
\end{align*}
With \eqref{xismod1} in mind, we summarize
\begin{align}\label{dUs}
 \left.\begin{array}{lll}
  \dd U_1&\equiv& U_1(\beta_3-2\im\varrho)-2U\alpha^1  \\    
  \dd U&\equiv& U(\beta_3-\im\varrho-\im\varsigma)+\epsilon U_1\alpha^{\overline{1}}-\epsilon U_2\alpha^1 \\
  \dd U_2&\equiv& U_2(\beta_3-2\im\varsigma)+2U\alpha^{\overline{1}}\end{array}\right\}
  \mod\{\eta^0,\eta^1,\eta^2,\eta^3,\eta^{\overline{1}},\eta^{\overline{2}},\eta^{\overline{3}}\}.
\end{align}

Along with the relations \eqref{U_conf_unit}, the conformal unitary condition requires that the matrix $\left[\begin{array}{cr}U_1&\epsilon U\\U&U_2\end{array}\right]$ have full rank. In light of \eqref{U_conf_unit}, the square of the modulus of the determinant of this matrix is 
\begin{align*}
\left|U_1U_2-\epsilon U^2\right|^2&=(U_1U_2-\epsilon U^2)(\overline{U}_1\overline{U}_2-\epsilon \overline{U}^2)\\
&=\left| U_1\right|^4+2\epsilon\left(\left| U\right|\left| U_1\right|\right)^2+\left| U\right|^4\\
&=(\left| U_1\right|^2+\epsilon \left| U\right|^2)^2.
\end{align*} 
When $\epsilon=1$, the determinant is nonzero for any nontrivial matrix satisfying \eqref{U_conf_unit}. However, when $\epsilon=-1$, any matrix with $\left| U_1\right|=\left| U\right|$ is degenerate. The space of matrices satisfying \eqref{U_conf_unit} and having full rank is therefore disconnected when $\epsilon=-1$, and in particular the diagonal matrices ($U=0$, $\left| U_1\right|=\left| U_2\right|\neq0$) lie in a connected component distinct from that of the anti-diagonal matrices ($U_1=U_2=0$, $U\neq0$). We must distinguish between the following two subcases when $\epsilon=-1$:
\begin{align}\label{iso_represented}
&\left| U\right|>\left| U_1\right|=\left| U_2\right|,
&\left| U\right|<\left| U_1\right|=\left| U_2\right|.
\end{align}

\begin{lemma}\label{ad_K_normalization}
If $\epsilon=-1$ and $\left| U(\theta_{\mathbbm{1}}|_x)\right|>\left| U_1(\theta_{\mathbbm{1}}|_x)\right|=\left| U_2(\theta_{\mathbbm{1}}|_x)\right|$, there exists a coframe $\theta$ in the fiber of $B_1$ over $x$ such that 
\begin{align*}
&U(\theta)=1,
&&U_1(\theta)=U_2(\theta)=0.
\end{align*}
Alternatively, if $\epsilon=-1$ and $\left| U(\theta_{\mathbbm{1}}|_x)\right|<\left| U_1(\theta_{\mathbbm{1}}|_x)\right|=\left| U_2(\theta_{\mathbbm{1}}|_x)\right|$, there exists a coframe $\theta$ in the fiber of $B_1$ over $x$ such that 
\begin{align*}
&U(\theta)=0,
&&U_1(\theta)=U_2(\theta)=1.
\end{align*}
When $\epsilon=1$, both such coframes exist in the fiber over $x$.
\end{lemma}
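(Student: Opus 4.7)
The plan is to determine the action of $G_1$ on the triple $(U_1,U,U_2)$ in each fiber $B_1|_x$, identify a discrete invariant that distinguishes the three subcases, and then explicitly exhibit group elements realizing each claimed normal form. The action on the matrix $u=\left[\begin{smallmatrix}U_1&\epsilon U\\U&U_2\end{smallmatrix}\right]$ of $\ad_{X_3}$ can be read off either by directly pulling back \eqref{B0fibercoords} (subject to the $B_1$-constraints \eqref{G1}) and tracking how the matrix transforms under the coframe change, or, infinitesimally, by exponentiating the relations \eqref{dUs}. Either route yields that an element $(A,b_3)\in G_1$ acts on $u$ by a congruence-like transformation of the form $u\mapsto b_3^{-1}A\,u\,\bar A^{-1}$, with $A$ lying in a positive-scalar extension of the $\ell$-unitary group.

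Using the conformal-unitary identities \eqref{U_conf_unit}, I would then compute the pointwise invariant $u\bar u=(|U_1|^2+\epsilon|U|^2)\,I$: the diagonal entries are $|U_1|^2+\epsilon|U|^2$ and $|U_2|^2+\epsilon|U|^2$, which coincide by $|U_1|=|U_2|$, while the off-diagonal entries vanish by $U\bar U_1+\bar U U_2=0$ together with its conjugate. The $A$-action conjugates this scalar matrix (and hence leaves it unchanged), while the $b_3$-rescaling multiplies it by $|b_3|^{-2}>0$; consequently the sign of $|U_1|^2+\epsilon|U|^2$ is a $G_1$-invariant. For $\epsilon=1$ this quantity is always strictly positive and there is a single orbit, while for $\epsilon=-1$ the two subcases $|U|\lessgtr|U_1|$ from \eqref{iso_represented} yield two distinct orbits that must be reduced separately.

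A direct check shows $u=\ell u^t\ell$, so $\tilde u:=u\ell$ is complex symmetric in every case, and the group action on $\tilde u$ simplifies via $A\ell A^*=t\ell$ to $\tilde u\mapsto(b_3 t)^{-1}A\tilde u A^t$. When $\epsilon=1$, $\ell=I$ and $A$ ranges over a scalar extension of $U(2)$, so the Autonne--Takagi decomposition produces unitary $A$ with $A\tilde u A^t$ diagonal with the singular values of $\tilde u$ on the diagonal; the invariant forces these singular values to coincide, so this diagonal matrix is a positive multiple of $I$, and a rescaling by $b_3$ produces $u=I$, i.e.\ $U_1=U_2=1$, $U=0$. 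When $\epsilon=-1$, the analogous problem is to diagonalize (respectively anti-diagonalize) a complex symmetric matrix by a $U(1,1)$-congruence, which I would handle by constructing the requisite ``boost'' explicitly: in the $|U_1|>|U|$ subcase the boost brings $\tilde u$ to diagonal form, after which \eqref{U_conf_unit} forces equal moduli on the diagonal and $b_3$ rescales them both to $1$; in the $|U|>|U_1|$ subcase the boost instead brings $\tilde u$ to anti-diagonal form, after which $b_3$ rescales the common anti-diagonal entry to $1$.

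I expect the main technical obstacle to be precisely this indefinite-unitary Takagi-style reduction in the $\epsilon=-1$ subcases, since it is not furnished by an off-the-shelf theorem in the way Autonne--Takagi covers the definite case. I would write down the $U(1,1)$-boost parameters explicitly as functions of $U_1,U,U_2$ and verify, using \eqref{U_conf_unit}, that the requisite square roots and hyperbolic arguments are real precisely under the hypothesis $|U|<|U_1|$ (respectively $|U|>|U_1|$). This is exactly why the two subcases cannot be normalized simultaneously and why the isotropy dichotomy of Definition \ref{isotropy} falls out of the normalization procedure itself, rather than being imposed ad hoc.
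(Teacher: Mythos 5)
Your route is genuinely different from the paper's. The paper works entirely infinitesimally: it reads off from \eqref{dUs} how $U,U_1,U_2$ vary along the fundamental vector fields dual to $\varrho$, $\alpha^1$, and $\beta_3$, rescales those fields by explicit functions of $U$ (respectively $U_1$), and integrates the resulting linear ODEs to flow to the normal form; no matrix decomposition is invoked. Your orbit-theoretic reformulation is correct as far as it is carried out: the transformation rule $u\mapsto b_3^{-1}Au\overline{A}^{-1}$ with $A^*\ell A=t\ell$ does follow from \eqref{B0fibercoords} and \eqref{G1}, the identities \eqref{U_conf_unit} do give $u\overline{u}=(|U_1|^2+\epsilon|U|^2)\mathbbm{1}$, and the sign of that scalar is the $G_1$-invariant separating the two subcases \eqref{iso_represented} --- a cleaner account of the disconnectedness observation preceding the lemma. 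Two caveats: $t=|a^1_1|^2+\epsilon|a^2_1|^2$ need not be positive when $\epsilon=-1$, so ``positive-scalar extension'' is not quite right (harmless for existence, since one may restrict to $t>0$); and for $\epsilon=1$ your Takagi argument as written only produces the diagonal normal form $U_1=U_2=1$, $U=0$, whereas the lemma also asserts the anti-diagonal one exists --- you need the additional (easy) remark that $\left[\begin{smallmatrix}0&1\\1&0\end{smallmatrix}\right]$ is itself $WW^t$ for a unitary $W$, so both target matrices lie in the single orbit of scalar multiples of symmetric unitaries.

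The genuine gap is the $\epsilon=-1$ normalization itself. You correctly flag the indefinite Takagi-style reduction as the main technical obstacle and then do not perform it; but for $\epsilon=-1$ that reduction \emph{is} the lemma. Congruence classification of complex symmetric matrices under an indefinite unitary group is delicate (orbits are not determined by singular-value data, and degenerate canonical forms can occur), so the assertion that the constraint $\tilde{u}\,\ell\,\overline{\tilde{u}}=(|U_1|^2-|U|^2)\ell$ forces a single orbit in each connected component is precisely the content you must supply, not assume. The paper's flow argument buys exactly this step cheaply: after arranging $U=\overline{U}\neq0$ via the $\varrho$-flow, it rescales the fundamental vector fields dual to $\mathrm{Re}(\alpha^1),\mathrm{Im}(\alpha^1)$ by $-\tfrac{1}{2U}$ so that \eqref{dUs_restricted} integrates to straight lines $U_1(c_{\widetilde{X}}({\tt t}))=U_1(\theta_0)+{\tt t}$, and the hypothesis $|U|>|U_1|$ enters only to guarantee these lines reach $U_1=0$ before meeting the degenerate locus $|U_1|=|U|$. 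To complete your version you would need to exhibit the $U(1,1)$-boost explicitly and prove transitivity on the constraint locus in each component; as it stands the proof is incomplete exactly where the lemma has content.
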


\begin{proof}
It is immediate that $\left| U(\theta_{\mathbbm{1}}|_x)\right|>0$ in the first subcase of $\epsilon=-1$. In order to treat this and the case $\epsilon=1$ simultaneously, we first show that we may assume $\left| U(\theta_{\mathbbm{1}}|_x)\right|>0$. Suppose to the contrary that $U(\theta_{\mathbbm{1}}|_x)=0$. Let $X_\alpha,Y_\alpha\in\Gamma(TB_1)$ be the vertical vector fields dual to the real and imaginary parts of $\alpha^1$; i.e., 
\begin{align*}
&\text{Re}(\alpha^1)(X_\alpha)=\text{Im}(\alpha^1)(Y_\alpha)=1,
&\text{Re}(\alpha^1)(Y_\alpha)=\text{Im}(\alpha^1)(X_\alpha)=0,
\end{align*}
while every other pseudoconnection form (along with the tautological forms) annihilates both $X_\alpha$ and $Y_\alpha$. The fiber of $B_1$ over $x$ is foliated by integral curves of these \emph{fundamental vector fields}, and we name the curves $c_X({\tt t}),c_Y({\tt t}):\mathbb{R}\to B_1$ which pass through $\theta_{\mathbbm{1}}$ when ${\tt t}=0$. By \eqref{dUs}, we calculate
\begin{align*}
 \left.\frac{d}{d{\tt t}}\right|_{{\tt t}=0}U(c_X({\tt t}))&=\dd U\left(\left.\frac{d}{d{\tt t}}\right|_{{\tt t}=0}c_X({\tt t})\right)\\
 &=(U(\beta_3-\im\varrho-\im\varsigma)+\epsilon U_1\alpha^{\overline{1}}-\epsilon U_2\alpha^1)\left(\left.X_\alpha\right|_{c_X(0)}\right)\\
 &=\epsilon(U_1-U_2)(\theta_{\mathbbm{1}}),
\end{align*}
and similarly,
\begin{align*}
 \left.\frac{d}{d{\tt t}}\right|_{{\tt t}=0}U(c_Y({\tt t}))&=(U(\beta_3-\im\varrho-\im\varsigma)+\epsilon U_1\alpha^{\overline{1}}-\epsilon U_2\alpha^1)\left(\left.Y_\alpha\right|_{c_Y(0)}\right)\\
 &=-\epsilon\im(U_1+U_2)(\theta_{\mathbbm{1}}).
\end{align*}
Since we have assumed $U(\theta_{\mathbbm{1}}|_x)=0$, one of these derivatives must be nonzero, hence $U$ is not identically zero in the fiber over $x$. If necessary, we could \emph{flow along the curve} $c_X({\tt t})$ or $c_Y({\tt t})$ by choosing a value of ${\tt t}$ such that $U(c_X({\tt t}))$ or $U(c_Y({\tt t}))$ is nonzero, thereby choosing another $\theta_{\mathbbm{1}}$. We proceed with the assumption $\left| U(\theta_{\mathbbm{1}})\right|>0$. 

Next we demonstrate how to find a coframe $\theta_0$ such that $U(\theta_0)$ is $\mathbb{R}$-valued and positive. To this end, let $X_\varrho\in\Gamma(TB_1)$ be the fundamental vector field dual to $\varrho$, and $c_\varrho({\tt t})$ the integral curve of $X_\varrho$ such that $c_\varrho(0)=\theta_{\mathbbm{1}}$. If we let $\e$ denote the natural exponential, we calculate
\begin{align*}
 \frac{d}{d{\tt t}}U(c_\varrho({\tt t}))
 &=(U(\beta_3-\im\varrho-\im\varsigma)+\epsilon U_1\alpha^{\overline{1}}-\epsilon U_2\alpha^1)\left(X_\varrho\right)\\
 &=-\im U(c_\varrho({\tt t}))\\
\Rightarrow U(c_\varrho({\tt t}))&=U(\theta_{\mathbbm{1}})\e^{-\im{\tt t}}, 
\end{align*}
so for some ${\tt t}_0$ and $\theta_0=c_\varrho({\tt t}_0)$ we can indeed ensure $U(\theta_0)$ is real and positive. The other two equations of \eqref{dUs} will likewise show 
\begin{align*}
&U_1(c_\varrho({\tt t}))=U_1(\theta_{\mathbbm{1}})\e^{-2\im{\tt t}},
&U_2(c_\varrho({\tt t}))=U_2(\theta_{\mathbbm{1}}),
\end{align*}
so flowing along the curve $c_\varrho({\tt t})$ does not change the modulus of any of $U,U_1,U_2$, and we remain in the first subcase for $\epsilon=-1$. The second equation of \eqref{dUs} will reveal how the imaginary part of $U$ varies in each fiber,
\begin{equation}\label{d_U_im}
\begin{aligned}
\dd U-\dd\overline{U}&\equiv \im(U+\overline{U})\left(\text{Im}(\beta_3)-\varrho-\varsigma\right)+\im(U-\overline{U})\text{Re}(\beta_3)+\epsilon (U_1+\overline{U}_2)\alpha^{\overline{1}}-\epsilon (U_2+\overline{U}_1)\alpha^1\\
&\mod\{\eta^0,\eta^1,\eta^2,\eta^3,\eta^{\overline{1}},\eta^{\overline{2}},\eta^{\overline{3}}\}.
\end{aligned}
\end{equation}
Let $B_{1.5}\subset B_1$ be the subbundle defined by the level set $U-\overline{U}=0$ and let $B_{1.5}^\circ\subset B_{1.5}$ be the open neighborhood of $\theta_0$ where $U=\overline{U}\neq 0$ (which is all of $B_{1.5}$ for the first subcase of $\epsilon=-1$). We keep the same names for the tautological and connection forms when pulled back to $B_{1.5}$, though they are not all independent on $B_{1.5}^\circ$. The conformal unitary condition \eqref{U_conf_unit} implies $U_1+\overline{U}_2=0$ on $B_{1.5}^\circ$, so the vanishing of the left-hand-side of \eqref{d_U_im} shows
\begin{align*}
\im\text{Im}(\beta_3)\equiv \im\varrho+\im\varsigma\mod\{\eta^0,\eta^1,\eta^2,\eta^3,\eta^{\overline{1}},\eta^{\overline{2}},\eta^{\overline{3}}\},
\end{align*} 
but aside from this equivalence the pseudoconnection forms remain independent. When pulled back to $B_{1.5}^\circ$ we may therefore write the equations \eqref{dUs} 
\begin{align}\label{dUs_restricted}
 \left.\begin{array}{lll}
  \dd U_1&\equiv& U_1(\text{Re}(\beta_3)+\im\varsigma-\im\varrho)-2U\alpha^1  \\    
  \dd U&\equiv& U\text{Re}(\beta_3)+\epsilon U_1\alpha^{\overline{1}}+\epsilon \overline{U}_1\alpha^1\end{array}\right\}
  \mod\{\eta^0,\eta^1,\eta^2,\eta^3,\eta^{\overline{1}},\eta^{\overline{2}},\eta^{\overline{3}}\}.
\end{align}

Recycling the names $X_\alpha,Y_\alpha\in\Gamma(TB_{1.5})$ for the fundamental vector fields dual to $\text{Re}(\alpha^1)$ and $\text{Im}(\alpha^1)$, respectively, we define on $B_{1.5}^\circ$
\begin{align*}
&\widetilde{X}=-\frac{1}{2U}X_\alpha,
&\widetilde{Y}=-\frac{1}{2U}Y_\alpha,
\end{align*}
and note that the integral curves of $X_\alpha,Y_\alpha$ are also tangent to $\widetilde{X},\widetilde{Y}$ (respectively) on $B_{1.5}^\circ$. We name $c_{\widetilde{X}},c_{\widetilde{Y}}:\mathbb{R}\to B_{1.5}^\circ$ the integral curves of $\widetilde{X},\widetilde{Y}$ that satisfy $c_{\widetilde{X}}(0)=c_{\widetilde{Y}}(0)=\theta_0$. Using the first equation of \eqref{dUs_restricted}, we calculate
\begin{align*}
&\frac{d}{d{\tt t}}U_1(c_{\widetilde{X}}({\tt t}))=1,
&\frac{d}{d{\tt t}}U_2(c_{\widetilde{X}}({\tt t}))=\im,\\
\Rightarrow &U_1(c_{\widetilde{X}}({\tt t}))=U_1(\theta_0)+{\tt t},
&U_1(c_{\widetilde{Y}}({\tt t}))=U_1(\theta_0)+\im{\tt t}.
\end{align*}

Thus we see that in the case $\epsilon=1$ or the first subcase \eqref{iso_represented} of $\epsilon=-1$, we can flow along the integral curves of $\widetilde{X},\widetilde{Y}$ to a coframe where $U_1=U_2=0$. Our assumption that $\left| U\right|>\left| U_1\right|=\left| U_2\right|$ for the first subcase of $\epsilon=-1$ is tacitly used when we flow along $c_{\widetilde{X}}({\tt t})$ and $c_{\widetilde{Y}}({\tt t})$ to reduce $U_1$ and $U_2$ to zero. If $\left| U_1(\theta_0)\right|>\left| U(\theta_0)\right|$, then these integral curves would have to pass through a coframe where $\left| U_1\right|=\left| U\right|$, and we saw that this corresponds to a degenerate matrix when $\epsilon=-1$. Having established the existence of coframes in the fiber of $B_1$ with $U_1=U_2=0$, the first part of the lemma is resolved by flowing along integral curves dual to $\text{Re}(\beta_3)$ and $\text{Im}(\beta_3)$ to a coframe where $U=1$.

The alternative subcase when $\epsilon=-1$ and $\left| U(\theta_{\mathbbm{1}}|_x)\right|<\left| U_1(\theta_{\mathbbm{1}}|_x)\right|=\left| U_2(\theta_{\mathbbm{1}}|_x)\right|$ may be handled in similar fashion, so we merely indicate the steps involved. By hypothesis, $\left| U_1(\theta_{\mathbbm{1}}|_x)\right|=\left| U_2(\theta_{\mathbbm{1}}|_x)\right|>0$. Flowing along curves tangent to the fundamental vector fields that are dual to $\varrho$ and $\varsigma$ will lead to a coframe where $U_1$, $U_2$ are real and positive, hence equal by \eqref{U_conf_unit}. On the level set where the imaginary parts of $U_1=U_2$ are identically zero, $U$ is imaginary by \eqref{U_conf_unit} and the constraints on the pseudoconnection forms leave the imaginary part of $\alpha^1$ independent. Rescaling the fundamental vector field which is dual to $\text{Im}(\alpha^1)$ by $\tfrac{1}{U_1}$ will yield an integral curve that flows to a coframe where $U=0$. 

We conclude by noting that the normalizations for the latter subcase of $\epsilon=-1$ may also be applied when $\epsilon=1$. One simply needs to begin at a coframe where $U_1\neq0$, and the existence of such coframes follows by the same argument with which the proof began.
\end{proof}

Lemma \ref{ad_K_normalization} shows that the two $\epsilon=-1$ subcases \eqref{iso_represented} correspond to the isotropy-switching and isotropy-preserving scenarios identified in Definition \ref{isotropy}. We will normalize so that we can treat the former simultaneously with the $\epsilon=1$ case, leaving the isotropy-preserving subcase for a future article. Therefore, let us restrict to the level set 
\begin{align*}
 &U=1,&U_1=U_2=0,
\end{align*}
which defines a subbundle $\iota_2:B_2\hookrightarrow B_1$ of \emph{2-adapted coframes}. Sections of $B_2$ are local 1-adapted coframings $\theta$ as in \eqref{MSE1}, but which additionally satisfy 
\begin{align}\label{MSE2}
 \dd\left[\begin{array}{c}\theta^0\\\theta^1\\\theta^2\\\theta^3\end{array}\right]=
 \left[\begin{array}{c}
\im\theta^1\wedge\theta^{\overline{1}}+\epsilon\im\theta^2\wedge\theta^{\overline{2}}\\
\epsilon\theta^3\wedge\theta^{\overline{2}}\\
\theta^3\wedge\theta^{\overline{1}}\\
0\end{array}\right] \mod\left\{\begin{array}{c}
\theta^0\\\theta^0,\theta^1,\theta^2\\\theta^0,\theta^1,\theta^2\\\theta^0,\theta^1,\theta^2,\theta^3\end{array}\right\}.
\end{align}
Among such \emph{2-adapted coframings} we fix a new $\theta_{\mathbbm{1}}$ in order to locally trivialize $B_2$. We saw that $B_1$ was locally trivialized $B_1\cong G_1\times M$ by \eqref{B0fibercoords}, where the subgroup $G_1\subset G_0$ was defined by the added conditions \eqref{G1}. Now one calculates that a matrix in $G_1$ applied to the new $\theta_{\mathbbm{1}}$ will preserve our latest normalization if and only if we additionally have 
\begin{align*}
 &a^1_1=b_3a^{\overline{2}}_{\overline{2}}, &a^1_2=\epsilon b_3a^{\overline{2}}_{\overline{1}}, &&a^2_2=b_3a^{\overline{1}}_{\overline{1}}, &&\epsilon a^2_1=b_3a^{\overline{1}}_{\overline{2}}.
\end{align*}
Since the diagonal terms in the matrices are nonvanishing, these relations imply $a^1_2=a^2_1=0$, while $b_3\in\mathbb{C}$ is unimodular. Let $G_2\subset G_1$ denote this reduced group of matrices, which is the structure group of $B_2$. If we let $\e$ denote the natural exponential, then we may parameterize $G_2$ by  
\begin{align}\label{G2}
\left[\begin{array}{cccc}t^2&0&0&0\\c^1& t\e^{\im r}&0&0\\c^2&0&t\e^{\im s}&0\\c^3&b_1&b_2&\e^{\im(r+s)}\end{array}\right];\hspace{1cm} r,s,0\neq t\in\mathbb{R}; c^j,b_k\in\mathbb{C}.
\end{align}

By \eqref{dUs}, we see that when restricted to $B_2$, we have
\begin{align}\label{redtormod2}
 &\beta_3\equiv \im\varrho+\im\varsigma, &\alpha^1\equiv 0 &&\mod\{\eta^0,\eta^1,\eta^2,\eta^3,\eta^{\overline{1}},\eta^{\overline{2}},\eta^{\overline{3}}\}.
\end{align}
Pulling back our coframing along the inclusion $\iota_2$, we rename accordingly. First, some familiar names
\begin{align*}
 \left[\begin{array}{c}\eta\\\tau\\\varrho\\\varsigma\\\gamma\\\beta_1\\\beta_2\end{array}\right]:=
 \iota_2^*\left[\begin{array}{c}\eta\\\tau\\\varrho\\\varsigma\\\gamma\\\beta_1\\\beta_2\end{array}\right].
\end{align*}
The only new forms we must define are semibasic by \eqref{redtormod2}, viz,
\begin{align*}
 \left[\begin{array}{c}\xi^1_2\\\xi^3_3\end{array}\right]:=
 \iota_2^*\left[\begin{array}{c}-\alpha^{1}\\-\beta_3+\im\varrho+\im\varsigma\end{array}\right].
\end{align*}
We will also preserve the names of the unknown apparent torsion forms on $B_1$, except to combine terms where appropriate:
\begin{align*}
 \left[\begin{array}{c}\xi^1_1\\\xi^2_2\\\xi^2_1\end{array}\right]:=
 \iota_2^*\left[\begin{array}{c}\xi^1_1\\\xi^2_2\\\zeta^2_1+\epsilon\alpha^{\overline{1}}\end{array}\right].
\end{align*}
Pulling back \eqref{B1SE} along $\iota_2$ yields new structure equations on $B_2$:
\begin{align}\label{B2SE}
 \dd \left[\begin{array}{c}\eta^0\\\eta^1\\\eta^2\\\eta^3\end{array}\right]=
 -\left[\begin{array}{cccc}2\tau&0&0&0\\\gamma^1&\tau+\im\varrho&0&0\\\gamma^2&0&\tau+\im\varsigma&0\\\gamma^3&\im\gamma^2-\beta_1&\im\gamma^1-\beta_2&\im\varrho+\im\varsigma\end{array}\right]\wedge
 \left[\begin{array}{c}\eta^0\\\eta^1\\\eta^2\\\eta^3\end{array}\right]+
 \left[\begin{array}{c}
 \im\eta^1\wedge\eta^{\overline{1}}+\epsilon\im\eta^2\wedge\eta^{\overline{2}}\\\epsilon\eta^3\wedge\eta^{\overline{2}}+\xi^1_1\wedge\eta^1+\xi^1_2\wedge\eta^2 \\ \eta^3\wedge\eta^{\overline{1}}+\xi^2_1\wedge\eta^1+\xi^2_2\wedge\eta^2 \\\xi^3_3\wedge\eta^3\end{array}\right],
\end{align}
where $\xi^1_1,\xi^2_2$ are still $\mathbb{R}$-valued, and by \eqref{xismod1},\eqref{redtormod2}, we can say 
\begin{align}\label{xismod2}
 \xi^1_1,\xi^1_2,\xi^2_1,\xi^2_2,\xi^3_3\equiv0\mod\{\eta^0,\eta^1,\eta^2,\eta^3,\eta^{\overline{1}},\eta^{\overline{2}},\eta^{\overline{3}}\}.
\end{align}

\vspace{\baselineskip}\subsection{Absorption}\label{absorption}

This section is devoted to absorbing as much as we can of the apparent torsion from the $\xi$'s in \eqref{B2SE}. It is easy to see that we can absorb any $\eta^0$ components of these forms into the $\gamma$'s (using the $\beta$'s to correct the equation for $\dd\eta^3$ if necessary). As such, we suppress these components when we adduce \eqref{xismod2} to expand $\xi^i_j=f^i_{jk}\eta^k+t^i_{j\overline{k}}\eta^{\overline{k}}$:

\begin{equation*}\begin{aligned}
 \xi^1_1&=f^1_{11}\eta^1+f^1_{12}\eta^2+f^1_{13}\eta^3+\overline{f}^1_{11}\eta^{\overline{1}}+\overline{f}^1_{12}\eta^{\overline{2}}+\overline{f}^1_{13}\eta^{\overline{3}},  \\
 \xi^2_2&=f^2_{21}\eta^1+f^2_{22}\eta^2+f^2_{23}\eta^3+\overline{f}^2_{21}\eta^{\overline{1}}+\overline{f}^2_{22}\eta^{\overline{2}}+\overline{f}^2_{23}\eta^{\overline{3}},  \\
 \xi^1_2&=f^1_{21}\eta^1+f^1_{22}\eta^2+f^1_{23}\eta^3+t^1_{2\overline{1}}\eta^{\overline{1}}+t^1_{2\overline{2}}\eta^{\overline{2}}+t^1_{2\overline{3}}\eta^{\overline{3}}, \\
 \xi^2_1&=f^2_{11}\eta^1+f^2_{12}\eta^2+f^2_{13}\eta^3+t^2_{1\overline{1}}\eta^{\overline{1}}+t^2_{1\overline{2}}\eta^{\overline{2}}+t^2_{1\overline{3}}\eta^{\overline{3}},  \\
 \xi^3_3&=f^3_{31}\eta^1+f^3_{32}\eta^2+f^3_{33}\eta^3+t^3_{3\overline{1}}\eta^{\overline{1}}+t^3_{3\overline{2}}\eta^{\overline{2}}+t^3_{3\overline{3}}\eta^{\overline{3}},  
\end{aligned}\end{equation*} 
for some functions $f,t\in C^\infty(B_2,\mathbb{C})$. Because $\xi^1_1$ and $\xi^2_2$ are $\mathbb{R}$-valued, $t^j_{j\overline{k}}=\overline{f}^j_{jk}$ for $j=1,2$. Though these coefficients are unknown, we discover relationships between them by differentiating the structure equations. First differentiate $\im\dd\eta^0$ and reduce modulo $\eta^0$.

\begin{align*}
0&=\dd(\im\dd\eta^0)\\
&\equiv-2\xi^1_1\wedge\eta^1\wedge\eta^{\overline{1}}-(\xi^1_2+\epsilon\xi^{\overline{2}}_{\overline{1}})\wedge\eta^2\wedge\eta^{\overline{1}}-(\epsilon\xi^2_1+\xi^{\overline{1}}_{\overline{2}})\wedge\eta^1\wedge\eta^{\overline{2}}-2\epsilon\xi^2_2\wedge\eta^2\wedge\eta^{\overline{2}}\\
&\equiv(2f^1_{12}-f^1_{21}-\epsilon\overline{t}^2_{1\overline{1}})\eta^2\wedge\eta^1\wedge\eta^{\overline{1}}+(2\overline{f}^1_{12}-\overline{f}^1_{21}-\epsilon t^2_{1\overline{1}})\eta^{\overline{2}}\wedge\eta^1\wedge\eta^{\overline{1}}+2f^1_{13}\eta^3\wedge\eta^1\wedge\eta^{\overline{1}}\\
&+(2\epsilon f^2_{21}-\epsilon f^2_{12}-\overline{t}^1_{2\overline{2}})\eta^1\wedge\eta^2\wedge\eta^{\overline{2}}+(2\epsilon\overline{f}^2_{21}-\epsilon\overline{f}^2_{12}-t^1_{2\overline{2}})\eta^{\overline{1}}\wedge\eta^2\wedge\eta^{\overline{2}}+2\epsilon f^2_{23}\eta^3\wedge\eta^2\wedge\eta^{\overline{2}}\\
&+(f^1_{23}+\epsilon\overline{t}^2_{1\overline{3}})\eta^3\wedge\eta^2\wedge\eta^{\overline{1}}+(\overline{f}^1_{23}+\epsilon t^2_{1\overline{3}})\eta^{\overline{3}}\wedge\eta^1\wedge\eta^{\overline{2}}+2\overline{f}^1_{13}\eta^{\overline{3}}\wedge\eta^1\wedge\eta^{\overline{1}}\\
&+(\epsilon f^2_{13}+\overline{t}^1_{2\overline{3}})\eta^3\wedge\eta^1\wedge\eta^{\overline{2}}+(\epsilon\overline{f}^2_{13}+t^1_{2\overline{3}})\eta^{\overline{3}}\wedge\eta^2\wedge\eta^{\overline{1}}+2\epsilon\overline{f}^2_{23}\eta^{\overline{3}}\wedge\eta^2\wedge\eta^{\overline{2}} &&\mod\{\eta^0\}.
\end{align*}
Coefficients of independent three-forms vanish independently, so this has revealed six distinct vanishing conditions and their complex conjugates. For example, we now know that $f^1_{13}=f^2_{23}=0$. We will see that these six equations allow us to simplify our apparent torsion tensor via absorption, but first we find five more equations by differentiating $\dd\eta^1$ and $\dd\eta^2$ and reducing modulo $\eta^0,\eta^1,\eta^2$.
\begin{align*}
0&=\dd(\dd\eta^1)\\
&\equiv\epsilon(\xi^3_3+\xi^2_2-\xi^1_1)\wedge\eta^3\wedge\eta^{\overline{2}}+(\epsilon\xi^{\overline{2}}_{\overline{1}}-\xi^1_2)\wedge\eta^3\wedge\eta^{\overline{1}}\\
&\equiv (\epsilon\overline{f}^2_{21}-\epsilon\overline{f}^2_{12}-\epsilon\overline{f}^1_{11}+\epsilon t^3_{3\overline{1}}+t^1_{2\overline{2}})\eta^{\overline{1}}\wedge\eta^3\wedge\eta^{\overline{2}}+\epsilon t^3_{3\overline{3}}\eta^{\overline{3}}\wedge\eta^3\wedge\eta^{\overline{2}}+(\epsilon\overline{f}^2_{13}-t^1_{2\overline{3}})\eta^{\overline{3}}\wedge\eta^3\wedge\eta^{\overline{1}}\\&\mod\{\eta^0,\eta^1,\eta^2\},
\end{align*}
and similarly,
\begin{align*}
0&=\dd(\dd\eta^2)\\
&\equiv(\xi^3_3+\xi^1_1-\xi^2_2)\wedge\eta^3\wedge\eta^{\overline{1}}+(\xi^{\overline{1}}_{\overline{2}}-\epsilon\xi^2_1)\wedge\eta^3\wedge\eta^{\overline{2}}\\
&\equiv (\overline{f}^1_{12}-\overline{f}^2_{22}-\overline{f}^1_{21}+t^3_{3\overline{2}}+\epsilon t^2_{1\overline{1}})\eta^{\overline{2}}\wedge\eta^3\wedge\eta^{\overline{1}}+t^3_{3\overline{3}}\eta^{\overline{3}}\wedge\eta^3\wedge\eta^{\overline{1}}+(\overline{f}^1_{23}-\epsilon t^2_{1\overline{3}})\eta^{\overline{3}}\wedge\eta^3\wedge\eta^{\overline{2}}\\&\mod\{\eta^0,\eta^1,\eta^2\}.
\end{align*}
In addition to concluding that 
\begin{align*}
f^1_{13}=f^2_{23}=t^3_{3\overline{3}}=0, 
\end{align*}
we have eight vanishing conditions. The first four 
\begin{align*}
 0&=f^1_{23}+\epsilon\overline{t}^2_{1\overline{3}}, \\
  0&=\overline{f}^1_{23}-\epsilon t^2_{1\overline{3}}, \\
  0&=\epsilon f^2_{13}+\overline{t}^1_{2\overline{3}}, \\
  0&=\epsilon\overline{f}^2_{13}-t^1_{2\overline{3}},
\end{align*}
imply 
\begin{align*}
f^1_{23}=f^2_{13}=\overline{t}^1_{2\overline{3}}=\overline{t}^2_{1\overline{3}}=0, 
\end{align*}
while the latter four
\begin{equation}\begin{aligned}\label{torvan}
  0&=2\epsilon f^2_{21}-\epsilon f^2_{12}-\overline{t}^1_{2\overline{2}}, \\
  0&=2f^1_{12}-f^1_{21}-\epsilon\overline{t}^2_{1\overline{1}}, \\
  0&=\epsilon\overline{f}^2_{21}-\epsilon\overline{f}^2_{12}-\epsilon\overline{f}^1_{11}+\epsilon t^3_{3\overline{1}}+t^1_{2\overline{2}}, \\
  0&=\overline{f}^1_{12}-\overline{f}^2_{22}-\overline{f}^1_{21}+t^3_{3\overline{2}}+\epsilon t^2_{1\overline{1}},
\end{aligned}\end{equation}
will be useful for absorbing the remaining terms. The structure equations \eqref{B2SE} may now be expanded to read
\begin{equation}\label{B2SExpand}
\begin{aligned}
 \dd \left[\begin{array}{c}\eta^0\\\eta^1\\\eta^2\\\eta^3\end{array}\right]&=
 -\left[\begin{array}{cccc}2\tau&0&0&0\\\gamma^1&\tau+\im\varrho&0&0\\\gamma^2&0&\tau+\im\varsigma&0\\\gamma^3&\im\gamma^2-\beta_1&\im\gamma^1-\beta_2&\im\varrho+\im\varsigma\end{array}\right]\wedge
 \left[\begin{array}{c}\eta^0\\\eta^1\\\eta^2\\\eta^3\end{array}\right]\\
&\\
 &+
 \left[\begin{array}{c}
 \im\eta^1\wedge\eta^{\overline{1}}+\epsilon\im\eta^2\wedge\eta^{\overline{2}}\\
 \epsilon\eta^3\wedge\eta^{\overline{2}}+(f^1_{12}\eta^2+\overline{f}^1_{11}\eta^{\overline{1}}+\overline{f}^1_{12}\eta^{\overline{2}})\wedge\eta^1+(f^1_{21}\eta^1+t^1_{2\overline{1}}\eta^{\overline{1}}+t^1_{2\overline{2}}\eta^{\overline{2}})\wedge\eta^2 \\
 \eta^3\wedge\eta^{\overline{1}}+(f^2_{12}\eta^2+t^2_{1\overline{1}}\eta^{\overline{1}}+t^2_{1\overline{2}}\eta^{\overline{2}})\wedge\eta^1+(f^2_{21}\eta^1+\overline{f}^2_{21}\eta^{\overline{1}}+\overline{f}^2_{22}\eta^{\overline{2}})\wedge\eta^2 \\
 (f^3_{31}\eta^1+f^3_{32}\eta^2+t^3_{3\overline{1}}\eta^{\overline{1}}+t^3_{3\overline{2}}\eta^{\overline{2}})\wedge\eta^3\end{array}\right].
\end{aligned}
\end{equation}

 We will simplify notation by focusing only on those two-forms which are involved in each step of the absorption. For example, in the structure equation for $\dd\eta^3$, we have
\begin{align*}
 \dd\eta^3&=\beta_1\wedge\eta^1+\beta_2\wedge\eta^2+f^3_{31}\eta^1\wedge\eta^3+f^3_{32}\eta^2\wedge\eta^3+\dots\\
 &=(\beta_1-f^3_{31}\eta^3)\wedge\eta^1+(\beta_2-f^3_{32}\eta^3)\wedge\eta^2+\dots 
\end{align*}
so we let $\hat{\beta}_1=\beta_1-f^3_{31}\eta^3$ and $\hat{\beta}_2=\beta_2-f^3_{32}\eta^3$ to absorb these terms. Now that they are gone, we drop the hats off of $\beta_1,\beta_2$, as we will need to modify them again when considering other terms. Many of the remaining aborbable terms will be absorbed into the diagonal pseudoconnection forms $\im\varrho$ and $\im\varsigma$. Note that we can only alter them by purely imaginary, semibasic one-forms. Before proceeding, we state that the result of our absorption will be that the apparent torsion tensor in \eqref{B2SExpand} will be changed to   
 \begin{align}\label{intorsion}
 \left[\begin{array}{c}
 \im\eta^1\wedge\eta^{\overline{1}}+\epsilon\im\eta^2\wedge\eta^{\overline{2}}\\
 \epsilon\eta^3\wedge\eta^{\overline{2}}+\epsilon(t^1_{2\overline{2}}\eta^{\overline{1}}+t^2_{1\overline{1}}\eta^{\overline{2}})\wedge\eta^1+(t^1_{2\overline{1}}\eta^{\overline{1}}+t^1_{2\overline{2}}\eta^{\overline{2}})\wedge\eta^2 \\
 \eta^3\wedge\eta^{\overline{1}}+(t^2_{1\overline{1}}\eta^{\overline{1}}+t^2_{1\overline{2}}\eta^{\overline{2}})\wedge\eta^1+\epsilon(t^1_{2\overline{2}}\eta^{\overline{1}}+t^2_{1\overline{1}}\eta^{\overline{2}})\wedge\eta^2 \\
 0\end{array}\right].
\end{align}
 
 We will arrive at \eqref{intorsion} in two steps -- one for each of the apparent torsion coefficients $t^3_{3\overline{1}}$ and $t^3_{3\overline{2}}$ that currently remain in the equation for $\dd\eta^3$ in \eqref{B2SExpand}. First consider
\begin{align*}
 \dd\eta^3&=\beta_1\wedge\eta^1-(\im\varrho+\im\varsigma)\wedge\eta^3+t^3_{3\overline{1}}\eta^{\overline{1}}\wedge\eta^3+\dots \\
 &=(\beta_1-\overline{t}^3_{3\overline{1}}\eta^{3})\wedge\eta^1-(\im\varrho+\im\varsigma-t^3_{3\overline{1}}\eta^{\overline{1}}+\overline{t}^3_{3\overline{1}}\eta^{1})\wedge\eta^3+\dots 
\end{align*}
Let $\hat{\beta}_1:=\beta_1-\overline{t}^3_{3\overline{1}}\eta^{3}$. Note that if we choose any imaginary form $\zeta\in\Omega^1(B_2,\im\mathbb{R})$, and define  
\begin{align}\label{iris1}
 &\im\hat{\varrho}:=\im\varrho-\tfrac{1}{2}(t^3_{3\overline{1}}\eta^{\overline{1}}-\overline{t}^3_{3\overline{1}}\eta^{1})+\zeta,
 &\im\hat{\varsigma}:=\im\varsigma-\tfrac{1}{2}(t^3_{3\overline{1}}\eta^{\overline{1}}-\overline{t}^3_{3\overline{1}}\eta^{1})-\zeta,
\end{align}
then we have successfully absorbed the $t^3_{3\overline{1}}$ term in the expression for $\dd\eta^3$. We will choose $\zeta$ so that we also absorb terms in the expressions for $\dd\eta^1,\dd\eta^2$. Let
\begin{align*}
 \zeta:&=-\tfrac{1}{2}\left(\overline{f}^1_{11}-\overline{f}^2_{12}+\overline{f}^2_{21}-\epsilon t^1_{2\overline{2}}\right)\eta^{\overline{1}}+\tfrac{1}{2}\left(f^1_{11}-f^2_{12}+f^2_{21}-\epsilon\overline{t}^1_{2\overline{2}}\right)\eta^1.
\end{align*}
By the third equation in \eqref{torvan},
\begin{align*}
 t^3_{3\overline{1}}\eta^{\overline{1}}-\overline{t}^3_{3\overline{1}}\eta^{1}&=\left(-\overline{f}^2_{21}+\overline{f}^2_{12}+\overline{f}^1_{11}-\epsilon t^1_{2\overline{2}}\right)\eta^{\overline{1}}-\left(-f^2_{21}+f^2_{12}+f^1_{11}-\epsilon\overline{t}^1_{2\overline{2}}\right)\eta^1,
\end{align*}
so in \eqref{iris1} we have 
\begin{align}
 &\im\hat{\varrho}=\im\varrho-\overline{f}^1_{11}\eta^{\overline{1}}+\epsilon t^1_{2\overline{2}}\eta^{\overline{1}}+f^1_{11}\eta^1-\epsilon\overline{t}^1_{2\overline{2}}\eta^1,\label{ir1}\\
 &\im\hat{\varsigma}=\im\varsigma+\overline{f}^2_{21}\eta^{\overline{1}}-\overline{f}^2_{12}\eta^{\overline{1}}-f^2_{21}\eta^1+f^2_{12}\eta^1.\label{is1}
\end{align}
Now \eqref{ir1} shows
\begin{align*}
 \dd\eta^1&=-\im\varrho\wedge\eta^1+\overline{f}^1_{11}\eta^{\overline{1}}\wedge\eta^1+\dots\\
 &=-(\im\varrho-\overline{f}^1_{11}\eta^{\overline{1}}+\epsilon t^1_{2\overline{2}}\eta^{\overline{1}}+f^1_{11}\eta^1-\epsilon\overline{t}^1_{2\overline{2}}\eta^1)\wedge\eta^1+\epsilon t^1_{2\overline{2}}\eta^{\overline{1}}\wedge\eta^1+\dots\\
 &=-\im\hat{\varrho}\wedge\eta^1+\epsilon t^1_{2\overline{2}}\eta^{\overline{1}}\wedge\eta^1+\dots
\end{align*}
On the other hand, by the first equation in \eqref{torvan} we can write \eqref{is1} as 
\begin{align*}
 \im\hat{\varsigma}&=\im\varsigma-\overline{f}^2_{21}\eta^{\overline{1}}+(2\overline{f}^2_{21}-\overline{f}^2_{12})\eta^{\overline{1}}-f^2_{21}\eta^1+f^2_{12}\eta^1\\
 &=\im\varsigma-\overline{f}^2_{21}\eta^{\overline{1}}+\epsilon t^1_{2\overline{2}}\eta^{\overline{1}}-f^2_{21}\eta^1+f^2_{12}\eta^1,
\end{align*}
 which shows
\begin{align*}
 \dd\eta^2&=-\im\varsigma\wedge\eta^2+f^2_{12}\eta^2\wedge\eta^1+f^2_{21}\eta^1\wedge\eta^2+\overline{f}^2_{21}\eta^{\overline{1}}\wedge\eta^2+\dots\\
 &=-(\im\varsigma-\overline{f}^2_{21}\eta^{\overline{1}}+\epsilon t^1_{2\overline{2}}\eta^{\overline{1}}-f^2_{21}\eta^1+f^2_{12}\eta^1)\wedge\eta^2+\epsilon t^1_{2\overline{2}}\eta^{\overline{1}}\wedge\eta^2+\dots\\
 &=-\im\hat{\varsigma}\wedge\eta^2+\epsilon t^1_{2\overline{2}}\eta^{\overline{1}}\wedge\eta^2+\dots
\end{align*}
This concludes the first step of the absorption, by which we modified \eqref{B2SExpand} to yield  
 \begin{align*}
 \dd \left[\begin{array}{c}\eta^0\\\eta^1\\\eta^2\\\eta^3\end{array}\right]&=
 -\left[\begin{array}{cccc}2\tau&0&0&0\\\gamma^1&\tau+\im\hat{\varrho}&0&0\\\gamma^2&0&\tau+\im\hat{\varsigma}&0\\\gamma^3&\im\gamma^2-\hat{\beta}_1&\im\gamma^1-\beta_2&\im\hat{\varrho}+\im\hat{\varsigma}\end{array}\right]\wedge
 \left[\begin{array}{c}\eta^0\\\eta^1\\\eta^2\\\eta^3\end{array}\right]\\
&\\
 &+
 \left[\begin{array}{c}
 \im\eta^1\wedge\eta^{\overline{1}}+\epsilon\im\eta^2\wedge\eta^{\overline{2}}\\
 \epsilon\eta^3\wedge\eta^{\overline{2}}+(f^1_{12}\eta^2+\epsilon t^1_{2\overline{2}}\eta^{\overline{1}}+\overline{f}^1_{12}\eta^{\overline{2}})\wedge\eta^1+(f^1_{21}\eta^1+t^1_{2\overline{1}}\eta^{\overline{1}}+t^1_{2\overline{2}}\eta^{\overline{2}})\wedge\eta^2 \\
 \eta^3\wedge\eta^{\overline{1}}+(t^2_{1\overline{1}}\eta^{\overline{1}}+t^2_{1\overline{2}}\eta^{\overline{2}})\wedge\eta^1+(\epsilon t^1_{2\overline{2}}\eta^{\overline{1}}+\overline{f}^2_{22}\eta^{\overline{2}})\wedge\eta^2 \\
 t^3_{3\overline{2}}\eta^{\overline{2}}\wedge\eta^3\end{array}\right].
\end{align*}

We begin round two by dropping the hats off the pseudoconnection forms. Round two will proceed analogously to round one, only this time we will use the two remaining vanishing conditions; i.e., the second and the last equations of \eqref{torvan}. We have 
\begin{align*}
 \dd\eta^3&=\beta_2\wedge\eta^2-(\im\varrho+\im\varsigma)\wedge\eta^3+t^3_{3\overline{2}}\eta^{\overline{2}}\wedge\eta^3+\dots\\
 &=(\beta_2-\overline{t}^3_{3\overline{2}}\eta^3)\wedge\eta^2-(\im\varrho+\im\varsigma-t^3_{3\overline{2}}\eta^{\overline{2}}+\overline{t}^3_{3\overline{2}}\eta^{2})\wedge\eta^3+\dots
\end{align*}
so let $\hat{\beta}_2=\beta_2-\overline{t}^3_{3\overline{2}}\eta^3$. We'll look for a new semibasic $\zeta\in\Omega^1(B_2,\im\mathbb{R})$ to write  
\begin{align}\label{iris2}
 &\im\hat{\varrho}:=\im\varrho-\tfrac{1}{2}(t^3_{3\overline{2}}\eta^{\overline{2}}-\overline{t}^3_{3\overline{2}}\eta^{2})+\zeta,
 &\im\hat{\varsigma}:=\im\varsigma-\tfrac{1}{2}(t^3_{3\overline{2}}\eta^{\overline{2}}-\overline{t}^3_{3\overline{2}}\eta^{2})-\zeta,
 \end{align}
and use the fact that the final equation in \eqref{torvan} implies 
\begin{align*}
 t^3_{3\overline{2}}\eta^{\overline{2}}-\overline{t}^3_{3\overline{2}}\eta^{2}=\left(-\overline{f}^1_{12}+\overline{f}^2_{22}+\overline{f}^1_{21}-\epsilon t^2_{1\overline{1}}\right)\eta^{\overline{2}}
 -\left(-f^1_{12}+f^2_{22}+f^1_{21}-\epsilon\overline{t}^2_{1\overline{1}}\right)\eta^{2}.
\end{align*}
This time, define 
\begin{align*}
 \zeta:=\tfrac{1}{2}\left(\overline{f}^1_{12}+\overline{f}^2_{22}-\overline{f}^1_{21}-\epsilon t^2_{1\overline{1}}\right)\eta^{\overline{2}}
 -\tfrac{1}{2}\left(f^1_{12}+f^2_{22}-f^1_{21}-\epsilon\overline{t}^2_{1\overline{1}}\right)\eta^{2},
\end{align*}
so that \eqref{iris2} reads
\begin{align*}
 \im\hat{\varsigma}&=\im\varsigma-\overline{f}^2_{22}\eta^{\overline{2}}+\epsilon t^2_{1\overline{1}}\eta^{\overline{2}}+f^2_{22}\eta^{2}-\epsilon\overline{t}^2_{1\overline{1}}\eta^{2},\\
 \im\hat{\varrho}&=\im\varrho+\overline{f}^1_{12}\eta^{\overline{2}}-\overline{f}^1_{21}\eta^{\overline{2}}-f^1_{12}\eta^{2}+f^1_{21}\eta^{2}\\
 &=\im\varrho-\overline{f}^1_{12}\eta^{\overline{2}}+\epsilon t^2_{1\overline{1}}\eta^{\overline{2}}-f^1_{12}\eta^{2}+f^1_{21}\eta^{2},
 \end{align*}
where the last equality follows from the second equation in \eqref{torvan}. As promised, we now have 
\begin{equation}\label{B2SEabs}
\begin{aligned}
 \dd \left[\begin{array}{c}\eta^0\\\eta^1\\\eta^2\\\eta^3\end{array}\right]&=
 -\left[\begin{array}{cccc}2\tau&0&0&0\\\gamma^1&\tau+\im\hat{\varrho}&0&0\\\gamma^2&0&\tau+\im\hat{\varsigma}&0\\\gamma^3&\im\gamma^2-\beta_1&\im\gamma^1-\hat{\beta}_2&\im\hat{\varrho}+\im\hat{\varsigma}\end{array}\right]\wedge
 \left[\begin{array}{c}\eta^0\\\eta^1\\\eta^2\\\eta^3\end{array}\right]\\
&\\
 &+
 \left[\begin{array}{c}
 \im\eta^1\wedge\eta^{\overline{1}}+\epsilon\im\eta^2\wedge\eta^{\overline{2}}\\
 \epsilon\eta^3\wedge\eta^{\overline{2}}+\epsilon(t^1_{2\overline{2}}\eta^{\overline{1}}+t^2_{1\overline{1}}\eta^{\overline{2}})\wedge\eta^1+(t^1_{2\overline{1}}\eta^{\overline{1}}+t^1_{2\overline{2}}\eta^{\overline{2}})\wedge\eta^2 \\
 \eta^3\wedge\eta^{\overline{1}}+(t^2_{1\overline{1}}\eta^{\overline{1}}+t^2_{1\overline{2}}\eta^{\overline{2}})\wedge\eta^1+\epsilon(t^1_{2\overline{2}}\eta^{\overline{1}}+t^2_{1\overline{1}}\eta^{\overline{2}})\wedge\eta^2 \\
 0\end{array}\right].
\end{aligned}\end{equation}

\vspace{\baselineskip}\subsection{Last Two Reductions}\label{last2}

After removing the hats from our pseudoconnection forms, we normalize some of the remaining torsion coefficients and reduce the structure group as before. To see how these functions vary in the fiber, we first differentiate $\dd\eta^1$ and reduce modulo $\eta^0,\eta^1,\eta^3$.
\begin{align*}
0&=\dd(\dd\eta^1)\\
&\equiv (\dd t^1_{2\overline{1}}-t^1_{2\overline{1}}(\tau-2\im\varrho+\im\varsigma))\wedge\eta^{\overline{1}}\wedge\eta^2+(\dd t^1_{2\overline{2}}-t^1_{2\overline{2}}(\tau-\im\varrho)-\epsilon\beta_2)\wedge\eta^{\overline{2}}\wedge\eta^2 &\mod\{\eta^0,\eta^1,\eta^3\}.
\end{align*}
Now differentiate $\dd\eta^2$ and reduce modulo $\eta^0,\eta^2,\eta^3$.
\begin{align*}
 0&=\dd(\dd\eta^2)\\
&\equiv (\dd t^2_{1\overline{1}}-t^2_{1\overline{1}}(\tau-\im\varsigma)-\beta_1)\wedge\eta^{\overline{1}}\wedge\eta^1+(\dd t^2_{1\overline{2}}-t^2_{1\overline{2}}(\tau+\im\varrho-2\im\varsigma))\wedge\eta^{\overline{2}}\wedge\eta^1 &\mod\{\eta^0,\eta^2,\eta^3\}.
\end{align*}
The two identities
\begin{align}\label{dts}
 \left.\begin{array}{l}
 \dd t^1_{2\overline{2}}\equiv t^1_{2\overline{2}}(\tau-\im\varrho)+\epsilon\beta_2\\ \\
 \dd t^2_{1\overline{1}}\equiv t^2_{1\overline{1}}(\tau-\im\varsigma)+\beta_1
 \end{array}\right\}\mod\{\eta^0,\eta^1,\eta^2,\eta^3,\eta^{\overline{1}},\eta^{\overline{2}}\}
\end{align}
imply that there is a subbundle $B_3\subset B_2$ of \emph{3-adapted coframes} on which 
\begin{align*}
 t^1_{2\overline{2}}=t^2_{1\overline{1}}=0.
\end{align*}
Observe how \eqref{dts} shows that when restricted to $B_3$, we have 
\begin{align}\label{betamod}
 \beta_1,\beta_2\equiv0\mod\{\eta^0,\eta^1,\eta^2,\eta^3,\eta^{\overline{1}},\eta^{\overline{2}}\}.
\end{align}
We fix a 3-adapted coframing $\theta_\mathbbm{1}$ in order to locally trivialize $B_3$. An explicit parameterization of the structure group $G_3\subset G_2$ of $B_3$ is found by taking $g^{-1}\in C^\infty(B_2,G_2)$ to be the matrix in \eqref{G2} and solving in coordinates the differential equations $\beta_1=0$ and $\beta_2=0$ from the identity
\begin{align*}
 g^{-1}\dd g=\left[\begin{array}{cccc}2\tau&0&0&0\\\gamma^1&\tau+\im\varrho&0&0\\\gamma^2&0&\tau+\im\varsigma&0\\\gamma^3&\im\gamma^2-\beta_1&\im\gamma^1-\beta_2&\im\varrho+\im\varsigma\end{array}\right].
\end{align*}
The result of this calculation is that $G_3$ is comprised of those matrices in $G_2$ which satisfy $b_1=\tfrac{\im}{t}\e^{\im r}c^2$ and $b_2=\tfrac{\im}{t}\e^{\im s}c^1$ so that we locally have $B_3\cong G_3\times M$ where $G_3$ is parameterized by 
\begin{align}\label{G3}
\left[\begin{array}{cccc}t^2&0&0&0\\c^1& t\e^{\im r}&0&0\\c^2&0&t\e^{\im s}&0\\c^3&\tfrac{\im}{t}\e^{\im r}c^2&\tfrac{\im}{t}\e^{\im s}c^1&\e^{\im(r+s)}\end{array}\right];\hspace{1cm} r,s,0\neq t\in\mathbb{R}; c^j\in\mathbb{C}.
\end{align}

If $\iota_3:B_3\hookrightarrow B_2$ is the inclusion map, then we let 
\begin{align*}
&F^1:=\iota_3^*t^1_{2\overline{1}},
&F^2:=\iota_3^*t^2_{1\overline{2}}.
\end{align*}
Aside from this relabelling, we maintain the names of every one-form that we pull back along $\iota_3$, so that the structure equations are the same except that $\beta_1,\beta_2$ are now semibasic. Thus, on $B_3$ we have 
\begin{align}\label{B3SE}
 \dd \left[\begin{array}{c}\eta^0\\\eta^1\\\eta^2\\\eta^3\end{array}\right]&=
 -\left[\begin{array}{cccc}2\tau&0&0&0\\\gamma^1&\tau+\im\varrho&0&0\\\gamma^2&0&\tau+\im\varsigma&0\\\gamma^3&\im\gamma^2&\im\gamma^1&\im\varrho+\im\varsigma\end{array}\right]\wedge
 \left[\begin{array}{c}\eta^0\\\eta^1\\\eta^2\\\eta^3\end{array}\right]
 +
 \left[\begin{array}{c}
 \im\eta^1\wedge\eta^{\overline{1}}+\epsilon\im\eta^2\wedge\eta^{\overline{2}}\\
 \epsilon\eta^3\wedge\eta^{\overline{2}}+F^1\eta^{\overline{1}}\wedge\eta^2 \\
 \eta^3\wedge\eta^{\overline{1}}+F^2\eta^{\overline{2}}\wedge\eta^1 \\
 \beta_1\wedge\eta^1+\beta_2\wedge\eta^2\end{array}\right].
\end{align}

We use \eqref{betamod} to expand $\beta_1$ and $\beta_2$, implicitly using that we can absorb $\eta^0$ coefficients into $\gamma^3$.
\begin{align*}
 &\beta_1=f_{11}\eta^1+t_{1\overline{1}}\eta^{\overline{1}}+f_{12}\eta^2+t_{1\overline{2}}\eta^{\overline{2}}+f_{13}\eta^3,
 &\beta_2=f_{21}\eta^1+t_{2\overline{1}}\eta^{\overline{1}}+f_{22}\eta^2+t_{2\overline{2}}\eta^{\overline{2}}+f_{23}\eta^3,
\end{align*}
for some new functions $f,t\in C^\infty(B_3,\mathbb{C})$. 

We now seek to normalize $t_{1\overline{1}}$ and $t_{2\overline{2}}$ to zero. This will require us to collect a few identities. First differentiate $\dd\eta^0$.
\begin{align*}
0&=\dd(\dd\eta^0)\\
&=(-2\dd\tau+\im\gamma^1\wedge\eta^{\overline{1}}-\im\gamma^{\overline{1}}\wedge\eta^1+\epsilon\im\gamma^2\wedge\eta^{\overline{2}}-\epsilon\im\gamma^{\overline{2}}\wedge\eta^2)\wedge\eta^0,
\end{align*}
whence
\begin{align}\label{B3dtau}
2\dd\tau\equiv\im\gamma^1\wedge\eta^{\overline{1}}-\im\gamma^{\overline{1}}\wedge\eta^1+\epsilon\im\gamma^2\wedge\eta^{\overline{2}}-\epsilon\im\gamma^{\overline{2}}\wedge\eta^2\mod\{\eta^0\}.
\end{align}
Now differentiate $\dd\eta^1$.
\begin{equation}\label{d1}
\begin{aligned}
0&=\dd(\dd\eta^1)\\
&=(-\dd\gamma^1+(\tau-\im\varrho)\wedge\gamma^1-\epsilon\gamma^{\overline{2}}\wedge\eta^3+F^1\gamma^{\overline{1}}\wedge\eta^2-F^1\gamma^2\wedge\eta^{\overline{1}}+\epsilon\gamma^3\wedge\eta^{\overline{2}})\wedge\eta^0\\
&+(-\dd\tau-\im\dd\varrho-\im\gamma^1\wedge\eta^{\overline{1}}+\epsilon\im\gamma^2\wedge\eta^{\overline{2}}+\epsilon\eta^{\overline{3}}\wedge\eta^3+F^1F^2\eta^{\overline{2}}\wedge\eta^{\overline{1}}+|F^1|^2\eta^{\overline{2}}\wedge\eta^2)\wedge\eta^1\\
&+(\dd F^1-F^1(\tau-2\im\varrho+\im\varsigma)+\epsilon\overline{F}^2\eta^3)\wedge\eta^{\overline{1}}\wedge\eta^2+\epsilon\beta_1\wedge\eta^1\wedge\eta^{\overline{2}}+\epsilon\beta_2\wedge\eta^2\wedge\eta^{\overline{2}}.
\end{aligned}
\end{equation}
If we reduce this modulo $\eta^0,\eta^1,\eta^{\overline{1}}$, we see that $f_{23}=0$ in the expansion of $\beta_2$. Furthermore, if we reduce modulo $\eta^1,\eta^2$, then by the top line we conclude 
\begin{align}\label{B3dg1}
 \dd\gamma^1\equiv(\tau-\im\varrho)\wedge\gamma^1-\epsilon\gamma^{\overline{2}}\wedge\eta^3-F^1\gamma^2\wedge\eta^{\overline{1}}+\epsilon\gamma^3\wedge\eta^{\overline{2}}\mod\{\eta^0,\eta^1,\eta^2\}.
\end{align}
Next, differentiate $\dd\eta^2$.
\begin{equation}\label{d2}
\begin{aligned}
0&=\dd(\dd\eta^2)\\
&=(-\dd\gamma^2+(\tau-\im\varsigma)\wedge\gamma^2-\gamma^{\overline{1}}\wedge\eta^3 +F^2\gamma^{\overline{2}}\wedge\eta^1-F^2\gamma^1\wedge\eta^{\overline{2}}+\gamma^3\wedge\eta^{\overline{1}})\wedge\eta^0\\
&+(-\dd\tau-\im\dd\varsigma-\epsilon\im\gamma^2\wedge\eta^{\overline{2}}+\im\gamma^1\wedge\eta^{\overline{1}}+\epsilon\eta^{\overline{3}}\wedge\eta^3+F^2F^1\eta^{\overline{1}}\wedge\eta^{\overline{2}}+|F^2|^2\eta^{\overline{1}}\wedge\eta^1)\wedge\eta^2\\
&+(\dd F^2-F^2(\tau+\im\varrho-2\im\varsigma)+\overline{F}^1\eta^3)\wedge\eta^{\overline{2}}\wedge\eta^1+\beta_1\wedge\eta^1\wedge\eta^{\overline{1}}+\beta_2\wedge\eta^2\wedge\eta^{\overline{1}}.
\end{aligned}
\end{equation}
Reducing modulo $\eta^0,\eta^2,\eta^{\overline{2}}$ shows $f_{13}=0$ in the expansion of $\beta_1$. Reducing mod $\eta^1,\eta^2$ then gives
\begin{align}\label{B3dg2}
 \dd\gamma^2\equiv(\tau-\im\varsigma)\wedge\gamma^2-\gamma^{\overline{1}}\wedge\eta^3 -F^2\gamma^1\wedge\eta^{\overline{2}}+\gamma^3\wedge\eta^{\overline{1}}\mod\{\eta^0,\eta^1,\eta^2\}.
\end{align}
Finally, we differentiate $\dd\eta^3$.
\begin{equation}\label{d3}
\begin{aligned}
0&=\dd(\dd\eta^3)\\
&=-(\dd\gamma^3+\gamma^3\wedge(2\tau-\im\varrho-\im\varsigma)+\gamma^1\wedge\beta_1+\gamma^2\wedge\beta_2)\wedge\eta^0\\
&-\im(\dd\gamma^2+\gamma^2\wedge(\tau-\im\varsigma)-F^2\gamma^1\wedge\eta^{\overline{2}}+\gamma^3\wedge\eta^{\overline{1}})\wedge\eta^1\\
&-\im(\dd\gamma^1+\gamma^1\wedge(\tau-\im\varrho)-F^1\gamma^2\wedge\eta^{\overline{1}}+\epsilon\gamma^3\wedge\eta^{\overline{2}})\wedge\eta^2\\
&+(-\im\dd\varrho-\im\dd\varsigma-\epsilon\im\gamma^2\wedge\eta^{\overline{2}}-\im\gamma^1\wedge\eta^{\overline{1}}+\epsilon\beta_1\wedge\eta^{\overline{2}}+\beta_2\wedge\eta^{\overline{1}})\wedge\eta^3\\
&+(\dd\beta_1-(\tau-\im\varsigma)\wedge\beta_1-F^2\beta_2\wedge\eta^{\overline{2}})\wedge\eta^1+(\dd\beta_2-(\tau-\im\varrho)\wedge\beta_2-F^1\beta_1\wedge\eta^{\overline{1}})\wedge\eta^2.
\end{aligned}
\end{equation}
For later use, we note that by reducing modulo $\eta^0,\eta^1,\eta^2$, we get 
\begin{align}\label{B3idrds}
 \im\dd\varrho+\im\dd\varsigma\equiv -\epsilon\im\gamma^2\wedge\eta^{\overline{2}}-\im\gamma^1\wedge\eta^{\overline{1}}+\epsilon\beta_1\wedge\eta^{\overline{2}}+\beta_2\wedge\eta^{\overline{1}}\mod\{\eta^0,\eta^1,\eta^2,\eta^3\}.
\end{align}
Returning to the unreduced equation \eqref{d3}, if we reduce modulo $\eta^0,\eta^1,\eta^3$, plug in the identity for $\dd\gamma^1$ from \eqref{B3dg1}, and expand $\beta_1$ and $\beta_2$, then we have  
\begin{align*}
0&\equiv
(\dd t_{2\overline{1}}-2t_{2\overline{1}}(\tau-\im\varrho)+2\im F^1\gamma^2)\wedge\eta^{\overline{1}}\wedge\eta^2-F^1t_{1\overline{2}}\eta^{\overline{2}}\wedge\eta^{\overline{1}}\wedge\eta^2\\
&+(\dd t_{2\overline{2}}-t_{2\overline{2}}(2\tau-\im\varrho-\im\varsigma)-\epsilon 2\im\gamma^3)\wedge\eta^{\overline{2}}\wedge\eta^2
&\mod\{\eta^0,\eta^1,\eta^3\}.
\end{align*}
If we instead reduce modulo $\eta^0,\eta^2,\eta^3$ and plug in $\dd\gamma^2$ from \eqref{B3dg2}, we see 
\begin{align*}
0&\equiv
(\dd t_{1\overline{1}}-t_{1\overline{1}}(2\tau-\im\varrho-\im\varsigma)-2\im \gamma^3)\wedge\eta^{\overline{1}}\wedge\eta^1-F^2t_{2\overline{1}}\eta^{\overline{1}}\wedge\eta^{\overline{2}}\wedge\eta^1\\
&+(\dd t_{1\overline{2}}-2t_{1\overline{2}}(\tau-\im\varsigma)+2\im F^2\gamma^1)\wedge\eta^{\overline{2}}\wedge\eta^1&\mod\{\eta^0,\eta^2,\eta^3\}.
\end{align*}
The two together show 
\begin{align}\label{dts2}
\left.\begin{array}{l}
  \dd t_{2\overline{2}}\equiv t_{2\overline{2}}(2\tau-\im\varrho-\im\varsigma)+\epsilon 2\im\gamma^3\\
  \dd t_{1\overline{1}}\equiv t_{1\overline{1}}(2\tau-\im\varrho-\im\varsigma)+2\im \gamma^3
      \end{array}\right\}\mod\{\eta^0,\eta^1,\eta^2,\eta^3,\eta^{\overline{1}},\eta^{\overline{2}}\}.
\end{align}

These imply that we can find a subbundle where one of $t_{1\overline{1}},t_{2\overline{2}}$ vanishes identically, but it is not yet clear that there are any coframings on which both vanish. To show this, we revisit the equations \eqref{d1},\eqref{d2}. For the former, we wedge the right side of the equation with $\eta^2$.
\begin{align*}
0&=(\dd^2\eta^1)\wedge\eta^2\\
&=(-\dd\gamma^1+(\tau-\im\varrho)\wedge\gamma^1-\epsilon\gamma^{\overline{2}}\wedge\eta^3-F^1\gamma^2\wedge\eta^{\overline{1}}+\epsilon\gamma^3\wedge\eta^{\overline{2}})\wedge\eta^0\wedge\eta^2\\
&+(-\dd\tau-\im\dd\varrho-\im\gamma^1\wedge\eta^{\overline{1}}+\epsilon\im\gamma^2\wedge\eta^{\overline{2}}+\epsilon\eta^{\overline{3}}\wedge\eta^3)\wedge\eta^1\wedge\eta^2\\
&+F^1F^2\eta^{\overline{2}}\wedge\eta^{\overline{1}}\wedge\eta^1\wedge\eta^2+\epsilon t_{1\overline{1}}\eta^{\overline{1}}\wedge\eta^1\wedge\eta^{\overline{2}}\wedge\eta^2.
\end{align*}
Similarly, wedge the right side of the identity for $\dd(\dd\eta^2)$ with $\eta^1$. 
\begin{align*}
0&=(\dd^2\eta^2)\wedge\eta^1\\
&=(-\dd\gamma^2+(\tau-\im\varsigma)\wedge\gamma^2-\gamma^{\overline{1}}\wedge\eta^3 -F^2\gamma^1\wedge\eta^{\overline{2}}+\gamma^3\wedge\eta^{\overline{1}})\wedge\eta^0\wedge\eta^1\\
&+(-\dd\tau-\im\dd\varsigma-\epsilon\im\gamma^2\wedge\eta^{\overline{2}}+\im\gamma^1\wedge\eta^{\overline{1}}+\epsilon\eta^{\overline{3}}\wedge\eta^3)\wedge\eta^2\wedge\eta^1\\
&+F^2F^1\eta^{\overline{1}}\wedge\eta^{\overline{2}}\wedge\eta^2\wedge\eta^1+t_{2\overline{2}}\eta^{\overline{2}}\wedge\eta^2\wedge\eta^{\overline{1}}\wedge\eta^1.
\end{align*} 
Now subtract the latter from the former, reduce modulo $\eta^0,\eta^3$, and plug in $2\dd\tau$ and $\im\dd\varrho+\im\dd\varsigma$ from \eqref{B3dtau} and \eqref{B3idrds}.
\begin{align*}
0&=(\dd^2\eta^1)\wedge\eta^2-(\dd^2\eta^2)\wedge\eta^1\\
&\equiv -(2\dd\tau+\im\dd\varrho+\im\dd\varsigma)\wedge\eta^1\wedge\eta^2+(\epsilon t_{1\overline{1}}-t_{2\overline{2}})\eta^{\overline{2}}\wedge\eta^2\wedge\eta^{\overline{1}}\wedge\eta^1&&\mod\{\eta^0,\eta^3\}\\
&\equiv 2(\epsilon t_{1\overline{1}}-t_{2\overline{2}})\eta^{\overline{2}}\wedge\eta^2\wedge\eta^{\overline{1}}\wedge\eta^1&&\mod\{\eta^0,\eta^3\}.
\end{align*}

Thus we see that $\epsilon t_{1\overline{1}}=t_{2\overline{2}}$, and by \eqref{dts2} there exists a subbundle $B_4\subset B_3$ of \emph{4-adapted coframes} on which $t_{1\overline{1}}=t_{2\overline{2}}=0$. We also see from \eqref{dts2} that when restricted to $B_4$,
\begin{align}\label{g3mod}
 \gamma^3\equiv0\mod\{\eta^0,\eta^1,\eta^2,\eta^3,\eta^{\overline{1}},\eta^{\overline{2}}\}.
\end{align}
Fix a new 4-adapted coframing $\theta_{\mathbbm{1}}$ in order to locally trivialize $B_4$. As with $G_3$, we seek a parameterization of the structure group $G_4\subset G_3$ of $B_4$ by taking $g^{-1}\in C^\infty(B_3,G_3)$ to be the matrix \eqref{G3} and solving the differential equation $\gamma^3=0$ in 
\begin{align*}
 g^{-1}\dd g=\left[\begin{array}{cccc}2\tau&0&0&0\\\gamma^1&\tau+\im\varrho&0&0\\\gamma^2&0&\tau+\im\varsigma&0\\\gamma^3&\im\gamma^2&\im\gamma^1&\im\varrho+\im\varsigma\end{array}\right].
\end{align*}
The result is that we locally have $B_4\cong G_4\times M$ where $G_4$ is all matrices of the form 
\begin{align}\label{G4}
\left[\begin{array}{cccc}t^2&0&0&0\\c^1& t\e^{\im r}&0&0\\c^2&0&t\e^{\im s}&0\\\tfrac{\im}{t^2}c^1c^2&\tfrac{\im}{t}\e^{\im r}c^2&\tfrac{\im}{t}\e^{\im s}c^1&\e^{\im(r+s)}\end{array}\right];\hspace{1cm} r,s,0\neq t\in\mathbb{R}; c^1,c^2\in\mathbb{C}.
\end{align}

Pulling back along $\iota_4:B_4\hookrightarrow B_3$, we keep the names of all the forms, and relabel  
\begin{align*}
 &T^3:=\iota_4^*(f_{21}-f_{12}),
 &F^3_1:=\iota_4^*t_{1\overline{2}},
 &&F^3_2:=\iota_4^*t_{2\overline{1}},
\end{align*}
so that the structure equations \eqref{B3SE} pull back to 
\begin{equation}\label{B4SEunabsorbed}
{\footnotesize 
\begin{aligned}
 \dd \left[\begin{array}{c}\eta^0\\\eta^1\\\eta^2\\\eta^3\end{array}\right]&=
 -\left[\begin{array}{cccc}2\tau&0&0&0\\\gamma^1&\tau+\im\varrho&0&0\\\gamma^2&0&\tau+\im\varsigma&0\\0&\im\gamma^2&\im\gamma^1&\im\varrho+\im\varsigma\end{array}\right]\wedge
 \left[\begin{array}{c}\eta^0\\\eta^1\\\eta^2\\\eta^3\end{array}\right]
 +
 \left[\begin{array}{c}
 \im\eta^1\wedge\eta^{\overline{1}}+\epsilon\im\eta^2\wedge\eta^{\overline{2}}\\
 \epsilon\eta^3\wedge\eta^{\overline{2}}+F^1\eta^{\overline{1}}\wedge\eta^2 \\
 \eta^3\wedge\eta^{\overline{1}}+F^2\eta^{\overline{2}}\wedge\eta^1 \\
 -\gamma^3\wedge\eta^0+T^3\eta^1\wedge\eta^2+F^3_1\eta^{\overline{2}}\wedge\eta^1+F^3_2\eta^{\overline{1}}\wedge\eta^2\end{array}\right].
\end{aligned} }
\end{equation}
We absorb the real part of $T^3$ as follows. As in \S\ref{absorption}, we focus only on the relevant two-forms.
\begin{align*}
\dd\eta^3&=-\im\gamma^2\wedge\eta^1-\im\gamma^1\wedge\eta^2-(\im\varrho+\im\varsigma)\wedge\eta^3+T^3\eta^1\wedge\eta^2+\dots\\
&=-\im(\gamma^2-\im\tfrac{1}{2}\text{Re}T^3\eta^2)\wedge\eta^1-\im(\gamma^1+\im\tfrac{1}{2}\text{Re}T^3\eta^1)\wedge\eta^2\\
&-(\im\varrho+\im\tfrac{1}{2}\text{Re}T^3\eta^0+\im\varsigma-\im\tfrac{1}{2}\text{Re}T^3\eta^0)\wedge\eta^3+\im\text{Im}T^3\eta^1\wedge\eta^2+\dots
\end{align*}
so let
\begin{align*}
&\im\hat{\varrho}:=\im\varrho+\im\tfrac{1}{2}\text{Re}T^3\eta^0,
&\im\hat{\varsigma}:=\im\varsigma-\im\tfrac{1}{2}\text{Re}T^3\eta^0,
&&\hat{\gamma}^1:=\gamma^1+\im\tfrac{1}{2}\text{Re}T^3\eta^1, 
&&\hat{\gamma}^2:=\gamma^2-\im\tfrac{1}{2}\text{Re}T^3\eta^2,
\end{align*}
and note that these choices leave the structure equations for $\dd\eta^1,\dd\eta^2$ unaltered. We drop the hats as we prepare to absorb new torsion introduced by the pullback along $\iota_4$ of $\gamma^3$. According to \eqref{g3mod}, we expand
\begin{align*}
 \gamma^3=-f^3_0\eta^0-f^3_1\eta^1-T^3_{\overline{1}}\eta^{\overline{1}}-f^3_2\eta^2-T^3_{\overline{2}}\eta^{\overline{2}}-f^3_3\eta^3,
\end{align*}
for some functions $f,T\in C^\infty(B_4,\mathbb{C})$. We absorb the $f^3_1$ and $f^3_2$ terms via
\begin{align*}
 &\im\hat{\gamma}^2:=\im\gamma^2-f^3_1\eta^0,
 &\im\hat{\gamma}^1:=\im\gamma^1-f^3_2\eta^0.
\end{align*}

Now drop the hats for one final absorption -- the imaginary part of $f^3_3$ -- which will proceed in a similar manner to how we treated the real part of $T^3$ above. Notably, we modify forms so that the equations for $\dd\eta^1,\dd\eta^2$ remain unaffected. We have
\begin{align*}
 \dd\eta^3&=-\im\gamma^2\wedge\eta^1-\im\gamma^1\wedge\eta^2-(\im\varrho+\im\varsigma)\wedge\eta^3+f^3_3\eta^3\wedge\eta^0+\dots \\
 &=-\im(\gamma^2+\im\tfrac{1}{2}\text{Im}(f^3_3)\eta^2)\wedge\eta^1-\im(\gamma^1+\im\tfrac{1}{2}\text{Im}(f^3_3)\eta^1)\wedge\eta^2\\
 &-(\im\varrho+\im\varsigma+\im\text{Im}(f^3_3)\eta^0)\wedge\eta^3+\text{Re}(f^3_3)\eta^3\wedge\eta^0\dots, 
\end{align*}
so we define
\begin{align*}
 &\im\hat{\varrho}:=\im\varrho+\im\tfrac{1}{2}\text{Im}(f^3_3)\eta^0,
 &\im\hat{\varsigma}:=\im\varsigma+\im\tfrac{1}{2}\text{Im}(f^3_3)\eta^0,
 &&\hat{\gamma}^1:=\gamma^1+\im\tfrac{1}{2}\text{Im}(f^3_3)\eta^1,
 &&\hat{\gamma}^2:=\gamma^2+\im\tfrac{1}{2}\text{Im}(f^3_3)\eta^2.
\end{align*}
Let us drop the hats and rename
\begin{align*}
 &f^3:=\text{Re}(f^3_3), &\im t^3:=\im\text{Im}T^3.
\end{align*}

By arranging for these torsion coefficients to be purely real and imaginary, we have exhausted the ambiguity in the pseudoconnection forms $\gamma^1,\gamma^2,\im\varrho,\im\varsigma\in\Omega^1(B_4,\mathbb{C})$ which is associated with Lie-algebra compatible additions of semibasic, $\im\mathbb{R}$-valued forms to $\im\varrho$ and $\im\varsigma$. In particular, $\im\varrho$ and $\im\varsigma$ are now completely and intrinsically determined by our choices of torsion normalization, manifested in the structure equations  

\begin{equation}\label{B4SE}
\begin{aligned}
 \dd \left[\begin{array}{c}\eta^0\\\eta^1\\\eta^2\\\eta^3\end{array}\right]&=
 -\left[\begin{array}{cccc}2\tau&0&0&0\\\gamma^1&\tau+\im\varrho&0&0\\\gamma^2&0&\tau+\im\varsigma&0\\0&\im\gamma^2&\im\gamma^1&\im\varrho+\im\varsigma\end{array}\right]\wedge
 \left[\begin{array}{c}\eta^0\\\eta^1\\\eta^2\\\eta^3\end{array}\right]\\
& +
 \left[\begin{array}{c}
 \im\eta^1\wedge\eta^{\overline{1}}+\epsilon\im\eta^2\wedge\eta^{\overline{2}}\\
 \epsilon\eta^3\wedge\eta^{\overline{2}}+F^1\eta^{\overline{1}}\wedge\eta^2 \\
 \eta^3\wedge\eta^{\overline{1}}+F^2\eta^{\overline{2}}\wedge\eta^1 \\
 f^3\eta^3\wedge\eta^0+\im t^3\eta^1\wedge\eta^2+T^3_{\overline{1}}\eta^{\overline{1}}\wedge\eta^0+T^3_{\overline{2}}\eta^{\overline{2}}\wedge\eta^0+F^3_1\eta^{\overline{2}}\wedge\eta^1+F^3_2\eta^{\overline{1}}\wedge\eta^2\end{array}\right].
\end{aligned} 
\end{equation}

In contrast to $\im\varrho$ and $\im\varsigma$, the pseudoconnection forms $\tau$, $\gamma^1$, and $\gamma^2$ are not uniquely determined by the structure equations \eqref{B4SE}, as they are only determined up to permissible additions of semibasic, $\mathbb{R}$-valued one-forms to $\tau$. Specifically, these structure equations are unaltered if we replace 
\begin{align}\label{prolongambiguity}
 \left[\begin{array}{c}\hat{\tau}\\\hat{\gamma}^1\\\hat{\gamma}^2\end{array}\right]:=
 \left[\begin{array}{c}\tau\\\gamma^1\\\gamma^2\end{array}\right]+
 \left[\begin{array}{ccc}y&0&0\\0&y&0\\0&0&y\end{array}\right]
 \left[\begin{array}{c}\eta^0\\\eta^1\\\eta^2\end{array}\right]; \ y\in C^\infty(B_4,\mathbb{R}).
\end{align}
The new variable $y$ fully parameterizes the remaining ambiguity in our pseudoconnection forms; i.e., adding any other combination of semibasic forms to $\tau,\gamma^1,\gamma^2$ will not preserve the structure equations.

\vspace{\baselineskip}\subsection{Prolongation}\label{prolongation}

 The collection of all choices \eqref{prolongambiguity} of $\hat{\tau},\hat{\gamma}^1,\hat{\gamma}^2$ preserving \eqref{B4SE} defines an affine, real line bundle $\hat{\pi}:B_4^{(1)}\to B_4$ with $y$ as a fiber coordinate. $B_4^{(1)}$ is the \emph{prolongation} of our $G_4$-structure $\underline{\pi}:B_4\to M$, and may be interpreted as the bundle of coframes on $B_4$ which are adapted to the structure equations, so that we are essentially starting over the method of equivalence. We commit our usual notational abuse of recycling names as we recursively define the following global, tautological one-forms on $B_4^{(1)}$.  
\begin{align}\label{prolongtaut}
 \left[\begin{array}{c}
\eta^0 \\\eta^1 \\\eta^2 \\\eta^3 \\\varrho \\\varsigma \\\tau \\\gamma^1 \\\gamma^2 \end{array}\right]:=
\left[\begin{array}{ccccccccc}
1&0&0&0&0&0&0&0&0\\       
0&1&0&0&0&0&0&0&0\\ 
0&0&1&0&0&0&0&0&0\\ 
0&0&0&1&0&0&0&0&0\\ 
0&0&0&0&1&0&0&0&0\\ 
0&0&0&0&0&1&0&0&0\\ 
y&0&0&0&0&0&1&0&0\\ 
0&y&0&0&0&0&0&1&0\\ 
0&0&y&0&0&0&0&0&1\end{array}\right] 
\hat{\pi}^*\left[\begin{array}{c}
\eta^0\\\eta^1\\\eta^2\\\eta^3\\\varrho\\\varsigma\\\tau\\\gamma^1\\\gamma^2\end{array}\right].
\end{align}

These four $\mathbb{R}$-valued forms, along with the real and imaginary parts of these five $\mathbb{C}$-valued forms, are one real dimension shy of a full, global coframing of $B_4^{(1)}$. As usual, we find the missing one-form by differentiating the tautological forms and normalizing torsion until the resulting pseudoconnection form is uniquely (hence, globally) defined. From \eqref{prolongtaut} we see that if we maintain the names of our torsion coefficients after pulling back along $\hat{\pi}$, the structure equations \eqref{B4SE} still hold on $B_4^{(1)}$:
\begin{equation}\label{prodeta}
\begin{aligned}
 \dd\eta^0&=-2\tau\wedge\eta^0+\im\eta^1\wedge\eta^{\overline{1}}+\epsilon\im\eta^2\wedge\eta^{\overline{2}},\\
 \dd\eta^1&=-\gamma^1\wedge\eta^0-(\tau+\im\varrho)\wedge\eta^1+\epsilon\eta^3\wedge\eta^{\overline{2}}+F^1\eta^{\overline{1}}\wedge\eta^2,\\
 \dd\eta^2&=-\gamma^2\wedge\eta^0-(\tau+\im\varsigma)\wedge\eta^2+\eta^3\wedge\eta^{\overline{1}}+F^2\eta^{\overline{2}}\wedge\eta^1,\\
 \dd\eta^3&=-\im\gamma^2\wedge\eta^1-\im\gamma^1\wedge\eta^2-(\im\varrho+\im\varsigma)\wedge\eta^3+f^3\eta^3\wedge\eta^0+\im t^3\eta^1\wedge\eta^2\\
 &+T^3_{\overline{1}}\eta^{\overline{1}}\wedge\eta^0+T^3_{\overline{2}}\eta^{\overline{2}}\wedge\eta^0+F^3_1\eta^{\overline{2}}\wedge\eta^1+F^3_2\eta^{\overline{1}}\wedge\eta^2.
\end{aligned}
\end{equation}
For the remaining tautological forms, we have in analogy with \eqref{B0SElong},
\begin{align}\label{proSEshort}
 \dd\left[\begin{array}{c}\im\varrho\\\im\varsigma\\\tau\\\gamma^1\\\gamma^2\end{array}\right]=
 -\left[\begin{array}{ccccc}0&0&0&0&0\\0&0&0&0&0\\0&0&\psi&0&0\\0&0&0&\psi&0\\0&0&0&0&\psi\end{array}\right]\wedge \left[\begin{array}{c}0\\0\\\eta^0\\\eta^1\\\eta^2\end{array}\right]
 +
 \left[\begin{array}{c}\Xi^\varrho\\\Xi^\varsigma\\\Xi^\tau\\\Xi^1\\\Xi^2\end{array}\right],
\end{align}
where $\psi\in\Omega^1(B_4^{(1)})$ is our new pseudoconnection form and the $\Xi\in\Omega^2(B_4^{(1)},\mathbb{C})$ are $\hat{\pi}$-semibasic, apparent torsion two-forms. As always, we discover explicit expressions for our $\Xi$'s by differentiating the known structure equations \eqref{prodeta}. Differentiating the equation for $\dd\eta^0$ yields something familiar:  
\begin{align*}
0&=\dd(\dd\eta^0)\\
&=(-2\dd\tau+\im\gamma^1\wedge\eta^{\overline{1}}-\im\gamma^{\overline{1}}\wedge\eta^1+\epsilon\im\gamma^2\wedge\eta^{\overline{2}}-\epsilon\im\gamma^{\overline{2}}\wedge\eta^2)\wedge\eta^0,
\end{align*} 
whence we conclude
\begin{align}\label{dtau}
2\dd\tau= \im\gamma^1\wedge\eta^{\overline{1}}-\im\gamma^{\overline{1}}\wedge\eta^1+\epsilon\im\gamma^2\wedge\eta^{\overline{2}}-\epsilon\im\gamma^{\overline{2}}\wedge\eta^2+2\zeta_0\wedge\eta^0,
\end{align}
for some $\mathbb{R}$-valued $\zeta_0\in\Omega^1(B_4^{(1)})$. Using the equation for $\dd\eta^1$, we find  
\begin{align*}
0&=\dd(\dd\eta^1)\\
&=(-\dd\gamma^1+(\tau-\im\varrho)\wedge\gamma^1-\epsilon\gamma^{\overline{2}}\wedge\eta^3+F^1\gamma^{\overline{1}}\wedge\eta^2-F^1\gamma^2\wedge\eta^{\overline{1}}-\epsilon T^3_{\overline{1}}\eta^{\overline{1}}\wedge\eta^{\overline{2}}-\epsilon f^3\eta^3\wedge\eta^{\overline{2}})\wedge\eta^0\\
&+(-\dd\tau-\im\dd\varrho-\im\gamma^1\wedge\eta^{\overline{1}}+\epsilon\im\gamma^2\wedge\eta^{\overline{2}}+\epsilon\eta^{\overline{3}}\wedge\eta^3+\epsilon\im t^3\eta^2\wedge\eta^{\overline{2}}+F^1F^2\eta^{\overline{2}}\wedge\eta^{\overline{1}}+|F^1|^2\eta^{\overline{2}}\wedge\eta^2)\wedge\eta^1\\
&+(\dd F^1-F^1(\tau-2\im\varrho+\im\varsigma)+\epsilon\overline{F}^2\eta^3+\epsilon F^3_2\eta^{\overline{2}})\wedge\eta^{\overline{1}}\wedge\eta^2,
\end{align*}
which by Cartan's lemma yields 
\begin{equation}\label{dg1idr}
\begin{aligned}
 \left[\begin{array}{c}
-\dd\gamma^1+(\tau-\im\varrho)\wedge\gamma^1-\epsilon\gamma^{\overline{2}}\wedge\eta^3+F^1\gamma^{\overline{1}}\wedge\eta^2-F^1\gamma^2\wedge\eta^{\overline{1}}-\epsilon T^3_{\overline{1}}\eta^{\overline{1}}\wedge\eta^{\overline{2}}-\epsilon f^3\eta^3\wedge\eta^{\overline{2}}\\
-\dd\tau-\im\dd\varrho-\im\gamma^1\wedge\eta^{\overline{1}}+\epsilon\im\gamma^2\wedge\eta^{\overline{2}}+\epsilon\eta^{\overline{3}}\wedge\eta^3+\epsilon\im t^3\eta^2\wedge\eta^{\overline{2}}+F^1F^2\eta^{\overline{2}}\wedge\eta^{\overline{1}}+|F^1|^2\eta^{\overline{2}}\wedge\eta^2\\
(\dd F^1-F^1(\tau-2\im\varrho+\im\varsigma)+\epsilon\overline{F}^2\eta^3+\epsilon F^3_2\eta^{\overline{2}})\wedge\eta^{\overline{1}}\end{array}\right]\\
=
-\left[\begin{array}{ccc}   
 \zeta^1_0&\zeta^1_1&\xi^1_2\\
\zeta^1_1&\zeta^\varrho_1&\xi^\varrho_2\\
\xi^1_2&\xi^\varrho_2&\zeta^1\end{array}\right]
\wedge
\left[\begin{array}{c}\eta^0\\\eta^1\\\eta^2\end{array}\right],
\end{aligned}
\end{equation}
for some $\xi,\zeta\in\Omega^1(B_4^{(1)},\mathbb{C})$. Plugging this back into the same equation $0=\dd(\dd\eta^1)$ reduced by $\eta^{\overline{1}}$ shows
\begin{equation}\label{xi12xir2mod}
\begin{aligned}
 0&\equiv 
\xi^1_2\wedge\eta^2\wedge\eta^0+\xi^\varrho_2\wedge\eta^2\wedge\eta^1&\mod\{\eta^{\overline{1}}\}\\
\Rightarrow 0&\equiv \xi^1_2,\xi^\varrho_2&\mod\{\eta^0,\eta^1,\eta^2,\eta^{\overline{1}}\}.
\end{aligned}
\end{equation}
Moving on to $\dd\eta^2$,
\begin{align*}
0&=\dd(\dd\eta^2)\\
&=(-\dd\gamma^2+(\tau-\im\varsigma)\wedge\gamma^2-\gamma^{\overline{1}}\wedge\eta^3-F^2\gamma^1\wedge\eta^{\overline{2}}+F^2\gamma^{\overline{2}}\wedge\eta^1-T^3_{\overline{2}}\eta^{\overline{2}}\wedge\eta^{\overline{1}}-f^3\eta^3\wedge\eta^{\overline{1}})\wedge\eta^0\\
&+(-\dd\tau-\im\dd\varsigma+\im\gamma^1\wedge\eta^{\overline{1}}-\epsilon\im\gamma^2\wedge\eta^{\overline{2}}+\epsilon\eta^{\overline{3}}\wedge\eta^3-\im t^3\eta^1\wedge\eta^{\overline{1}}+F^2F^1\eta^{\overline{1}}\wedge\eta^{\overline{2}}+|F^2|^2\eta^{\overline{1}}\wedge\eta^1)\wedge\eta^2\\
&+(\dd F^2-F^2(\tau+\im\varrho-2\im\varsigma)+\overline{F}^1\eta^3+F^3_1\eta^{\overline{1}})\wedge\eta^{\overline{2}}\wedge\eta^1.
\end{align*}
By the same argument,
\begin{equation}\label{dg2ids}
\begin{aligned}
 \left[\begin{array}{c}
-\dd\gamma^2+(\tau-\im\varsigma)\wedge\gamma^2-\gamma^{\overline{1}}\wedge\eta^3-F^2\gamma^1\wedge\eta^{\overline{2}}+F^2\gamma^{\overline{2}}\wedge\eta^1-T^3_{\overline{2}}\eta^{\overline{2}}\wedge\eta^{\overline{1}}-f^3\eta^3\wedge\eta^{\overline{1}}\\
-\dd\tau-\im\dd\varsigma+\im\gamma^1\wedge\eta^{\overline{1}}-\epsilon\im\gamma^2\wedge\eta^{\overline{2}}+\epsilon\eta^{\overline{3}}\wedge\eta^3-\im t^3\eta^1\wedge\eta^{\overline{1}}+F^2F^1\eta^{\overline{1}}\wedge\eta^{\overline{2}}+|F^2|^2\eta^{\overline{1}}\wedge\eta^1\\
(\dd F^2-F^2(\tau+\im\varrho-2\im\varsigma)+\overline{F}^1\eta^3+F^3_1\eta^{\overline{1}})\wedge\eta^{\overline{2}}\end{array}\right]\\
=
-\left[\begin{array}{ccc}   
 \zeta^2_0&\zeta^2_2&\xi^2_1\\
\zeta^2_2&\zeta^\varsigma_2&\xi^\varsigma_1\\
\xi^2_1&\xi^\varsigma_1&\zeta^2\end{array}\right]
\wedge
\left[\begin{array}{c}\eta^0\\\eta^2\\\eta^1\end{array}\right],
\end{aligned}
\end{equation}
for more, yet-unknown $\xi,\zeta\in\Omega^1(B_4^{(1)},\mathbb{C})$ which satisfy
\begin{equation}\label{xi21xis1mod}
\begin{aligned}
0&\equiv\xi^2_1\wedge\eta^1\wedge\eta^0+\xi^\varsigma_1\wedge\eta^1\wedge\eta^2&\mod\{\eta^{\overline{2}}\}\\
\Rightarrow 0&\equiv \xi^2_1,\xi^\varsigma_1&\mod\{\eta^0,\eta^1,\eta^2,\eta^{\overline{2}}\}.
\end{aligned}
\end{equation}

From \eqref{dtau},\eqref{dg1idr}, and \eqref{dg2ids} we have gleaned 
\begin{equation}\label{proSExtend}
\begin{aligned}
 \dd\tau&=\tfrac{\im}{2}\gamma^1\wedge\eta^{\overline{1}}-\tfrac{\im}{2}\gamma^{\overline{1}}\wedge\eta^1+\epsilon\tfrac{\im}{2}\gamma^2\wedge\eta^{\overline{2}}-\epsilon\tfrac{\im}{2}\gamma^{\overline{2}}\wedge\eta^2+\zeta_0\wedge\eta^0,  \\
\\
\im\dd\varrho&=-\tfrac{3\im}{2}\gamma^1\wedge\eta^{\overline{1}}+\tfrac{\im}{2}\gamma^{\overline{1}}\wedge\eta^1+\epsilon\tfrac{\im}{2}\gamma^2\wedge\eta^{\overline{2}}+\epsilon\tfrac{\im}{2}\gamma^{\overline{2}}\wedge\eta^2+\epsilon\eta^{\overline{3}}\wedge\eta^3+\epsilon\im t^3\eta^2\wedge\eta^{\overline{2}}\\
&+F^1F^2\eta^{\overline{2}}\wedge\eta^{\overline{1}}+|F^1|^2\eta^{\overline{2}}\wedge\eta^2+(\zeta^1_1-\zeta_0)\wedge\eta^0+\zeta^\varrho_1\wedge\eta^1+\xi^\varrho_2\wedge\eta^2,    \\ 
\\
 \im\dd\varsigma&=\tfrac{\im}{2}\gamma^1\wedge\eta^{\overline{1}}+\tfrac{\im}{2}\gamma^{\overline{1}}\wedge\eta^1-\epsilon\tfrac{3\im}{2}\gamma^2\wedge\eta^{\overline{2}}+\epsilon\tfrac{\im}{2}\gamma^{\overline{2}}\wedge\eta^2+\epsilon\eta^{\overline{3}}\wedge\eta^3 -\im t^3\eta^1\wedge\eta^{\overline{1}}\\
 &+F^2F^1\eta^{\overline{1}}\wedge\eta^{\overline{2}}+|F^2|^2\eta^{\overline{1}}\wedge\eta^1+(\zeta^2_2-\zeta_0)\wedge\eta^0+\zeta^\varsigma_2\wedge\eta^2+\xi^\varsigma_1\wedge\eta^1,   \\
\\
\dd\gamma^1&=(\tau-\im\varrho)\wedge\gamma^1-\epsilon\gamma^{\overline{2}}\wedge\eta^3+F^1\gamma^{\overline{1}}\wedge\eta^2-F^1\gamma^2\wedge\eta^{\overline{1}}-\epsilon T^3_{\overline{1}}\eta^{\overline{1}}\wedge\eta^{\overline{2}}-\epsilon f^3\eta^3\wedge\eta^{\overline{2}} \\
& + \zeta^1_0\wedge\eta^0+\zeta^1_1\wedge\eta^1+\xi^1_2\wedge\eta^2,  \\
\\
 \dd\gamma^2&=(\tau-\im\varsigma)\wedge\gamma^2-\gamma^{\overline{1}}\wedge\eta^3-F^2\gamma^1\wedge\eta^{\overline{2}}+F^2\gamma^{\overline{2}}\wedge\eta^1-T^3_{\overline{2}}\eta^{\overline{2}}\wedge\eta^{\overline{1}}-f^3\eta^3\wedge\eta^{\overline{1}}\\
& +\zeta^2_0\wedge\eta^0+\zeta^2_2\wedge\eta^2+\xi^2_1\wedge\eta^1. 
\end{aligned}
\end{equation}
We learn a bit more about the $\xi$'s and $\zeta$'s by differentiating the final equation from \eqref{prodeta}.
\begin{align*}
0&=\dd(\dd\eta^3)\\
&=\im(-\dd\gamma^2+(\tau-\im\varsigma)\wedge\gamma^2+ F^2\gamma^1\wedge\eta^{\overline{2}}+T^3_{\overline{2}}\eta^{\overline{2}}\wedge\eta^{\overline{1}})\wedge\eta^1\\
&+\im(-\dd\gamma^1+(\tau-\im\varrho)\wedge\gamma^1+ F^1\gamma^2\wedge\eta^{\overline{1}}+\epsilon T^3_{\overline{1}}\eta^{\overline{1}}\wedge\eta^{\overline{2}})\wedge\eta^2\\
&+(-\im\dd\varrho-\im\dd\varsigma-\im\gamma^1\wedge\eta^{\overline{1}}-\epsilon\im\gamma^2\wedge\eta^{\overline{2}}+\epsilon\im (t^3+f^3)\eta^{\overline{2}}\wedge\eta^2+\im (t^3-f^3)\eta^1\wedge\eta^{\overline{1}})\wedge\eta^3\\
&+(F^3_2\gamma^{\overline{1}}+\im (t^3-f^3)\gamma^1+\epsilon T^3_{\overline{1}}\eta^{\overline{3}})\wedge\eta^2\wedge\eta^0\\
&+(F^3_1\gamma^{\overline{2}}-\im (t^3+f^3)\gamma^2+T^3_{\overline{2}}\eta^{\overline{3}})\wedge\eta^1\wedge\eta^0\\
&+(\dd T^3_{\overline{1}}-T^3_{\overline{1}}(3\tau-2\im\varrho-\im\varsigma)-F^3_2\gamma^2+(T^3_{\overline{2}}\overline{F}^2-f^3F^3_2)\eta^{2})\wedge\eta^{\overline{1}}\wedge\eta^0\\
&+(\dd T^3_{\overline{2}}-T^3_{\overline{2}}(3\tau-\im\varrho-2\im\varsigma)-F^3_1\gamma^1+(T^3_{\overline{1}}\overline{F}^1-f^3F^3_1)\eta^{1})\wedge\eta^{\overline{2}}\wedge\eta^0\\
&+(\dd F^3_1-2F^3_1(\tau-\im\varsigma)-F^3_2\overline{F}^1\eta^2-F^3_2F^2\eta^{\overline{1}})\wedge\eta^{\overline{2}}\wedge\eta^1\\
&+(\dd F^3_2-2F^3_2(\tau-\im\varrho)-F^3_1\overline{F}^2\eta^1-F^3_1F^1\eta^{\overline{2}})\wedge\eta^{\overline{1}}\wedge\eta^2 \\
&+\im(\dd t^3- 2t^3\tau+ f^3t^3\eta^0)\wedge\eta^1\wedge\eta^2+(\dd f^3-2f^3\tau)\wedge\eta^3\wedge\eta^0.
\end{align*}
 After plugging in \eqref{proSExtend}, this becomes 
\begin{equation}\label{d2eta3xiszetas}
\begin{aligned}
0&=(\zeta^\varrho_1+\xi^\varsigma_1+2\im\gamma^{\overline{1}}+(|F^2|^2+2\im(t^3-f^3))\eta^{\overline{1}})\wedge\eta^3\wedge\eta^1 \\
&+(\xi^\varrho_2+\zeta^\varsigma_2+\epsilon2\im\gamma^{\overline{2}}+(|F^1|^2-\epsilon2\im(t^3+f^3))\eta^{\overline{2}})\wedge\eta^3\wedge\eta^2 \\
&+(\im\zeta^1_0+F^3_2\gamma^{\overline{1}}+\im (t^3-f^3)\gamma^1+\epsilon T^3_{\overline{1}}\eta^{\overline{3}})\wedge\eta^2\wedge\eta^0\\
&+(\im\zeta^2_0+F^3_1\gamma^{\overline{2}}-\im (t^3+f^3)\gamma^2+T^3_{\overline{2}}\eta^{\overline{3}})\wedge\eta^1\wedge\eta^0\\
&+(\dd T^3_{\overline{1}}-T^3_{\overline{1}}(3\tau-2\im\varrho-\im\varsigma)-F^3_2\gamma^2+(T^3_{\overline{2}}\overline{F}^2-f^3F^3_2)\eta^{2})\wedge\eta^{\overline{1}}\wedge\eta^0\\
&+(\dd T^3_{\overline{2}}-T^3_{\overline{2}}(3\tau-\im\varrho-2\im\varsigma)-F^3_1\gamma^1+(T^3_{\overline{1}}\overline{F}^1-f^3F^3_1)\eta^{1})\wedge\eta^{\overline{2}}\wedge\eta^0\\
&+(\dd F^3_1-2F^3_1(\tau-\im\varsigma)+ 2\im F^2\gamma^1-F^3_2\overline{F}^1\eta^2-(F^3_2F^2+2\im T^3_{\overline{2}})\eta^{\overline{1}})\wedge\eta^{\overline{2}}\wedge\eta^1\\
&+(\dd F^3_2-2F^3_2(\tau-\im\varrho)+2\im F^1\gamma^2-F^3_1\overline{F}^2\eta^1-(F^3_1F^1+ \epsilon2\im T^3_{\overline{1}})\eta^{\overline{2}})\wedge\eta^{\overline{1}}\wedge\eta^2 \\
&+\im(\dd t^3- 2t^3\tau+ f^3t^3\eta^0-\zeta^1_1+\zeta^2_2)\wedge\eta^1\wedge\eta^2+(\dd f^3-2f^3\tau+\zeta^1_1+\zeta^2_2-2\zeta_0)\wedge\eta^3\wedge\eta^0.
\end{aligned}
\end{equation}
For later use, we observe that if we reduce by $\{\eta^0,\eta^3,\eta^{\overline{1}},\eta^{\overline{2}}\}$ or $\{\eta^1,\eta^2,\eta^{\overline{1}},\eta^{\overline{2}}\}$, respectively, then we can say 
\begin{align}\label{dtdf}
 \left.\begin{array}{l}
   0\equiv \dd t^3- 2t^3\tau+ f^3t^3\eta^0-\zeta^1_1+\zeta^2_2\\    
   0\equiv \dd f^3-2f^3\tau+\zeta^1_1+\zeta^2_2-2\zeta_0\end{array}\right\}\mod\{\eta^0,\eta^1,\eta^2,\eta^3,\eta^{\overline{1}},\eta^{\overline{2}}\}.
\end{align}

Now we return to the unreduced equation \eqref{d2eta3xiszetas}. With \eqref{xi12xir2mod} and \eqref{xi21xis1mod} in mind, we see that reduction modulo $\{\eta^0,\eta^1,\eta^{\overline{1}}\}$, $\{\eta^0,\eta^2,\eta^{\overline{2}}\}$, $\{\eta^1,\eta^3,\eta^{\overline{1}},\eta^{\overline{2}}\}$, $\{\eta^2,\eta^3,\eta^{\overline{1}},\eta^{\overline{2}}\}$, respectively yields
\begin{equation}\label{zetasmod}
\begin{aligned}
 \zeta^\varsigma_2&\equiv -\epsilon 2\im\gamma^{\overline{2}}-(|F^1|^2-\epsilon 2\im (t^3+f^3))\eta^{\overline{2}} &\mod\{\eta^0,\eta^1,\eta^{\overline{1}},\eta^2,\eta^3\},\\
 \zeta^\varrho_1&\equiv -2\im\gamma^{\overline{1}}-(|F^2|^2+2\im(t^3-f^3))\eta^{\overline{1}}  &\mod\{\eta^0,\eta^2,\eta^{\overline{2}},\eta^1,\eta^3\},\\
 \zeta^1_0&\equiv \im F^3_2\gamma^{\overline{1}}- (t^3-f^3)\gamma^1+\epsilon\im T^3_{\overline{1}}\eta^{\overline{3}} &\mod\{\eta^1,\eta^3,\eta^{\overline{1}},\eta^{\overline{2}},\eta^2,\eta^0\},\\
 \zeta^2_0&\equiv \im F^3_1\gamma^{\overline{2}}+ (t^3+f^3)\gamma^2+\im T^3_{\overline{2}}\eta^{\overline{3}} &\mod\{\eta^2,\eta^3,\eta^{\overline{1}},\eta^{\overline{2}},\eta^1,\eta^0\}.
\end{aligned}
\end{equation}
Thus, if we define 
\begin{align*}
 \xi^\varsigma_2&:=\zeta^\varsigma_2+\epsilon 2\im\gamma^{\overline{2}}+(|F^1|^2-\epsilon 2\im (t^3+f^3))\eta^{\overline{2}},\\
 \xi^\varrho_1&:=\zeta^\varrho_1+2\im\gamma^{\overline{1}}+(|F^2|^2+2\im(t^3-f^3))\eta^{\overline{1}},\\
 \xi^1_0&:=\zeta^1_0-\im F^3_2\gamma^{\overline{1}}+ (t^3-f^3)\gamma^1-\epsilon\im T^3_{\overline{1}}\eta^{\overline{3}},\\ 
 \xi^2_0&:=\zeta^2_0-\im F^3_1\gamma^{\overline{2}}- (t^3+f^3)\gamma^2-\im T^3_{\overline{2}}\eta^{\overline{3}},\\
 \xi_0&:=\zeta_0+\psi,\\
 \xi^1_1&:=\zeta^1_1+\psi,\\
 \xi^2_2&:=\zeta^2_2+\psi,
\end{align*}
then we are left with an expression for each of the $\Xi$'s in the structure equations \eqref{proSEshort} of $B_4^{(1)}$:
\begin{equation}\label{Xixi}
\begin{aligned}
\Xi^\tau&=\tfrac{\im}{2}\gamma^1\wedge\eta^{\overline{1}}-\tfrac{\im}{2}\gamma^{\overline{1}}\wedge\eta^1+\epsilon \tfrac{\im}{2}\gamma^2\wedge\eta^{\overline{2}}-\epsilon \tfrac{\im}{2}\gamma^{\overline{2}}\wedge\eta^2+\xi_0\wedge\eta^0,\\
\\
\Xi^\varrho&=-\tfrac{3\im}{2}\gamma^1\wedge\eta^{\overline{1}}-\tfrac{3\im}{2}\gamma^{\overline{1}}\wedge\eta^1+\epsilon\tfrac{\im}{2}\gamma^2\wedge\eta^{\overline{2}}+\epsilon\tfrac{\im}{2}\gamma^{\overline{2}}\wedge\eta^2+\epsilon \eta^{\overline{3}}\wedge\eta^3+F^1F^2\eta^{\overline{2}}\wedge\eta^{\overline{1}}\\
&+(|F^1|^2-\epsilon \im t^3)\eta^{\overline{2}}\wedge\eta^2-(|F^2|^2+2\im(t^3-f^3))\eta^{\overline{1}}\wedge\eta^1+(\xi^1_1-\xi_0)\wedge\eta^0+\xi^\varrho_1\wedge\eta^1+\xi^\varrho_2\wedge\eta^2,\\
\\
\Xi^\varsigma&=\tfrac{\im}{2}\gamma^1\wedge\eta^{\overline{1}}+\tfrac{\im}{2}\gamma^{\overline{1}}\wedge\eta^1-\epsilon \tfrac{3\im}{2}\gamma^2\wedge\eta^{\overline{2}}-\epsilon \tfrac{3\im}{2}\gamma^{\overline{2}}\wedge\eta^2+\epsilon\eta^{\overline{3}}\wedge\eta^3+F^2F^1\eta^{\overline{1}}\wedge\eta^{\overline{2}}\\
&+(|F^2|^2+\im t^3)\eta^{\overline{1}}\wedge\eta^1-(|F^1|^2-\epsilon 2\im(f^3+t^3))\eta^{\overline{2}}\wedge\eta^2+(\xi^2_2-\xi_0)\wedge\eta^0+\xi^\varsigma_1\wedge\eta^1+\xi^\varsigma_2\wedge\eta^2,\\
\\
\Xi^1&=(\tau-\im\varrho)\wedge\gamma^1-\epsilon\gamma^{\overline{2}}\wedge\eta^3+\im F^3_2\gamma^{\overline{1}}\wedge\eta^0+F^1\gamma^{\overline{1}}\wedge\eta^2-F^1\gamma^2\wedge\eta^{\overline{1}}+ (t^3-f^3)\gamma^1\wedge\eta^0  \\
 &-\epsilon f^3\eta^3\wedge\eta^{\overline{2}} -\epsilon T^3_{\overline{1}}\eta^{\overline{1}}\wedge\eta^{\overline{2}} +\epsilon\im T^3_{\overline{1}}\eta^{\overline{3}}\wedge\eta^0+ \xi^1_0\wedge\eta^0+\xi^1_1\wedge\eta^1+\xi^1_2\wedge\eta^2,  \\
\\
\Xi^2&=(\tau-\im\varsigma)\wedge\gamma^2-\gamma^{\overline{1}}\wedge\eta^3+\im F^3_1\gamma^{\overline{2}}\wedge\eta^0-F^2\gamma^1\wedge\eta^{\overline{2}}+F^2\gamma^{\overline{2}}\wedge\eta^1+ (t^3+f^3)\gamma^2\wedge\eta^0\\
& -f^3\eta^3\wedge\eta^{\overline{1}}-T^3_{\overline{2}}\eta^{\overline{2}}\wedge\eta^{\overline{1}}+\im T^3_{\overline{2}}\eta^{\overline{3}}\wedge\eta^0+\xi^2_0\wedge\eta^0+\xi^2_2\wedge\eta^2+\xi^2_1\wedge\eta^1, 
\end{aligned}
\end{equation}
where, by \eqref{xi12xir2mod},\eqref{xi21xis1mod}, and \eqref{zetasmod}, we now have 
\begin{align}\label{xismod3}
 0\equiv\left\{\begin{array}{ll}
       \xi^1_2, \xi^\varrho_2 & \mod\{\eta^0,\eta^1,\eta^2,\eta^{\overline{1}}\}, \\
       \xi^2_1,\xi^\varsigma_1&\mod\{\eta^0,\eta^1,\eta^2,\eta^{\overline{2}}\}, \\
       \xi^\varrho_1  & \mod\{\eta^0,\eta^2,\eta^{\overline{2}},\eta^1,\eta^3\},\\
      \xi^\varsigma_2 & \mod\{\eta^0,\eta^1,\eta^{\overline{1}},\eta^2,\eta^3\},\\
      \xi^1_0,\xi^2_0 & \mod\{\eta^0,\eta^1,\eta^{\overline{1}},\eta^2,\eta^{\overline{2}},\eta^3\}.\end{array}\right.
\end{align}
Using the fact that $\im\dd\varrho$ is $\im\mathbb{R}$-valued, we can write 
\begin{equation}\label{Xirimag}
\begin{aligned}
 0&=\Xi^\varrho+\overline{\Xi}^\varrho\\
&=(\xi^1_1+\xi^{\overline{1}}_{\overline{1}}-2\xi_0)\wedge\eta^0+\xi^\varrho_1\wedge\eta^1+\xi^{\overline{\varrho}}_{\overline{1}}\wedge\eta^{\overline{1}}+\epsilon2\im t^3\eta^{2}\wedge\eta^{\overline{2}}+4\im(t^3-f^3)\eta^{1}\wedge\eta^{\overline{1}}\\
&+(\xi^\varrho_2-\overline{F}^1\overline{F}^2\eta^{1})\wedge\eta^2+(\xi^{\overline{\varrho}}_{\overline{2}}-F^1F^2\eta^{\overline{1}})\wedge\eta^{\overline{2}},
\end{aligned}
\end{equation}
which along with \eqref{xi12xir2mod} shows that $t^3=f^3=0$. Plugging these zeros into \eqref{dtdf} yields 
\begin{align*}
 \left.\begin{array}{l}
   0\equiv -\xi^1_1+\xi^2_2\\    
   0\equiv \xi^1_1+\xi^2_2-2\xi_0\end{array}\right\}\mod\{\eta^0,\eta^1,\eta^2,\eta^3,\eta^{\overline{1}},\eta^{\overline{2}}\},
\end{align*}
so in particular,
\begin{align}\label{xi1xi2mod}
\left.\begin{array}{l}
  \xi^1:=\xi^1_1-\xi_0\\    
  \xi^2:=\xi^2_2-\xi_0\end{array}\right\}\equiv0\mod\{\eta^0,\eta^1,\eta^2,\eta^3,\eta^{\overline{1}},\eta^{\overline{2}}\}.
\end{align}

We know that $\xi_0$ is $\mathbb{R}$-valued, so we can replace $\psi$ with $\hat{\psi}=\psi-\xi_0$, which has the effect of removing the $\xi_0$ term in the equation for $\dd\tau$ and replacing $\xi^i_i$ with $\xi^i:=\xi^i_i-\xi_0$ ($i=1,2$) in the equation for $\dd\gamma^i$. We therefore update our structure equations
\begin{equation}\label{proSExis}
\begin{aligned}
\dd\tau&=-\hat{\psi}\wedge\eta^0+\tfrac{\im}{2}\gamma^1\wedge\eta^{\overline{1}}-\tfrac{\im}{2}\gamma^{\overline{1}}\wedge\eta^1+\epsilon\tfrac{\im}{2}\gamma^2\wedge\eta^{\overline{2}}-\epsilon\tfrac{\im}{2}\gamma^{\overline{2}}\wedge\eta^2,\\
\\
\im\dd\varrho&=-\tfrac{3\im}{2}\gamma^1\wedge\eta^{\overline{1}}-\tfrac{3\im}{2}\gamma^{\overline{1}}\wedge\eta^1+\epsilon\tfrac{\im}{2}\gamma^2\wedge\eta^{\overline{2}}+\epsilon\tfrac{\im}{2}\gamma^{\overline{2}}\wedge\eta^2+\epsilon\eta^{\overline{3}}\wedge\eta^3+F^1F^2\eta^{\overline{2}}\wedge\eta^{\overline{1}}\\
&+|F^1|^2\eta^{\overline{2}}\wedge\eta^2-|F^2|^2\eta^{\overline{1}}\wedge\eta^1+\xi^1\wedge\eta^0+\xi^\varrho_1\wedge\eta^1+\xi^\varrho_2\wedge\eta^2,\\
\\
\im\dd\varsigma&=\tfrac{\im}{2}\gamma^1\wedge\eta^{\overline{1}}+\tfrac{\im}{2}\gamma^{\overline{1}}\wedge\eta^1-\epsilon\tfrac{3\im}{2}\gamma^2\wedge\eta^{\overline{2}}-\epsilon\tfrac{3\im}{2}\gamma^{\overline{2}}\wedge\eta^2+\epsilon\eta^{\overline{3}}\wedge\eta^3+F^2F^1\eta^{\overline{1}}\wedge\eta^{\overline{2}}\\
&+|F^2|^2\eta^{\overline{1}}\wedge\eta^1-|F^1|^2\eta^{\overline{2}}\wedge\eta^2+\xi^2\wedge\eta^0+\xi^\varsigma_1\wedge\eta^1+\xi^\varsigma_2\wedge\eta^2,\\
\\
\dd\gamma^1&=-\hat{\psi}\wedge\eta^1+(\tau-\im\varrho)\wedge\gamma^1-\epsilon\gamma^{\overline{2}}\wedge\eta^3+\im F^3_2\gamma^{\overline{1}}\wedge\eta^0+F^1\gamma^{\overline{1}}\wedge\eta^2-F^1\gamma^2\wedge\eta^{\overline{1}}  \\
 & -\epsilon T^3_{\overline{1}}\eta^{\overline{1}}\wedge\eta^{\overline{2}} +\epsilon\im T^3_{\overline{1}}\eta^{\overline{3}}\wedge\eta^0+ \xi^1_0\wedge\eta^0+\xi^1\wedge\eta^1+\xi^1_2\wedge\eta^2,  \\
\\
\dd\gamma^2&=-\hat{\psi}\wedge\eta^2+(\tau-\im\varsigma)\wedge\gamma^2-\gamma^{\overline{1}}\wedge\eta^3+\im F^3_1\gamma^{\overline{2}}\wedge\eta^0-F^2\gamma^1\wedge\eta^{\overline{2}}+F^2\gamma^{\overline{2}}\wedge\eta^1\\
&-T^3_{\overline{2}}\eta^{\overline{2}}\wedge\eta^{\overline{1}}+\im T^3_{\overline{2}}\eta^{\overline{3}}\wedge\eta^0+\xi^2_0\wedge\eta^0+\xi^2_1\wedge\eta^1+\xi^2\wedge\eta^2,
\end{aligned}
\end{equation}
where by \eqref{xismod3} and \eqref{xi1xi2mod} we can say
\begin{align*}
 0\equiv\left\{\begin{array}{ll}
       \xi^1_2, \xi^\varrho_2 & \mod\{\eta^0,\eta^1,\eta^2,\eta^{\overline{1}}\}, \\
       \xi^2_1,\xi^\varsigma_1&\mod\{\eta^0,\eta^1,\eta^2,\eta^{\overline{2}}\}, \\
       \xi^\varrho_1  & \mod\{\eta^0,\eta^2,\eta^{\overline{2}},\eta^1,\eta^3\},\\
      \xi^\varsigma_2 & \mod\{\eta^0,\eta^1,\eta^{\overline{1}},\eta^2,\eta^3\},\\
      \xi^1_0,\xi^2_0,\xi^1,\xi^2 & \mod\{\eta^0,\eta^1,\eta^{\overline{1}},\eta^2,\eta^{\overline{2}},\eta^3\}.\end{array}\right.
\end{align*}
By collecting coefficients of redundant two-forms and suppressing forms which are only wedged against themselves in all of the equations, we may more specifically assume
\begin{align*}
& 0\equiv\left\{\begin{array}{ll}
       \xi^1_2,\xi^\varrho_2 & \mod\{\eta^0,\eta^1,\eta^{\overline{1}}\},\\
       \xi^2_1,\xi^\varsigma_1&\mod\{\eta^0,\eta^2,\eta^{\overline{2}}\} ,\\
       \xi^\varrho_1  & \mod\{\eta^0,\eta^{\overline{2}},\eta^3\},\\
       \xi^\varsigma_2 & \mod\{\eta^0,\eta^{\overline{1}},\eta^3\}\\
       \xi^1_0 & \mod\{\eta^1,\eta^{\overline{1}},\eta^{\overline{2}},\eta^3\},\\
      \xi^2_0 & \mod\{\eta^{\overline{1}},\eta^2,\eta^{\overline{2}},\eta^3\},\\
      \xi^1,\xi^2 & \mod\{\eta^{\overline{1}},\eta^{\overline{2}},\eta^3\}.\end{array}\right.
\end{align*}
Let us therefore expand 
\begin{align*}
\begin{array}{l}
\xi^1_{0} = P^1_{01}\eta^1+P^1_{0\overline{1}}\eta^{\overline{1}}+P^1_{0\overline{2}}\eta^{\overline{2}}+P^1_{03}\eta^3,\\
\xi^1_{2} = P^1_{20}\eta^0+P^1_{21}\eta^1+P^1_{2\overline{1}}\eta^{\overline{1}},\\
\xi^1 = Q^1_{\overline{1}}\eta^{\overline{1}}+Q^1_{\overline{2}}\eta^{\overline{2}}+Q^1_{3}\eta^3,\\
\xi^\varrho_{1}  = R_{10}\eta^0+R_{1\overline{2}}\eta^{\overline{2}}+R_{13}\eta^3,\\
\xi^\varsigma_{1}=S_{10}\eta^0+S_{12}\eta^2+S_{1\overline{2}}\eta^{\overline{2}},
\end{array}
&&\begin{array}{l}
\xi^2_{0} = P^2_{0\overline{1}}\eta^{\overline{1}}+P^2_{02}\eta^2+P^2_{0\overline{2}}\eta^{\overline{2}}+P^2_{03}\eta^3,\\
\xi^2_{1}=P^2_{10}\eta^0+P^2_{12}\eta^2+P^2_{1\overline{2}}\eta^{\overline{2}},\\
\xi^2 = Q^2_{\overline{1}}\eta^{\overline{1}}+Q^2_{\overline{2}}\eta^{\overline{2}}+Q^2_{3}\eta^3,\\
\xi^\varrho_{2} = R_{20}\eta^0+R_{21}\eta^1+R_{2\overline{1}}\eta^{\overline{1}},\\
\xi^\varsigma_{2} = S_{20}\eta^0+S_{2\overline{1}}\eta^{\overline{1}}+S_{23}\eta^3,
\end{array}
\end{align*}
for some functions $P,Q,R,S\in C^\infty(B_4^{(1)},\mathbb{C})$. With these in hand, we return to our argument about the imaginary value of $\im\dd\varrho$ from \eqref{Xirimag}.
\begin{align*}
 0&=\Xi^\varrho+\overline{\Xi}^\varrho\\
&=(\xi^1+\xi^{\overline{1}})\wedge\eta^0+\xi^\varrho_1\wedge\eta^1+\xi^{\overline{\varrho}}_{\overline{1}}\wedge\eta^{\overline{1}}+(\xi^\varrho_2-\overline{F}^1\overline{F}^2\eta^{1})\wedge\eta^2+(\xi^{\overline{\varrho}}_{\overline{2}}-F^1F^2\eta^{\overline{1}})\wedge\eta^{\overline{2}}\\
&=(Q^1_{\overline{1}}\eta^{\overline{1}}+Q^1_{\overline{2}}\eta^{\overline{2}}+Q^1_{3}\eta^3+\overline{Q}^1_{\overline{1}}\eta^{1}+\overline{Q}^1_{\overline{2}}\eta^{2}+\overline{Q}^1_{3}\eta^{\overline{3}})\wedge\eta^0+(R_{10}\eta^0+R_{1\overline{2}}\eta^{\overline{2}}+R_{13}\eta^3)\wedge\eta^1\\
&+(\overline{R}_{10}\eta^0+\overline{R}_{1\overline{2}}\eta^{2}+\overline{R}_{13}\eta^{\overline{3}})\wedge\eta^{\overline{1}}+(R_{20}\eta^0+R_{21}\eta^1+R_{2\overline{1}}\eta^{\overline{1}}-\overline{F}^1\overline{F}^2\eta^{1})\wedge\eta^2\\
&+(\overline{R}_{20}\eta^0+\overline{R}_{21}\eta^{\overline{1}}+\overline{R}_{2\overline{1}}\eta^{1}-F^1F^2\eta^{\overline{1}})\wedge\eta^{\overline{2}}.
\end{align*}
Thus we see that 
\begin{align*}
&Q^1_{3}=R_{13}=0,
&R_{21}=\overline{F}^1\overline{F}^2,
&&R_{10}=\overline{Q}^1_{\overline{1}},
&&R_{20}=\overline{Q}^1_{\overline{2}},
&&&R_{1\overline{2}}=\overline{R}_{2\overline{1}}.
\end{align*}
Similarly, $\im\varsigma$ is $\im\mathbb{R}$-valued, and we have 
\begin{align*}
 0&=\Xi^\varsigma+\overline{\Xi}^\varsigma\\
&=(\xi^2+\xi^{\overline{2}})\wedge\eta^0+(\xi^\varsigma_1-\overline{F}^2\overline{F}^1\eta^{2})\wedge\eta^1+(\xi^{\overline{\varsigma}}_{\overline{1}}-F^2F^1\eta^{\overline{2}})\wedge\eta^{\overline{1}}+\xi^\varsigma_2\wedge\eta^2+\xi^{\overline{\varsigma}}_{\overline{2}}\wedge\eta^{\overline{2}}\\
&=(Q^2_{\overline{1}}\eta^{\overline{1}}+Q^2_{\overline{2}}\eta^{\overline{2}}+Q^2_{3}\eta^3+\overline{Q}^2_{\overline{1}}\eta^{1}+\overline{Q}^2_{\overline{2}}\eta^{2}+\overline{Q}^2_{3}\eta^{\overline{3}})\wedge\eta^0+(S_{10}\eta^0+S_{12}\eta^2+S_{1\overline{2}}\eta^{\overline{2}}-\overline{F}^2\overline{F}^1\eta^{2})\wedge\eta^1\\
&+(\overline{S}_{10}\eta^0+\overline{S}_{12}\eta^{\overline{2}}+\overline{S}_{1\overline{2}}\eta^{2}-F^2F^1\eta^{\overline{2}})\wedge\eta^{\overline{1}}+(S_{20}\eta^0+S_{2\overline{1}}\eta^{\overline{1}}+S_{23}\eta^3)\wedge\eta^2\\
&+(\overline{S}_{20}\eta^0+\overline{S}_{2\overline{1}}\eta^{1}+\overline{S}_{23}\eta^{\overline{3}})\wedge\eta^{\overline{2}},
\end{align*}
whence 
\begin{align*}
&Q^2_{3}=S_{23}=0,
&S_{12}=\overline{F}^1\overline{F}^2,
&&S_{10}=\overline{Q}^2_{\overline{1}},
&&S_{20}=\overline{Q}^2_{\overline{2}},
&&&S_{1\overline{2}}=\overline{S}_{2\overline{1}}.
\end{align*}
We reveal a few more relations by revisiting our original structure equations.
\begin{align*}
 0&\equiv \dd^2\eta^1&\mod\{\eta^{\overline{1}}\}\\
&\equiv\xi^1_2\wedge\eta^2\wedge\eta^0+\xi^\varrho_2\wedge\eta^2\wedge\eta^1&\mod\{\eta^{\overline{1}}\}\\
&\equiv P^1_{21}\eta^1\wedge\eta^2\wedge\eta^0+\overline{Q}^1_{\overline{2}}\eta^0\wedge\eta^2\wedge\eta^1&\mod\{\eta^{\overline{1}}\}\\
\\
\Rightarrow & P^1_{21}=\overline{Q}^1_{\overline{2}}.
\end{align*}
Similarly, 
\begin{align*}
0&\equiv\dd^2\eta^2&\mod\{\eta^{\overline{2}}\}\\
&\equiv\xi^2_1\wedge\eta^1\wedge\eta^0+\xi^\varsigma_1\wedge\eta^1\wedge\eta^2&\mod\{\eta^{\overline{2}}\}\\
&\equiv P^2_{12}\eta^2\wedge\eta^1\wedge\eta^0+\overline{Q}^2_{\overline{1}}\eta^0\wedge\eta^1\wedge\eta^2&\mod\{\eta^{\overline{2}}\}\\
\\
\Rightarrow & P^2_{12}=\overline{Q}^2_{\overline{1}}.
\end{align*}
And finally,
\begin{align*}
0&\equiv \dd^2\eta^3 &\mod\{\eta^{\overline{1}},\eta^{\overline{2}}\}\\
&\equiv(\xi^\varrho_1+\xi^\varsigma_1)\wedge\eta^3\wedge\eta^1 +(\xi^\varrho_2+\xi^\varsigma_2)\wedge\eta^3\wedge\eta^2 +\im\xi^1_0\wedge\eta^2\wedge\eta^0+\im\xi^2_0\wedge\eta^1\wedge\eta^0\\
&+\im(-\xi^1+\xi^2)\wedge\eta^1\wedge\eta^2+(\xi^1+\xi^2)\wedge\eta^3\wedge\eta^0&\mod\{\eta^{\overline{1}},\eta^{\overline{2}}\}\\
&\equiv(\overline{Q}^1_{\overline{1}}+\overline{Q}^2_{\overline{1}})\eta^0\wedge\eta^3\wedge\eta^1 +(\overline{Q}^1_{\overline{2}}+\overline{Q}^2_{\overline{2}})\eta^0\wedge\eta^3\wedge\eta^2\\
&+\im(P^1_{01}\eta^1+P^1_{03}\eta^3)\wedge\eta^2\wedge\eta^0+\im(P^2_{02}\eta^2+P^2_{03}\eta^3)\wedge\eta^1\wedge\eta^0&\mod\{\eta^{\overline{1}},\eta^{\overline{2}}\}\\
\\
\Rightarrow &P^1_{03}=\im(\overline{Q}^1_{\overline{2}}+\overline{Q}^2_{\overline{2}}),\hspace{0.5cm} P^2_{03}=\im(\overline{Q}^1_{\overline{1}}+\overline{Q}^2_{\overline{1}}),\hspace{0.5cm} P^1_{01}=P^2_{02}.
\end{align*}
We give preference to the $Q$'s in our notation, so we can rename the only remaining $R:=R_{1\overline{2}}$ and $S:=S_{1\overline{2}}$. We also rename $P_0:=P^1_{01}=P^2_{02}$ to emphasize that the equations for $\dd\gamma^1$ and $\dd\gamma^2$ have this term in common. Dropping the hat off of $\psi$ in \eqref{proSExis}, we summarize our results so far.
\begin{equation}\label{proSElong}
\begin{aligned}
\dd\tau&=-\psi\wedge\eta^0+\tfrac{\im}{2}\gamma^1\wedge\eta^{\overline{1}}-\tfrac{\im}{2}\gamma^{\overline{1}}\wedge\eta^1+\epsilon\tfrac{\im}{2}\gamma^2\wedge\eta^{\overline{2}}-\epsilon\tfrac{\im}{2}\gamma^{\overline{2}}\wedge\eta^2,\\
&\\
\im\dd\varrho&=-\tfrac{3\im}{2}\gamma^1\wedge\eta^{\overline{1}}-\tfrac{3\im}{2}\gamma^{\overline{1}}\wedge\eta^1+\epsilon\tfrac{\im}{2}\gamma^2\wedge\eta^{\overline{2}}+\epsilon\tfrac{\im}{2}\gamma^{\overline{2}}\wedge\eta^2+\epsilon\eta^{\overline{3}}\wedge\eta^3+F^1F^2\eta^{\overline{2}}\wedge\eta^{\overline{1}}+\overline{F}^1\overline{F}^2\eta^1\wedge\eta^2\\
&+|F^1|^2\eta^{\overline{2}}\wedge\eta^2-|F^2|^2\eta^{\overline{1}}\wedge\eta^1+(Q^1_{\overline{1}}\eta^{\overline{1}}-\overline{Q}^1_{\overline{1}}\eta^1+Q^1_{\overline{2}}\eta^{\overline{2}}-\overline{Q}^1_{\overline{2}}\eta^2)\wedge\eta^0\\
&+R\eta^{\overline{2}}\wedge\eta^1+\overline{R}\eta^{\overline{1}}\wedge\eta^2,\\
&\\
\im\dd\varsigma&=\tfrac{\im}{2}\gamma^1\wedge\eta^{\overline{1}}+\tfrac{\im}{2}\gamma^{\overline{1}}\wedge\eta^1-\epsilon\tfrac{3\im}{2}\gamma^2\wedge\eta^{\overline{2}}-\epsilon\tfrac{3\im}{2}\gamma^{\overline{2}}\wedge\eta^2+\epsilon\eta^{\overline{3}}\wedge\eta^3+F^1F^2\eta^{\overline{1}}\wedge\eta^{\overline{2}}+\overline{F}^1\overline{F}^2\eta^2\wedge\eta^1\\
&+|F^2|^2\eta^{\overline{1}}\wedge\eta^1-|F^1|^2\eta^{\overline{2}}\wedge\eta^2+(Q^2_{\overline{1}}\eta^{\overline{1}}-\overline{Q}^2_{\overline{1}}\eta^1+Q^2_{\overline{2}}\eta^{\overline{2}}-\overline{Q}^2_{\overline{2}}\eta^2)\wedge\eta^0\\
&+S\eta^{\overline{2}}\wedge\eta^1+\overline{S}\eta^{\overline{1}}\wedge\eta^2,\\
&\\
\dd\gamma^1&=-\psi\wedge\eta^1+(\tau-\im\varrho)\wedge\gamma^1-\epsilon\gamma^{\overline{2}}\wedge\eta^3+\im F^3_2\gamma^{\overline{1}}\wedge\eta^0+F^1\gamma^{\overline{1}}\wedge\eta^2-F^1\gamma^2\wedge\eta^{\overline{1}}  \\
 & -\epsilon T^3_{\overline{1}}\eta^{\overline{1}}\wedge\eta^{\overline{2}} +\epsilon \im T^3_{\overline{1}}\eta^{\overline{3}}\wedge\eta^0+ (P_{0}\eta^1+P^1_{0\overline{1}}\eta^{\overline{1}}+P^1_{0\overline{2}}\eta^{\overline{2}}+\im(\overline{Q}^1_{\overline{2}}+\overline{Q}^2_{\overline{2}})\eta^3)\wedge\eta^0\\
 &+(Q^1_{\overline{1}}\eta^{\overline{1}}+Q^1_{\overline{2}}\eta^{\overline{2}}-\overline{Q}^1_{\overline{2}}\eta^2)\wedge\eta^1+(P^1_{20}\eta^0+P^1_{2\overline{1}}\eta^{\overline{1}})\wedge\eta^2,  \\
&\\
\dd\gamma^2&=-\psi\wedge\eta^2+(\tau-\im\varsigma)\wedge\gamma^2-\gamma^{\overline{1}}\wedge\eta^3+\im F^3_1\gamma^{\overline{2}}\wedge\eta^0-F^2\gamma^1\wedge\eta^{\overline{2}}+F^2\gamma^{\overline{2}}\wedge\eta^1\\
&-T^3_{\overline{2}}\eta^{\overline{2}}\wedge\eta^{\overline{1}}+\im T^3_{\overline{2}}\eta^{\overline{3}}\wedge\eta^0+(P^2_{0\overline{1}}\eta^{\overline{1}}+P_{0}\eta^2+P^2_{0\overline{2}}\eta^{\overline{2}}+\im(\overline{Q}^1_{\overline{1}}+\overline{Q}^2_{\overline{1}})\eta^3)\wedge\eta^0\\
&+(Q^2_{\overline{1}}\eta^{\overline{1}}-\overline{Q}^2_{\overline{1}}\eta^1+Q^2_{\overline{2}}\eta^{\overline{2}})\wedge\eta^2+(P^2_{10}\eta^0+P^2_{1\overline{2}}\eta^{\overline{2}})\wedge\eta^1.     
\end{aligned}
\end{equation}

By replacing $\hat{\psi}:=\psi+\tfrac{1}{2}(P_0+\overline{P}_0)\eta^0$, we absorb the real part of $P_0$ in the equations for $\dd\gamma^1$ and $\dd\gamma^2$ without affecting the equation for $\dd\tau$. After this absorption (and dropping the hat), $\psi$ is uniquely and globally determined, and we may replace $P_0$ in our equations with $\im p_0$ where $p_0\in C^\infty(B_4^{(1)})$ is the $\mathbb{R}$-valued $-\tfrac{\im}{2}(P_0-\overline{P}_0)$. 

Note that our equations are now free of any unknown one-forms, which is just in time for us to introduce the last one we will need. It shows up in the equation for $\dd\psi$, which we obtain by differentiating $2\dd\tau$.
\begin{align*}
0
&=\dd(2\dd\tau)\\
&=\Big(-2\dd\psi-4\psi\wedge\tau+2\im\gamma^1\wedge\gamma^{\overline{1}}+\epsilon2\im\gamma^2\wedge\gamma^{\overline{2}} \dots \\
&+\im (P^1_{0\overline{2}}-\epsilon P^2_{0\overline{1}})\eta^{\overline{1}}\wedge\eta^{\overline{2}}+\im(\overline{P}^1_{0\overline{2}}-\epsilon \overline{P}^2_{0\overline{1}})\eta^{2}\wedge\eta^1+\im(P^1_{20}+\epsilon\overline{P}^2_{10})\eta^2\wedge\eta^{\overline{1}}+\im (\epsilon P^2_{10}+\overline{P}^1_{20})\eta^1\wedge\eta^{\overline{2}}\dots\\
&+\epsilon(Q^1_{\overline{1}}+Q^2_{\overline{1}})\eta^{\overline{3}}\wedge\eta^2+\epsilon(\overline{Q}^1_{\overline{1}}+\overline{Q}^2_{\overline{1}})\eta^3\wedge\eta^{\overline{2}}+(\overline{Q}^1_{\overline{2}}+\overline{Q}^2_{\overline{2}})\eta^3\wedge\eta^{\overline{1}}+(Q^1_{\overline{2}}+Q^2_{\overline{2}})\eta^{\overline{3}}\wedge\eta^1\dots\\
&+\overline{F}^3_2\gamma^{1}\wedge\eta^1+F^3_2\gamma^{\overline{1}}\wedge\eta^{\overline{1}}+\epsilon\overline{F}^3_1\gamma^{2}\wedge\eta^2+\epsilon F^3_1\gamma^{\overline{2}}\wedge\eta^{\overline{2}}\dots\\
&+\epsilon\overline{T}^3_{\overline{1}}\eta^{3}\wedge\eta^1+\epsilon T^3_{\overline{1}}\eta^{\overline{3}}\wedge\eta^{\overline{1}}+\epsilon\overline{T}^3_{\overline{2}}\eta^{3}\wedge\eta^2+\epsilon T^3_{\overline{2}}\eta^{\overline{3}}\wedge\eta^{\overline{2}}\Big)\wedge\eta^0.
\end{align*}
Thus, for some $\mathbb{R}$-valued $\zeta\in\Omega^1(B_4^{(1)})$, we have a final structure equation 
\begin{equation}\label{dpsizeta}
\begin{aligned}
\dd\psi&=-2\psi\wedge\tau+\im\gamma^1\wedge\gamma^{\overline{1}}+\epsilon\im\gamma^2\wedge\gamma^{\overline{2}}+\zeta\wedge\eta^0  \\
&+\tfrac{\im}{2} (P^1_{0\overline{2}}-\epsilon P^2_{0\overline{1}})\eta^{\overline{1}}\wedge\eta^{\overline{2}}+\tfrac{\im}{2}(\overline{P}^1_{0\overline{2}}-\epsilon\overline{P}^2_{0\overline{1}})\eta^{2}\wedge\eta^1+\tfrac{\im}{2}(P^1_{20}+\epsilon\overline{P}^2_{10})\eta^2\wedge\eta^{\overline{1}}+\tfrac{\im}{2} (\epsilon P^2_{10}+\overline{P}^1_{20})\eta^1\wedge\eta^{\overline{2}}\\
&+\epsilon\tfrac{1}{2}(Q^1_{\overline{1}}+Q^2_{\overline{1}})\eta^{\overline{3}}\wedge\eta^2+\epsilon\tfrac{1}{2}(\overline{Q}^1_{\overline{1}}+\overline{Q}^2_{\overline{1}})\eta^3\wedge\eta^{\overline{2}}+\tfrac{1}{2}(\overline{Q}^1_{\overline{2}}+\overline{Q}^2_{\overline{2}})\eta^3\wedge\eta^{\overline{1}}+\tfrac{1}{2}(Q^1_{\overline{2}}+Q^2_{\overline{2}})\eta^{\overline{3}}\wedge\eta^1\\
&+\tfrac{1}{2}\overline{F}^3_2\gamma^{1}\wedge\eta^1+\tfrac{1}{2}F^3_2\gamma^{\overline{1}}\wedge\eta^{\overline{1}}+\epsilon\tfrac{1}{2}\overline{F}^3_1\gamma^{2}\wedge\eta^2+\epsilon\tfrac{1}{2}F^3_1\gamma^{\overline{2}}\wedge\eta^{\overline{2}}\\
&+\epsilon\tfrac{1}{2}\overline{T}^3_{\overline{1}}\eta^{3}\wedge\eta^1+\epsilon\tfrac{1}{2}T^3_{\overline{1}}\eta^{\overline{3}}\wedge\eta^{\overline{1}}+\epsilon\tfrac{1}{2}\overline{T}^3_{\overline{2}}\eta^{3}\wedge\eta^2+\epsilon\tfrac{1}{2}T^3_{\overline{2}}\eta^{\overline{3}}\wedge\eta^{\overline{2}}.
\end{aligned}
\end{equation}
In order to expand $\zeta$, we first revisit 
\begin{equation*}
\begin{aligned}
0&=\dd^2\eta^3\\
&\equiv(\dd F^3_2-2F^3_2(\tau-\im\varrho)+2\im F^1\gamma^2+(\im(Q^1_{\overline{1}}-Q^2_{\overline{1}})-F^3_1\overline{F}^2)\eta^1)\wedge\eta^{\overline{1}}\wedge\eta^2&\mod\{\eta^0,\eta^3,\eta^{\overline{2}}\},
\end{aligned}
\end{equation*}
which implies 
\begin{equation}\label{dF32mod}
\begin{aligned}
\dd F^3_2&\equiv2F^3_2(\tau-\im\varrho)-2\im F^1\gamma^2-(\im(Q^1_{\overline{1}}-Q^2_{\overline{1}})-F^3_1\overline{F}^2)\eta^1&&\mod\{\eta^0,\eta^2,\eta^3,\eta^{\overline{1}},\eta^{\overline{2}}\}.\\
\end{aligned}
\end{equation}
Now differentiate $\dd\gamma^1$ and reduce by all of the $\eta$'s except $\eta^0,\eta^1$.
\begin{align*}
0&=\dd^2\gamma^1\\
&\equiv-\zeta\wedge\eta^0\wedge\eta^1+\im(\dd F^3_2-2F^3_2(\tau-\im\varrho)+2\im F^1\gamma^2)\wedge\gamma^{\overline{1}}\wedge\eta^0 \\
&+(\im\dd p_{0}-4\im p_{0}\tau+Q^1_{\overline{1}}\gamma^{\overline{1}}+\overline{Q}^1_{\overline{1}}\gamma^1+\overline{Q}^2_{\overline{2}}\gamma^2+(Q^1_{\overline{2}}-\im\overline{F}^1F^3_2)\gamma^{\overline{2}})\wedge\eta^1\wedge\eta^0&&\mod\{\eta^2,\eta^3,\eta^{\overline{1}},\eta^{\overline{2}},\eta^{\overline{3}}\}.
\end{align*}
Plugging in \eqref{dF32mod} then yields 
\begin{align*} 
\dd^2\gamma^1&\equiv(-\im\dd p_0+4\im p_0\tau-\zeta- \overline{Q}^1_{\overline{1}}\gamma^{1}-(Q^2_{\overline{1}}-\im F^3_1\overline{F}^2)\gamma^{\overline{1}}- \overline{Q}^2_{\overline{2}}\gamma^{2}-(Q^1_{\overline{2}}-\im \overline{F}^1 F^3_2)\gamma^{\overline{2}})\wedge\eta^{0}\wedge\eta^{1} \\
&\mod\{\eta^2,\eta^3,\eta^{\overline{1}},\eta^{\overline{2}},\eta^{\overline{3}}\},\\
\Rightarrow\dd^2\gamma^{\overline{1}}&\equiv(\im\dd p_0-4\im p_0\tau-\zeta- Q^1_{\overline{1}}\gamma^{\overline{1}}-(\overline{Q}^2_{\overline{1}}+\im\overline{F}^3_1F^2)\gamma^{1}- Q^2_{\overline{2}}\gamma^{\overline{2}}-(\overline{Q}^1_{\overline{2}}+\im F^1\overline{F}^3_2)\gamma^{2})\wedge\eta^{0}\wedge\eta^{\overline{1}} \\
&\mod\{\eta^1,\eta^2,\eta^3,\eta^{\overline{2}},\eta^{\overline{3}}\},
\end{align*}
where we have used the fact that $\zeta$ and $p_0$ are $\mathbb{R}$-valued. We exploit this further to calculate 
\begin{align*}
 0&\equiv \dd^2\gamma^1\wedge\eta^{\overline{1}}-\dd^2\gamma^{\overline{1}}\wedge\eta^1&&\mod\{\eta^2,\eta^3,\eta^{\overline{2}},\eta^{\overline{3}}\}\\
 &\equiv \Big(-2\zeta-(\overline{Q}^1_{\overline{1}}+\overline{Q}^2_{\overline{1}}+\im\overline{F}^3_1F^2)\gamma^{1}-(\overline{Q}^2_{\overline{2}}+\overline{Q}^1_{\overline{2}}+\im F^1\overline{F}^3_2)\gamma^{2}\dots\\
 &-(Q^1_{\overline{1}}+Q^2_{\overline{1}}-\im F^3_1\overline{F}^2)\gamma^{\overline{1}}-(Q^2_{\overline{2}}+Q^1_{\overline{2}}-\im\overline{F}^1F^3_2)\gamma^{\overline{2}}\Big)\wedge\eta^0\wedge\eta^1\wedge\eta^{\overline{1}}&&\mod\{\eta^0,\eta^1,\eta^2,\eta^3,\eta^{\overline{1}},\eta^{\overline{2}},\eta^{\overline{3}}\},
\end{align*}
by which we find 
\begin{equation}\label{zeta}
\begin{aligned}
 \zeta&\equiv-\tfrac{1}{2}(\overline{Q}^1_{\overline{1}}+\overline{Q}^2_{\overline{1}}+\im\overline{F}^3_1F^2)\gamma^{1}-\tfrac{1}{2}(\overline{Q}^2_{\overline{2}}+\overline{Q}^1_{\overline{2}}+\im F^1\overline{F}^3_2)\gamma^{2}\\
 &-\tfrac{1}{2}(Q^1_{\overline{1}}+Q^2_{\overline{1}}-\im F^3_1\overline{F}^2)\gamma^{\overline{1}}-\tfrac{1}{2}(Q^2_{\overline{2}}+Q^1_{\overline{2}}-\im\overline{F}^1F^3_2)\gamma^{\overline{2}}&&\mod\{\eta^0,\eta^1,\eta^2,\eta^3,\eta^{\overline{1}},\eta^{\overline{2}},\eta^{\overline{3}}\}.
\end{aligned}
\end{equation}
Thus, if we define $\xi\in\Omega^1(B_4^{(1)})$ to be
\begin{align*}
 \xi:=\zeta&+\tfrac{1}{2}(\overline{Q}^1_{\overline{1}}+\overline{Q}^2_{\overline{1}}+\im\overline{F}^3_1F^2)\gamma^{1}+\tfrac{1}{2}(\overline{Q}^2_{\overline{2}}+\overline{Q}^1_{\overline{2}}+\im F^1\overline{F}^3_2)\gamma^{2}\\
 &+\tfrac{1}{2}(Q^1_{\overline{1}}+Q^2_{\overline{1}}-\im F^3_1\overline{F}^2)\gamma^{\overline{1}}+\tfrac{1}{2}(Q^2_{\overline{2}}+Q^1_{\overline{2}}-\im\overline{F}^1F^3_2)\gamma^{\overline{2}},
\end{align*}
then by \eqref{zeta} we know 
\begin{align*}
 \xi&\equiv0\mod\{\eta^0,\eta^1,\eta^2,\eta^3,\eta^{\overline{1}},\eta^{\overline{2}},\eta^{\overline{3}}\},
\end{align*}
which along with the fact that $\xi$ is $\mathbb{R}$-valued (and wedged against $\eta^0$) means we can expand 
\begin{equation}\label{xi}
\begin{aligned}
 \xi=O_1\eta^1+\overline{O}_1\eta^{\overline{1}}+O_2\eta^2+\overline{O}_2\eta^{\overline{2}}+O_3\eta^3+\overline{O}_3\eta^{\overline{3}},
\end{aligned}
\end{equation}
for some $O\in C^\infty(B_4^{(1)},\mathbb{C})$. We incorporate the expressions \eqref{zeta} and \eqref{xi} into our equation \eqref{dpsizeta} for $\dd\psi$, which we append to our list of completely determined structure equations
\begin{equation}\label{SE}
\begin{aligned}
 \dd\eta^0&=-2\tau\wedge\eta^0+\im\eta^1\wedge\eta^{\overline{1}}+\epsilon\im\eta^2\wedge\eta^{\overline{2}},\\
 \dd\eta^1&=-\gamma^1\wedge\eta^0-(\tau+\im\varrho)\wedge\eta^1+\epsilon\eta^3\wedge\eta^{\overline{2}}+F^1\eta^{\overline{1}}\wedge\eta^2,\\
 \dd\eta^2&=-\gamma^2\wedge\eta^0-(\tau+\im\varsigma)\wedge\eta^2+\eta^3\wedge\eta^{\overline{1}}+F^2\eta^{\overline{2}}\wedge\eta^1,\\
 \dd\eta^3&=-\im\gamma^2\wedge\eta^1-\im\gamma^1\wedge\eta^2-(\im\varrho+\im\varsigma)\wedge\eta^3+T^3_{\overline{1}}\eta^{\overline{1}}\wedge\eta^0+T^3_{\overline{2}}\eta^{\overline{2}}\wedge\eta^0+F^3_1\eta^{\overline{2}}\wedge\eta^1+F^3_2\eta^{\overline{1}}\wedge\eta^2  ,  \\
\\
\dd\tau&=-\psi\wedge\eta^0+\tfrac{\im}{2}\gamma^1\wedge\eta^{\overline{1}}-\tfrac{\im}{2}\gamma^{\overline{1}}\wedge\eta^1+\epsilon\tfrac{\im}{2}\gamma^2\wedge\eta^{\overline{2}}-\epsilon\tfrac{\im}{2}\gamma^{\overline{2}}\wedge\eta^2,\\
\\
\im\dd\varrho&=-\tfrac{3\im}{2}\gamma^1\wedge\eta^{\overline{1}}-\tfrac{3\im}{2}\gamma^{\overline{1}}\wedge\eta^1+\epsilon\tfrac{\im}{2}\gamma^2\wedge\eta^{\overline{2}}+\epsilon\tfrac{\im}{2}\gamma^{\overline{2}}\wedge\eta^2+\epsilon\eta^{\overline{3}}\wedge\eta^3+F^1F^2\eta^{\overline{2}}\wedge\eta^{\overline{1}}+\overline{F}^1\overline{F}^2\eta^1\wedge\eta^2\\
&+|F^1|^2\eta^{\overline{2}}\wedge\eta^2-|F^2|^2\eta^{\overline{1}}\wedge\eta^1+(Q^1_{\overline{1}}\eta^{\overline{1}}-\overline{Q}^1_{\overline{1}}\eta^1+Q^1_{\overline{2}}\eta^{\overline{2}}-\overline{Q}^1_{\overline{2}}\eta^2)\wedge\eta^0\\
&+R\eta^{\overline{2}}\wedge\eta^1+\overline{R}\eta^{\overline{1}}\wedge\eta^2,\\
\\
\im\dd\varsigma&=\tfrac{\im}{2}\gamma^1\wedge\eta^{\overline{1}}+\tfrac{\im}{2}\gamma^{\overline{1}}\wedge\eta^1-\epsilon\tfrac{3\im}{2}\gamma^2\wedge\eta^{\overline{2}}-\epsilon\tfrac{3\im}{2}\gamma^{\overline{2}}\wedge\eta^2+\epsilon\eta^{\overline{3}}\wedge\eta^3+F^1F^2\eta^{\overline{1}}\wedge\eta^{\overline{2}}+\overline{F}^1\overline{F}^2\eta^2\wedge\eta^1\\
&+|F^2|^2\eta^{\overline{1}}\wedge\eta^1-|F^1|^2\eta^{\overline{2}}\wedge\eta^2+(Q^2_{\overline{1}}\eta^{\overline{1}}-\overline{Q}^2_{\overline{1}}\eta^1+Q^2_{\overline{2}}\eta^{\overline{2}}-\overline{Q}^2_{\overline{2}}\eta^2)\wedge\eta^0\\
&+S\eta^{\overline{2}}\wedge\eta^1+\overline{S}\eta^{\overline{1}}\wedge\eta^2,\\
\\
\dd\gamma^1&=-\psi\wedge\eta^1+(\tau-\im\varrho)\wedge\gamma^1-\epsilon\gamma^{\overline{2}}\wedge\eta^3+\im F^3_2\gamma^{\overline{1}}\wedge\eta^0+F^1\gamma^{\overline{1}}\wedge\eta^2-F^1\gamma^2\wedge\eta^{\overline{1}}  \\
 & -\epsilon T^3_{\overline{1}}\eta^{\overline{1}}\wedge\eta^{\overline{2}} +\epsilon\im T^3_{\overline{1}}\eta^{\overline{3}}\wedge\eta^0+ (\im p_{0}\eta^1+P^1_{0\overline{1}}\eta^{\overline{1}}+P^1_{0\overline{2}}\eta^{\overline{2}}+\im(\overline{Q}^1_{\overline{2}}+\overline{Q}^2_{\overline{2}})\eta^3)\wedge\eta^0\\
 &+(Q^1_{\overline{1}}\eta^{\overline{1}}+Q^1_{\overline{2}}\eta^{\overline{2}}-\overline{Q}^1_{\overline{2}}\eta^2)\wedge\eta^1+(P^1_{20}\eta^0+P^1_{2\overline{1}}\eta^{\overline{1}})\wedge\eta^2,  \\
\\
\dd\gamma^2&=-\psi\wedge\eta^2+(\tau-\im\varsigma)\wedge\gamma^2-\gamma^{\overline{1}}\wedge\eta^3+\im F^3_1\gamma^{\overline{2}}\wedge\eta^0-F^2\gamma^1\wedge\eta^{\overline{2}}+F^2\gamma^{\overline{2}}\wedge\eta^1\\
&-T^3_{\overline{2}}\eta^{\overline{2}}\wedge\eta^{\overline{1}}+\im T^3_{\overline{2}}\eta^{\overline{3}}\wedge\eta^0+(P^2_{0\overline{1}}\eta^{\overline{1}}+\im p_{0}\eta^2+P^2_{0\overline{2}}\eta^{\overline{2}}+\im(\overline{Q}^1_{\overline{1}}+\overline{Q}^2_{\overline{1}})\eta^3)\wedge\eta^0\\
&+(Q^2_{\overline{1}}\eta^{\overline{1}}-\overline{Q}^2_{\overline{1}}\eta^1+Q^2_{\overline{2}}\eta^{\overline{2}})\wedge\eta^2+(P^2_{10}\eta^0+P^2_{1\overline{2}}\eta^{\overline{2}})\wedge\eta^1,     \\
\\
\dd\psi&=-2\psi\wedge\tau+\im\gamma^1\wedge\gamma^{\overline{1}}+\epsilon\im\gamma^2\wedge\gamma^{\overline{2}}+(O_1\eta^1+\overline{O}_1\eta^{\overline{1}}+O_2\eta^2+\overline{O}_2\eta^{\overline{2}}+O_3\eta^3+\overline{O}_3\eta^{\overline{3}})\wedge\eta^0  \\
&-\tfrac{1}{2}(\overline{Q}^1_{\overline{1}}+\overline{Q}^2_{\overline{1}}+\im\overline{F}^3_1F^2)\gamma^{1}\wedge\eta^0-\tfrac{1}{2}(\overline{Q}^2_{\overline{2}}+\overline{Q}^1_{\overline{2}}+\im F^1\overline{F}^3_2)\gamma^{2}\wedge\eta^0\\
&-\tfrac{1}{2}(Q^1_{\overline{1}}+Q^2_{\overline{1}}-\im F^3_1\overline{F}^2)\gamma^{\overline{1}}\wedge\eta^0-\tfrac{1}{2}(Q^2_{\overline{2}}+Q^1_{\overline{2}}-\im\overline{F}^1F^3_2)\gamma^{\overline{2}}\wedge\eta^0\\
&+\tfrac{\im}{2} (P^1_{0\overline{2}}-\epsilon P^2_{0\overline{1}})\eta^{\overline{1}}\wedge\eta^{\overline{2}}+\tfrac{\im}{2}(\overline{P}^1_{0\overline{2}}-\epsilon\overline{P}^2_{0\overline{1}})\eta^{2}\wedge\eta^1+\tfrac{\im}{2}(P^1_{20}+\epsilon\overline{P}^2_{10})\eta^2\wedge\eta^{\overline{1}}+\tfrac{\im}{2} (\epsilon P^2_{10}+\overline{P}^1_{20})\eta^1\wedge\eta^{\overline{2}}\\
&+\epsilon\tfrac{1}{2}(Q^1_{\overline{1}}+Q^2_{\overline{1}})\eta^{\overline{3}}\wedge\eta^2+\epsilon\tfrac{1}{2}(\overline{Q}^1_{\overline{1}}+\overline{Q}^2_{\overline{1}})\eta^3\wedge\eta^{\overline{2}}+\tfrac{1}{2}(\overline{Q}^1_{\overline{2}}+\overline{Q}^2_{\overline{2}})\eta^3\wedge\eta^{\overline{1}}+\tfrac{1}{2}(Q^1_{\overline{2}}+Q^2_{\overline{2}})\eta^{\overline{3}}\wedge\eta^1\\
&+\tfrac{1}{2}\overline{F}^3_2\gamma^{1}\wedge\eta^1+\tfrac{1}{2}F^3_2\gamma^{\overline{1}}\wedge\eta^{\overline{1}}+\epsilon\tfrac{1}{2}\overline{F}^3_1\gamma^{2}\wedge\eta^2+\epsilon\tfrac{1}{2}F^3_1\gamma^{\overline{2}}\wedge\eta^{\overline{2}}\\
&+\epsilon\tfrac{1}{2}\overline{T}^3_{\overline{1}}\eta^{3}\wedge\eta^1+\epsilon\tfrac{1}{2}T^3_{\overline{1}}\eta^{\overline{3}}\wedge\eta^{\overline{1}}+\epsilon\tfrac{1}{2}\overline{T}^3_{\overline{2}}\eta^{3}\wedge\eta^2+\epsilon\tfrac{1}{2}T^3_{\overline{2}}\eta^{\overline{3}}\wedge\eta^{\overline{2}}.
\end{aligned}
\end{equation}

Let $\pi:=\underline{\pi}\circ\hat{\pi}$ so we have the bundle $\pi:B_4^{(1)}\to M$. At this point, the coframing of $B_4^{(1)}$ given by the five $\mathbb{R}$-valued forms $\eta^0,\tau,\varrho,\varsigma,\psi$ and the real and imaginary parts of the five $\mathbb{C}$-valued forms $\eta^1,\eta^2,\eta^3,\gamma^1,\gamma^2$ is uniquely and globally determined by the structure equations \eqref{SE}. Thus, this coframing constitutes a solution in the sense of E. Cartan to the equivalence problem for 7-dimensional, 2-nondegenerate CR manifolds whose cubic form is of conformal unitary type.

\vspace{\baselineskip}\section{The Parallelism}\label{parallelism}

\subsection{Homogeneous Model}\label{homogmodel}

Consider $\mathbb{C}^4$ with its standard basis $\underline{\tv}=(\underline{\tv}_1,\underline{\tv}_2,\underline{\tv}_3,\underline{\tv}_4)$ of column vectors and corresponding complex, linear coordinates $z^1,z^2,z^3,z^4$. A basis $\tv=(\tv_1,\tv_2,\tv_3,\tv_4)$ of column vectors for $\mathbb{C}^4$ will be called an \emph{oriented frame} if 
\begin{align}\label{orframe}
 \tv_1\wedge \tv_2\wedge \tv_3\wedge \tv_4=\underline{\tv}_1\wedge \underline{\tv}_2\wedge \underline{\tv}_3\wedge \underline{\tv}_4.
\end{align}
Let $B_\mathbb{C}^{(1)}$ denote the set of oriented frames, and observe that fixing an identity element $\underline{\tv}$ determines an isomorphism $B_\mathbb{C}^{(1)}\cong SL_4\mathbb{C}$ whereby the oriented frame $\tv$ is identified with the $4\times4$ matrix $[\tv_1,\tv_2,\tv_3,\tv_4]$. If $Gr(2,4)\subset\mathbb{P}(\Lambda^2\mathbb{C}^4)$ denotes the Grassmannian manifold of 2-planes in $\mathbb{C}^4$, then $B_\mathbb{C}^{(1)}$ fibers over $Gr(2,4)$ via the projection map 
\begin{align*}
 \pi(\tv)=\llbracket \tv_1\wedge \tv_2\rrbracket, 
\end{align*}
where the bold brackets denote the projective equivalence class \`{a} la Pl\"{u}cker embedding. This fibration exhibits $Gr(2,4)$ as the homogeneous quotient of $SL_4\mathbb{C}$ by the parabolic subgroup $P\subset SL_4\mathbb{C}$ represented as all matrices of the form 
\begin{align*}
 P=\left[\begin{array}{cccc}*&*&*&*\\ *&*&*&*\\0&0&*&*\\0&0&*&*\end{array}\right],
\end{align*}
i.e., the stabilizer subgroup of the plane spanned by $\underline{\tv}_1,\underline{\tv}_2$.

Let $\epsilon,\delta_\epsilon$ be as in \eqref{epsilon_delta}, and introduce a Hermitian inner product $h$ of signature $(2+\delta_\epsilon,2-\delta_\epsilon)$ on $\mathbb{C}^4$ given in our linear coordinates by 
\begin{align*}
 h(z,w)=z^1\overline{w}^4+z^4\overline{w}^1-\epsilon z^2\overline{w}^2+ z^3\overline{w}^3.
\end{align*}
Now $SU_\star:=SU(2+\delta_\epsilon,2-\delta_\epsilon)\subset SL_4\mathbb{C}$ denotes the subgroup $\{A\in SL_4\mathbb{C}\ |\ h(Az,Aw)=h(z,w)\ \forall z,w\in\mathbb{C}^4\}$, and $Gr(2,4)$ decomposes into $SU_\star$ orbits as follows. Let $\Pi\in Gr(2,4)$. In the $SU(2,2)$ case, $h|_\Pi$ has one of the signatures $(2,0)$,$(0,2)$,$(1,1)$,$(1,0)$,$(0,1)$,$(0,0)$. In the $SU(3,1)$ case, $h|_\Pi$ has one of the signatures $(2,0)$,$(1,1)$,$(1,0)$. In both cases, we let $M_\star$ denote $SU_\star \cdot\llbracket\underline{\tv}_1\wedge\underline{\tv}_2\rrbracket$, which is an orbit of codimension-one in $Gr(2,4)$ where $h|_\Pi$ has signature $(1,0)$. 

An oriented frame $\tv\in B_\mathbb{C}^{(1)}$ will be called a \emph{Hermitian frame} if 
\begin{align}\label{hermframe}
 [h(\tv_i,\tv_j)]_{i,j=1}^4=\left[\begin{array}{crcc}0&0&0&1\\0&-\epsilon&0&0\\0&0&1&0\\1&0&0&0\end{array}\right].
\end{align}
In particular, $\underline{\tv}$ is a Hermitian frame. Let $B^{(1)}\subset B_\mathbb{C}^{(1)}$ be the subset of Hermitian frames, and note that fixing $\underline{\tv}$ once again determines an isomorphism $B^{(1)}\cong SU_\star $ in the same manner as before. The most general transformation of $\underline{\tv}$ which preserves the 2-plane $\llbracket\underline{\tv}_1\wedge\underline{\tv}_2\rrbracket\in Gr(2,4)$ and yields a new Hermitian frame $\tv$ is given by 
\begin{align*}
\tv_1&=\tfrac{1}{t}\e^{\nicefrac{\im}{4}(-r+s)}\underline{\tv}_1,\\
\tv_2&=c^2\e^{-\nicefrac{\im}{4}(r+3s)}\underline{\tv}_1+\e^{-\nicefrac{\im}{4}(r+3s)}\underline{\tv}_2,\\
\tv_3&=-\overline{c}^1\e^{\nicefrac{\im}{4}(3r+s)}\underline{\tv}_1+\e^{\nicefrac{\im}{4}(3r+s)}\underline{\tv}_3,\\
\tv_4&=t\e^{\nicefrac{\im}{4}(-r+s)}(\im y-\tfrac{1}{2}(|c^1|^2-\epsilon|c^2|^2))\underline{\tv}_1+\epsilon\overline{c}^2t\e^{\nicefrac{\im}{4}(-r+s)}\underline{\tv}_2+c^1t\e^{\nicefrac{\im}{4}(-r+s)}\underline{\tv}_3+t\e^{\nicefrac{\im}{4}(-r+s)}\underline{\tv}_4,
\end{align*}
for $r,s,t,y\in\mathbb{R}$ ($t\neq0$) and $c^1,c^2\in\mathbb{C}$. Thus we see that the eight-dimensional Lie group $P_\star:=P\cap SU_\star $ is parameterized by
\begin{align}\label{Pstar}
 \left[\begin{array}{cccc}
\tfrac{1}{t}\e^{\nicefrac{\im}{4}(-r+s)}&c^2\e^{-\nicefrac{\im}{4}(r+3s)}&-\overline{c}^1\e^{\nicefrac{\im}{4}(3r+s)}&t\e^{\nicefrac{\im}{4}(-r+s)}(\im y-\tfrac{1}{2}(|c^1|^2-\epsilon|c^2|^2))\\&&&\\
0&\e^{-\nicefrac{\im}{4}(r+3s)}&0 &\epsilon\overline{c}^2t\e^{\nicefrac{\im}{4}(-r+s)}\\&&&\\
0&0&\e^{\nicefrac{\im}{4}(3r+s)}&c^1t\e^{\nicefrac{\im}{4}(-r+s)}\\&&&\\
0&0&0&t\e^{\nicefrac{\im}{4}(-r+s)}\end{array}\right].
\end{align}
The restriction of the projection $\pi$ to $B^{(1)}$ now determines a fibration over our model space $M_\star$ by which we realize $M_\star$ as the homogeneous quotient $SU_\star /P_\star$. Observe that our parameterization of $P_\star$ may be decomposed into the product $P_\star=P_\star^2P_\star^1P_\star^0$ where the factors are matrices of the form  
\begin{equation}\label{Pgroups}
\begin{aligned}
&P_\star^2=\left[\begin{array}{cccc}
1&0&0&\im y\\
0&1&0 &0\\
0&0&1&0\\
0&0&0&1\end{array}\right],
&P_\star^1=\left[\begin{array}{cccc}
1&c^2&-\overline{c}^1&-\tfrac{1}{2}(|c^1|^2-\epsilon|c^2|^2)\\
0&1&0 &\epsilon\overline{c}^2\\
0&0&1&c^1\\
0&0&0&1\end{array}\right],\\
& &P_\star^0=\left[\begin{array}{cccc}
\tfrac{1}{t}\e^{\nicefrac{\im}{4}(-r+s)}&0&0&0\\
0&\e^{-\nicefrac{\im}{4}(r+3s)}&0 &0\\
0&0&\e^{\nicefrac{\im}{4}(3r+s)}&0\\
0&0&0&t\e^{\nicefrac{\im}{4}(-r+s)}\end{array}\right],
\end{aligned}
\end{equation}
with matrix entries as above. Each of $P_\star^0,P_\star^2$, and the product $P_\star^2P_\star^1$ define subgroups of $SU_\star$, and there is a corresponding tower of fibrations 
\begin{align}\label{SUfibers}
\xymatrix{P_\star^2\ar@{->}[r] & SU_\star \ar@{->}[d]\\
          (P_\star^2P_\star^1)/P_\star^2\ar@{->}[r] & SU_\star /P_\star^2\ar@{->}[d]\\
          P_\star^0\ar@{->}[r] & SU_\star /(P_\star^2P_\star^1)\ar@{->}[d]\\
          & SU_\star /P_\star}.
\end{align}

The four vector-valued functions $B^{(1)}\to\mathbb{C}^4$ given by $\tv\mapsto \tv_j$ ($1\leq j\leq4)$ may be differentiated to obtain one-forms $\omega^i_j\in\Omega^1(B^{(1)},\mathbb{C})$ which we express by 
\begin{align*}
 \dd \tv_j=\tv_i\omega^i_j,
\end{align*}
so that $\omega:=[\omega^i_j]$ is the Maurer-Cartan form of $SU_\star $. Differentiating \eqref{orframe} will show that $\text{trace}(\omega)=0$, while differentiating \eqref{hermframe} reveals
\begin{align*}
\left[\begin{array}{cccc}
 \omega^{\overline{4}}_{\overline{1}}& -\epsilon\omega^{\overline{2}}_{\overline{1}}& \omega^{\overline{3}}_{\overline{1}}& \omega^{\overline{1}}_{\overline{1}}\\ &&&\\
 \omega^{\overline{4}}_{\overline{2}}& -\epsilon\omega^{\overline{2}}_{\overline{2}}& \omega^{\overline{3}}_{\overline{2}}& \omega^{\overline{1}}_{\overline{2}}\\&&&\\
 \omega^{\overline{4}}_{\overline{3}}& -\epsilon\omega^{\overline{2}}_{\overline{3}}& \omega^{\overline{3}}_{\overline{3}}& \omega^{\overline{1}}_{\overline{3}}\\&&&\\
 \omega^{\overline{4}}_{\overline{4}}& -\epsilon\omega^{\overline{2}}_{\overline{4}}& \omega^{\overline{3}}_{\overline{4}}& \omega^{\overline{1}}_{\overline{4}}\end{array}\right]
 +
\left[\begin{array}{rrrr} 
 \omega^4_1&    \omega^4_2&    \omega^4_3&    \omega^4_4\\&&&\\
 -\epsilon\omega^2_1&   -\epsilon\omega^2_2&    -\epsilon\omega^2_3&    -\epsilon\omega^2_4\\&&&\\
 \omega^3_1& \omega^3_2& \omega^3_3& \omega^3_4\\&&&\\
    \omega^1_1&    \omega^1_2&    \omega^1_3&    \omega^1_4\end{array}\right]=0,
\end{align*}
which is simply to say that $\omega$ takes values in the Lie algebra $\mathfrak{su}_\star$ of $SU_\star$. These conditions show that if we let 
\begin{align*}
&\eta^0:=-\text{Im}(\omega^4_1),
&&\eta^1:=\omega^3_1,
&&&\eta^2:=\omega^4_2,
&&&&\eta^3:=\omega^3_2,
&&&&&\tau:=\text{Re}(\omega^1_1),\\
&\im\varrho:=\tfrac{1}{2}(3\omega^3_3+\omega^2_2),
&&\im\varsigma:=-\tfrac{1}{2}(3\omega^2_2+\omega^3_3),
&&&\im\gamma^1:=\omega_4^3,
&&&&-\im\gamma^2:=\omega^1_2,
&&&&&\psi:=-\text{Im}(\omega^1_4),
\end{align*}
then we can write 
\begin{align}\label{omega}
\omega=\left[\begin{array}{cccc}
 -\tau-\im\tfrac{1}{4}\varrho+\im\tfrac{1}{4}\varsigma        &-\im\gamma^2       &-\im\gamma^{\overline{1}}        &-\im\psi\\ 
 & & & \\
 -\epsilon\eta^{\overline{2}} &-\im\tfrac{1}{4}\varrho-\im\tfrac{3}{4}\varsigma  &\epsilon\eta^{\overline{3}}  &-\epsilon\im\gamma^{\overline{2}}\\
 & & & \\
 \eta^1        &\eta^3      &\im\tfrac{3}{4}\varrho+\im\tfrac{1}{4}\varsigma    &\im\gamma^1\\
 & & & \\
 -\im\eta^0       &\eta^2      &\eta^{\overline{1}}  &\tau-\im\tfrac{1}{4}\varrho+\im\tfrac{1}{4}\varsigma
 \end{array}\right],
\end{align}
and the $SU_\star $ Maurer-Cartan equations $\dd\omega+\omega\wedge\omega=0$ read 
\begin{equation}\label{MC}
\begin{aligned}
\dd\eta^0&=-2\tau\wedge\eta^0+\im\eta^1\wedge\eta^{\overline{1}}+\epsilon\im\eta^2\wedge\eta^{\overline{2}},\\
 \dd\eta^1&=-\gamma^1\wedge\eta^0-(\tau+\im\varrho)\wedge\eta^1+\epsilon\eta^3\wedge\eta^{\overline{2}},\\
 \dd\eta^2&=-\gamma^2\wedge\eta^0-(\tau+\im\varsigma)\wedge\eta^2+\eta^3\wedge\eta^{\overline{1}},\\
 \dd\eta^3&=-\im\gamma^2\wedge\eta^1-\im\gamma^1\wedge\eta^2-(\im\varrho+\im\varsigma)\wedge\eta^3,\\
\dd\tau&=-\psi\wedge\eta^0+\tfrac{\im}{2}\gamma^1\wedge\eta^{\overline{1}}-\tfrac{\im}{2}\gamma^{\overline{1}}\wedge\eta^1+\epsilon\tfrac{\im}{2}\gamma^2\wedge\eta^{\overline{2}}-\epsilon\tfrac{\im}{2}\gamma^{\overline{2}}\wedge\eta^2,\\
\im\dd\varrho&=-\tfrac{3\im}{2}\gamma^1\wedge\eta^{\overline{1}}-\tfrac{3\im}{2}\gamma^{\overline{1}}\wedge\eta^1+\epsilon\tfrac{\im}{2}\gamma^2\wedge\eta^{\overline{2}}+\epsilon\tfrac{\im}{2}\gamma^{\overline{2}}\wedge\eta^2+\epsilon\eta^{\overline{3}}\wedge\eta^3,\\
\im\dd\varsigma&=\tfrac{\im}{2}\gamma^1\wedge\eta^{\overline{1}}+\tfrac{\im}{2}\gamma^{\overline{1}}\wedge\eta^1-\epsilon\tfrac{3\im}{2}\gamma^2\wedge\eta^{\overline{2}}-\epsilon\tfrac{3\im}{2}\gamma^{\overline{2}}\wedge\eta^2+\epsilon\eta^{\overline{3}}\wedge\eta^3,\\
\dd\gamma^1&=-\psi\wedge\eta^1+(\tau-\im\varrho)\wedge\gamma^1-\epsilon\gamma^{\overline{2}}\wedge\eta^3,\\
\dd\gamma^2&=-\psi\wedge\eta^2+(\tau-\im\varsigma)\wedge\gamma^2-\gamma^{\overline{1}}\wedge\eta^3,\\
\dd\psi&=-2\psi\wedge\tau+\im\gamma^1\wedge\gamma^{\overline{1}}+\epsilon\im\gamma^2\wedge\gamma^{\overline{2}}.
\end{aligned}
\end{equation}
Observe that the equations \eqref{MC} show 
\begin{align*}
 \dd(\psi-2\tau+\eta^0)&=(\psi-2\tau+\eta^0)\wedge(\eta^0-\psi)+\im(\gamma^1-\eta^1)\wedge(\gamma^{\overline{1}}-\eta^{\overline{1}})+\epsilon\im(\gamma^2-\eta^2)\wedge(\gamma^{\overline{2}}-\eta^{\overline{2}}),\\
 \dd(\gamma^1-\eta^1)&=-(\psi-2\tau+\eta^0)\wedge\eta^1+(\gamma^1-\eta^1)\wedge\eta^0+(\tau-\im\varrho)\wedge(\gamma^1-\eta^1)-\epsilon(\gamma^{\overline{2}}-\eta^{\overline{2}})\wedge\eta^3,\\
 \dd(\gamma^2-\eta^2)&=-(\psi-2\tau+\eta^0)\wedge\eta^2+(\gamma^2-\eta^2)\wedge\eta^0+(\tau-\im\varsigma)\wedge(\gamma^2-\eta^2)-(\gamma^{\overline{1}}-\eta^{\overline{1}})\wedge\eta^3,
\end{align*}
which proves that the Pfaffian system $\mathcal{I}:=\{\psi-2\tau+\eta^0,\gamma^1-\eta^1,\gamma^2-\eta^2,\gamma^{\overline{1}}-\eta^{\overline{1}},\gamma^{\overline{2}}-\eta^{\overline{2}}\}$ on $B^{(1)}$ is Frobenius. We let $B_\mathcal{I}$ denote the maximal integral manifold of $\mathcal{I}$ that contains $\underline{\tv}$, with $\iota:B_\mathcal{I}\hookrightarrow B^{(1)}$ as the inclusion. Then $\omega\in\Omega^1(B^{(1)},\mathfrak{su}_\star)$ pulls back to 
\begin{align*}
 \iota^*\omega=\iota^*\left[\begin{array}{cccc}
 -\tau-\im\tfrac{1}{4}\varrho+\im\tfrac{1}{4}\varsigma        &-\im\eta^2       &-\im\eta^{\overline{1}}        &-\im(2\tau-\eta^0)\\ 
 & & & \\
 -\epsilon\eta^{\overline{2}} &-\im\tfrac{1}{4}\varrho-\im\tfrac{3}{4}\varsigma  &\epsilon\eta^{\overline{3}}  &-\epsilon\im\eta^{\overline{2}}\\
 & & & \\
 \eta^1        &\eta^3      &\im\tfrac{3}{4}\varrho+\im\tfrac{1}{4}\varsigma    &\im\eta^1\\
 & & & \\
 -\im\eta^0       &\eta^2      &\eta^{\overline{1}}  &\tau-\im\tfrac{1}{4}\varrho+\im\tfrac{1}{4}\varsigma
 \end{array}\right]\in\Omega^1(B_\mathcal{I},\mathfrak{su}_\star),
\end{align*}
and in particular on $B_\mathcal{I}$ we have 
\begin{equation}\label{pfaffMC}
\iota^*\dd\omega+\iota^*\omega\wedge\iota^*\omega=0. 
\end{equation}
Moreover, when restricted to the fibers of $\pi|_{B_{\mathcal{I}}}:B_\mathcal{I}\to M_\star$ (where the pullbacks of the $\eta$'s vanish), \eqref{pfaffMC} is exactly the Maurer-Cartan equations of the abelian subgroup $P_\star^0\subset SU_\star$. By a theorem of E. Cartan (\cite[Thm 1.6.10]{CfB}), there exist local lifts $B_\mathcal{I}\to SU_\star$ by which the fibers of $B_\mathcal{I}$ are diffeomorphic to $P_\star^0$, and the fibration 
\begin{align*}
\xymatrix{P_\star^0\ar@{->}[r] & B_\mathcal{I}\ar@{->}[d]\\
          & M_\star}
\end{align*}
corresponds to the lowest level of the tower \eqref{SUfibers}.

Using our identifications $B^{(1)}\cong SU_\star$ and $B_\mathcal{I}\cong SU_\star/(P_\star^2P_\star^1)$, we see that $B^{(1)}$ fibers over $B_\mathcal{I}$ as the $P_\star^2P_\star^1$-orbits of Hermitian frames in $B_\mathcal{I}$. We therefore identify an intermediate bundle $B\cong SU_\star/P_\star^2$ as the $(P_\star^2P_\star^1)/P_\star^2$-orbits ($P_\star^2$ is normal in $P_\star^2P_\star^1$). The significance of $B$ is that it corresponds to the bundle $B_4$ constructed in \S\ref{eqprob} when $M=M_\star$.

\vspace{\baselineskip}\subsection{Bianchi Identities, Fundamental Invariants}\label{FundInv}

We return to the bundle $\pi:B_4^{(1)}\to M$ as in \S\ref{eqprob}. The coframing constructed therein is interpreted as a parallelism $\omega\in\Omega^1(B_4^{(1)},\mathfrak{su}_\star)$ by writing $\omega$ as in \eqref{omega}. The structure equations \eqref{SE} on $B_4^{(1)}$ are now summarized 
\begin{align*}
 \dd\omega=-\omega\wedge\omega+C
\end{align*}
where the curvature tensor $C\in\Omega^2(B_4^{(1)},\mathfrak{su}_\star)$ may be written
\begin{align}\label{curvature}
 C=\left[\begin{array}{cccc}
      C^1_1&-\im C^1_2&-\im\overline{C}^3_4&-\im C^1_4\\
      -\epsilon\overline{F}^2\eta^{2}\wedge\eta^{\overline{1}}&C^2_2&\epsilon\overline{C}^3_2&-\epsilon\im\overline{C}^1_2\\
      F^1\eta^{\overline{1}}\wedge\eta^2&C^3_2&C^3_3&\im C^3_4\\
      0&F^2\eta^{\overline{2}}\wedge\eta^1&\overline{F}^1\eta^{1}\wedge\eta^{\overline{2}}&C^1_1\end{array}\right],
\end{align}
for $C^i_j\in\Omega^2(B_4^{(1)},\mathbb{C})$ given by

\begin{align*}
 C^3_2&=T^3_{\overline{1}}\eta^{\overline{1}}\wedge\eta^0+T^3_{\overline{2}}\eta^{\overline{2}}\wedge\eta^0+F^3_1\eta^{\overline{2}}\wedge\eta^1+F^3_2\eta^{\overline{1}}\wedge\eta^2,\\
\\
 C^1_1&=\tfrac{1}{4}(Q^1_{\overline{1}}-Q^2_{\overline{1}})\eta^0\wedge\eta^{\overline{1}}+\tfrac{1}{4}(Q^1_{\overline{2}}-Q^2_{\overline{2}})\eta^0\wedge\eta^{\overline{2}}
 +\tfrac{1}{4}(\overline{Q}^1_{\overline{1}}-\overline{Q}^2_{\overline{1}})\eta^1\wedge\eta^0+\tfrac{1}{4}(\overline{Q}^1_{\overline{2}}-\overline{Q}^2_{\overline{2}})\eta^2\wedge\eta^0\\
 &+\tfrac{1}{2}F^1F^2\eta^{\overline{1}}\wedge\eta^{\overline{2}}-\tfrac{1}{2}\overline{F}^1\overline{F}^2\eta^1\wedge\eta^2+\tfrac{1}{2}|F^1|^2\eta^2\wedge\eta^{\overline{2}}-\tfrac{1}{2}|F^2|^2\eta^1\wedge\eta^{\overline{1}}\\
 &+\tfrac{1}{4}(R-S)\eta^1\wedge\eta^{\overline{2}}+\tfrac{1}{4}(\overline{R}-\overline{S})\eta^2\wedge\eta^{\overline{1}},\\
\\
 C^2_2&=\tfrac{1}{4}(Q^1_{\overline{1}}+3Q^2_{\overline{1}})\eta^0\wedge\eta^{\overline{1}}+\tfrac{1}{4}(Q^1_{\overline{2}}+3Q^2_{\overline{2}})\eta^0\wedge\eta^{\overline{2}}
 +\tfrac{1}{4}(\overline{Q}^1_{\overline{1}}+3\overline{Q}^2_{\overline{1}})\eta^1\wedge\eta^0+\tfrac{1}{4}(\overline{Q}^1_{\overline{2}}+3\overline{Q}^2_{\overline{2}})\eta^2\wedge\eta^0\\
 &-\tfrac{1}{2}F^1F^2\eta^{\overline{1}}\wedge\eta^{\overline{2}}+\tfrac{1}{2}\overline{F}^1\overline{F}^2\eta^1\wedge\eta^2-\tfrac{1}{2}|F^1|^2\eta^2\wedge\eta^{\overline{2}}+\tfrac{1}{2}|F^2|^2\eta^1\wedge\eta^{\overline{1}}\\
 &+\tfrac{1}{4}(R+3S)\eta^1\wedge\eta^{\overline{2}}+\tfrac{1}{4}(\overline{R}+3\overline{S})\eta^2\wedge\eta^{\overline{1}},\\
\\
 C^3_3&=-\tfrac{1}{4}(3Q^1_{\overline{1}}+Q^2_{\overline{1}})\eta^0\wedge\eta^{\overline{1}}-\tfrac{1}{4}(3Q^1_{\overline{2}}+Q^2_{\overline{2}})\eta^0\wedge\eta^{\overline{2}}
 +\tfrac{1}{4}(3\overline{Q}^1_{\overline{1}}+\overline{Q}^2_{\overline{1}})\eta^0\wedge\eta^1+\tfrac{1}{4}(3\overline{Q}^1_{\overline{2}}+\overline{Q}^2_{\overline{2}})\eta^0\wedge\eta^2\\
 &-\tfrac{1}{2}F^1F^2\eta^{\overline{1}}\wedge\eta^{\overline{2}}+\tfrac{1}{2}\overline{F}^1\overline{F}^2\eta^1\wedge\eta^2-\tfrac{1}{2}|F^1|^2\eta^2\wedge\eta^{\overline{2}}+\tfrac{1}{2}|F^2|^2\eta^1\wedge\eta^{\overline{1}}\\
 &-\tfrac{1}{4}(3R+S)\eta^1\wedge\eta^{\overline{2}}-\tfrac{1}{4}(3\overline{R}+\overline{S})\eta^2\wedge\eta^{\overline{1}},\\
\\
 C^1_2&=\im F^3_1\gamma^{\overline{2}}\wedge\eta^0-F^2\gamma^1\wedge\eta^{\overline{2}}+F^2\gamma^{\overline{2}}\wedge\eta^1-T^3_{\overline{2}}\eta^{\overline{2}}\wedge\eta^{\overline{1}}+\im T^3_{\overline{2}}\eta^{\overline{3}}\wedge\eta^0\\
 &+(P^2_{0\overline{1}}\eta^{\overline{1}}+\im p_{0}\eta^2+P^2_{0\overline{2}}\eta^{\overline{2}}+\im(\overline{Q}^1_{\overline{1}}+\overline{Q}^2_{\overline{1}})\eta^3)\wedge\eta^0+(P^2_{10}\eta^0+P^2_{1\overline{2}}\eta^{\overline{2}})\wedge\eta^1\\
&+(Q^2_{\overline{1}}\eta^{\overline{1}}-\overline{Q}^2_{\overline{1}}\eta^1+Q^2_{\overline{2}}\eta^{\overline{2}})\wedge\eta^2,\\
\\
C^3_4&=\im F^3_2\gamma^{\overline{1}}\wedge\eta^0+F^1\gamma^{\overline{1}}\wedge\eta^2-F^1\gamma^2\wedge\eta^{\overline{1}}-\epsilon T^3_{\overline{1}}\eta^{\overline{1}}\wedge\eta^{\overline{2}} +\epsilon \im T^3_{\overline{1}}\eta^{\overline{3}}\wedge\eta^0\\
&+ (\im p_{0}\eta^1+P^1_{0\overline{1}}\eta^{\overline{1}}+P^1_{0\overline{2}}\eta^{\overline{2}}+\im(\overline{Q}^1_{\overline{2}}+\overline{Q}^2_{\overline{2}})\eta^3)\wedge\eta^0+(P^1_{20}\eta^0+P^1_{2\overline{1}}\eta^{\overline{1}})\wedge\eta^2\\
 &+(Q^1_{\overline{1}}\eta^{\overline{1}}+Q^1_{\overline{2}}\eta^{\overline{2}}-\overline{Q}^1_{\overline{2}}\eta^2)\wedge\eta^1,\\
\\
C^1_4&=(O_1\eta^1+\overline{O}_1\eta^{\overline{1}}+O_2\eta^2+\overline{O}_2\eta^{\overline{2}}+O_3\eta^3+\overline{O}_3\eta^{\overline{3}})\wedge\eta^0 -\tfrac{1}{2}(\overline{Q}^1_{\overline{1}}+\overline{Q}^2_{\overline{1}}+\im\overline{F}^3_1F^2)\gamma^{1}\wedge\eta^0\\
&-\tfrac{1}{2}(\overline{Q}^2_{\overline{2}}+\overline{Q}^1_{\overline{2}}+\im F^1\overline{F}^3_2)\gamma^{2}\wedge\eta^0-\tfrac{1}{2}(Q^1_{\overline{1}}+Q^2_{\overline{1}}-\im F^3_1\overline{F}^2)\gamma^{\overline{1}}\wedge\eta^0-\tfrac{1}{2}(Q^2_{\overline{2}}+Q^1_{\overline{2}}-\im\overline{F}^1F^3_2)\gamma^{\overline{2}}\wedge\eta^0\\
&+\tfrac{\im}{2} (P^1_{0\overline{2}}-\epsilon P^2_{0\overline{1}})\eta^{\overline{1}}\wedge\eta^{\overline{2}}+\tfrac{\im}{2}(\overline{P}^1_{0\overline{2}}-\epsilon\overline{P}^2_{0\overline{1}})\eta^{2}\wedge\eta^1+\tfrac{\im}{2}(P^1_{20}+\epsilon\overline{P}^2_{10})\eta^2\wedge\eta^{\overline{1}}+\tfrac{\im}{2} (\epsilon P^2_{10}+\overline{P}^1_{20})\eta^1\wedge\eta^{\overline{2}}\\
&+\epsilon\tfrac{1}{2}(Q^1_{\overline{1}}+Q^2_{\overline{1}})\eta^{\overline{3}}\wedge\eta^2+\epsilon\tfrac{1}{2}(\overline{Q}^1_{\overline{1}}+\overline{Q}^2_{\overline{1}})\eta^3\wedge\eta^{\overline{2}}+\tfrac{1}{2}(\overline{Q}^1_{\overline{2}}+\overline{Q}^2_{\overline{2}})\eta^3\wedge\eta^{\overline{1}}+\tfrac{1}{2}(Q^1_{\overline{2}}+Q^2_{\overline{2}})\eta^{\overline{3}}\wedge\eta^1\\
&+\tfrac{1}{2}\overline{F}^3_2\gamma^{1}\wedge\eta^1+\tfrac{1}{2}F^3_2\gamma^{\overline{1}}\wedge\eta^{\overline{1}}+\epsilon\tfrac{1}{2}\overline{F}^3_1\gamma^{2}\wedge\eta^2+\epsilon\tfrac{1}{2}F^3_1\gamma^{\overline{2}}\wedge\eta^{\overline{2}}\\
&+\epsilon\tfrac{1}{2}\overline{T}^3_{\overline{1}}\eta^{3}\wedge\eta^1+\epsilon\tfrac{1}{2}T^3_{\overline{1}}\eta^{\overline{3}}\wedge\eta^{\overline{1}}+\epsilon\tfrac{1}{2}\overline{T}^3_{\overline{2}}\eta^{3}\wedge\eta^2+\epsilon\tfrac{1}{2}T^3_{\overline{2}}\eta^{\overline{3}}\wedge\eta^{\overline{2}}.
\end{align*}

The coefficients which appear at lowest order are $F^1,F^2$. We find how they vary on $B_4^{(1)}$ by differentiating the structure equations 
\begin{equation*}
\begin{aligned}
0&=\dd(\dd\eta^1)\\
&=(\dd F^1-F^1(\tau-2\im\varrho+\im\varsigma)+\epsilon\overline{F}^2\eta^3+\epsilon F^3_2\eta^{\overline{2}}-\overline{R}\eta^1-P^1_{2\overline{1}}\eta^0)\wedge\eta^{\overline{1}}\wedge\eta^2,
\end{aligned}
\end{equation*}
and similarly,
\begin{equation*}
\begin{aligned}
0&=\dd(\dd\eta^2)\\
&=(\dd F^2-F^2(\tau+\im\varrho-2\im\varsigma)+\overline{F}^1\eta^3+F^3_1\eta^{\overline{1}}-S\eta^2-P^2_{1\overline{2}}\eta^0)\wedge\eta^{\overline{2}}\wedge\eta^1.
\end{aligned}
\end{equation*}
Therefore, for some functions $f^1_{\overline{1}},f^1_2,f^2_1,f^2_{\overline{2}}\in C^\infty(B_4^{(1)},\mathbb{C})$ we can write 
\begin{equation}\label{dFs}
 \begin{aligned}
\dd F^1&= F^1(\tau-2\im\varrho+\im\varsigma)-\epsilon\overline{F}^2\eta^3-\epsilon F^3_2\eta^{\overline{2}}+\overline{R}\eta^1+P^1_{2\overline{1}}\eta^0+f^1_{\overline{1}}\eta^{\overline{1}}+f^1_2\eta^2,\\
\dd F^2&= F^2(\tau+\im\varrho-2\im\varsigma)-\overline{F}^1\eta^3-F^3_1\eta^{\overline{1}}+S\eta^2+P^2_{1\overline{2}}\eta^0+ f^2_1\eta^1+f^2_{\overline{2}}\eta^{\overline{2}}.
 \end{aligned}
\end{equation}
Recall (\cite[Prop B.3.3]{CfB}) that a form $\alpha\in\Omega^\bullet(B_4^{(1)},\mathbb{C})$ is $\pi$-basic if and only if $\alpha$ and $\dd\alpha$ are $\pi$-semibasic. We consider the $\mathbb{R}$-valued semibasic forms 
\begin{align}\label{fundinv}
 &|F^1|^2\eta^0,
 &|F^2|^2\eta^0,
\end{align}
and use \eqref{dFs} to calculate 
\begin{align*}
\dd(|F^1|^2\eta^0)&=-(\overline{F}^1\overline{R}+\overline{f}^1_{\overline{1}}F^1)\eta^0\wedge\eta^1-(F^1R+f^1_{\overline{1}}\overline{F}^1)\eta^0\wedge\eta^{\overline{1}}+\im|F^1|^2\eta^1\wedge\eta^{\overline{1}}+\epsilon\overline{F}^1\overline{F}^2\eta^0\wedge\eta^3\\
&-(\overline{F}^1f^1_2-\epsilon\overline{F}^3_2 F^1)\eta^0\wedge\eta^2-(F^1\overline{f}^1_2-\epsilon F^3_2\overline{F}^1)\eta^0\wedge\eta^{\overline{2}}+\epsilon\im|F^1|^2\eta^2\wedge\eta^{\overline{2}}+\epsilon F^1F^2\eta^0\wedge\eta^{\overline{3}},\\
\dd(|F^2|^2\eta^0)&=-(\overline{F}^2\overline{S}+\overline{f}^2_{\overline{1}}F^2)\eta^0\wedge\eta^2-(F^2S+f^2_{\overline{2}}\overline{F}^2)\eta^0\wedge\eta^{\overline{2}}+\im|F^2|^2\eta^1\wedge\eta^{\overline{1}}+\epsilon\overline{F}^1\overline{F}^2\eta^0\wedge\eta^3\\
&-(\overline{F}^2f^2_1-\epsilon\overline{F}^3_1 F^2)\eta^0\wedge\eta^1-(F^2\overline{f}^2_1-\epsilon F^3_1\overline{F}^2)\eta^0\wedge\eta^{\overline{1}}+\epsilon\im|F^2|^2\eta^2\wedge\eta^{\overline{2}}+\epsilon F^1F^2\eta^0\wedge\eta^{\overline{3}}.
\end{align*}
These are semibasic as well, so we've shown that the one-forms \eqref{fundinv} on $B_4^{(1)}$ are the $\pi$-pullbacks of well-defined invariants on $M$.

Let us make a few more observations about the equations \eqref{dFs}. First, they show that if $F^1$ or $F^2$ is locally constant on $B_4^{(1)}$, then they must locally vanish. Second, we see that if either of $F^1$, $F^2$ vanishes identically, the other must as well. By the same token, we will have  
\begin{equation}\label{van1}
F^3_1=F^3_2=R=S=P^1_{2\overline{1}}=P^2_{1\overline{2}}=0
\end{equation}
in this case. In fact, if either of $F^1,F^2=0$, we will show that every coefficient function in the curvature tensor $C$ must vanish too. This will follow by differentiating more of the structure equations. We revisit
{\small
\begin{equation}\label{d2eta3}
\begin{aligned}
0&=\dd^2\eta^3\\
&=(\dd T^3_{\overline{1}}-T^3_{\overline{1}}(3\tau-2\im\varrho-\im\varsigma)-F^3_2\gamma^2-(Q^1_{\overline{1}}+Q^2_{\overline{1}})\eta^3-\im P^2_{0\overline{1}}\eta^1)\wedge\eta^{\overline{1}}\wedge\eta^0\\
&+(\dd T^3_{\overline{2}}-T^3_{\overline{2}}(3\tau-\im\varrho-2\im\varsigma)-F^3_1\gamma^1-(Q^1_{\overline{2}}+Q^2_{\overline{2}})\eta^3-\im P^1_{0\overline{2}}\eta^2)\wedge\eta^{\overline{2}}\wedge\eta^0\\
&+(\dd F^3_1-2F^3_1(\tau-\im\varsigma)+ 2\im F^2\gamma^1-(R+S)\eta^3+(\im(Q^2_{\overline{2}}-Q^1_{\overline{2}})-F^3_2\overline{F}^1)\eta^2-(F^3_2F^2+2\im T^3_{\overline{2}})\eta^{\overline{1}})\wedge\eta^{\overline{2}}\wedge\eta^1\\
&+(\dd F^3_2-2F^3_2(\tau-\im\varrho)+2\im F^1\gamma^2-(\overline{R}+\overline{S})\eta^3+(\im(Q^1_{\overline{1}}-Q^2_{\overline{1}})-F^3_1\overline{F}^2)\eta^1-(F^3_1F^1+ \epsilon2\im T^3_{\overline{1}})\eta^{\overline{2}})\wedge\eta^{\overline{1}}\wedge\eta^2\\
&+(T^3_{\overline{2}}\overline{F}^2-\im P^1_{0\overline{1}})\eta^{2}\wedge\eta^{\overline{1}}\wedge\eta^0+(T^3_{\overline{1}}\overline{F}^1-\im P^2_{0\overline{2}})\eta^{1}\wedge\eta^{\overline{2}}\wedge\eta^0. 
\end{aligned}
\end{equation}}
Reducing \eqref{d2eta3} by $\eta^0$ and plugging in $F^3_1=F^3_2=0$ implies $T^3_{\overline{1}}=T^3_{\overline{2}}=0$ and $Q^1_{\overline{1}}=Q^2_{\overline{1}}$, $Q^2_{\overline{2}}=Q^1_{\overline{2}}$. Then, returning to the unreduced \eqref{d2eta3} and setting $T^3_{\overline{1}}=T^3_{\overline{2}}=0$ will show 
\begin{equation}\label{van2}
T^3_{\overline{1}}=T^3_{\overline{2}}=Q^1_{\overline{1}}=Q^1_{\overline{2}}=Q^2_{\overline{1}}=Q^2_{\overline{2}}=P^1_{0\overline{1}}=P^1_{0\overline{2}}=P^2_{0\overline{1}}=P^2_{0\overline{2}}=0.
\end{equation}
We assume that we have \eqref{van1} and \eqref{van2} as we now differentiate $\im\dd\varrho$ and $\im\dd\varsigma$;
\begin{align*}
 0&=\dd(\im\dd\varrho)\\
 &=-3p_0\eta^0\wedge\eta^1\wedge\eta^{\overline{1}}+\epsilon p_0\eta^0\wedge\eta^2\wedge\eta^{\overline{2}}+\tfrac{\im}{2}(\epsilon P^2_{10}+3\overline{P}^1_{20})\eta^0\wedge\eta^1\wedge\eta^{\overline{2}}-\tfrac{\im}{2}(\epsilon\overline{P}^2_{10}+3P^1_{20})\eta^0\wedge\eta^2\wedge\eta^{\overline{1}},
\end{align*}
and 
\begin{align*}
 0&=\dd(\im\dd\varsigma)\\
 &= p_0\eta^0\wedge\eta^1\wedge\eta^{\overline{1}}-\epsilon3p_0\eta^0\wedge\eta^2\wedge\eta^{\overline{2}}-\tfrac{\im}{2}(\epsilon3 P^2_{10}+\overline{P}^1_{20})\eta^0\wedge\eta^1\wedge\eta^{\overline{2}}+\tfrac{\im}{2}(\epsilon3\overline{P}^2_{10}+P^1_{20})\eta^0\wedge\eta^2\wedge\eta^{\overline{1}},
\end{align*}
which together demonstrate 
\begin{equation}\label{van3}
p_0=P^1_{20}=P^2_{10}=0.
\end{equation}
Finally, we simply state that differentiating $\dd\gamma^1$ and $\dd\gamma^2$ will now show 
\begin{equation}\label{van4}
O_1=O_2=O_3=0.
\end{equation}

By \eqref{van1},\eqref{van2},\eqref{van3}, and \eqref{van4}, we have shown that $C=0$ when one of \eqref{fundinv} vanishes. In this case, the structure equations of $M$ are exactly the Maurer-Cartan equations \eqref{MC}, and $M$ is locally CR-equivalent to the homogeneous model $M_\star$.

\vspace{\baselineskip}\subsection{Equivariance}

Let us establish some general definitions which we will use to interpret the bundles $\hat{\pi}:B_4^{(1)}\to B_4$ and $\pi:B_4^{(1)}\to M$ constructed in \S\ref{eqprob}. A reference for this material is \cite{CapSlovak}. Let $G$ be a Lie group with Lie algebra $\mathfrak{g}$, $H\subset G$ a Lie subgroup with Lie algebra $\mathfrak{h}\subset\mathfrak{g}$, and $\exp:\mathfrak{h}\to H$ the exponential map. For each $g\in G$, $G$ acts on itself isomorphically by conjugation $a\mapsto gag^{-1}\ \forall a\in G$, which induces the adjoint representation $\Ad_g:\mathfrak{g}\to\mathfrak{g}$ acting automorphically on $\mathfrak{g}$. By restriction of this adjoint action, $\mathfrak{g}$ is a representation of $H$ as well.

Suppose we have a manifold $M$ and a principal bundle $\pi:\mathcal{B}\to M$ with structure group $H$. For $h\in H$, we let $R_h:\mathcal{B}\to\mathcal{B}$ denote the right principal action of $h$ on the fibers of $\mathcal{B}$. In particular, the vertical bundle $\ker\pi_*\subset T\mathcal{B}$ is trivialized by fundamental vector fields $\zeta_X$ associated to $X\in\mathfrak{h}$, where the value at $u\in\mathcal{B}$ of $\zeta_X$ is $\left.\frac{d}{d{\tt t}}\right|_{{\tt t}=0}R_{\exp({\tt t}X)}(u)$. The bundle $\pi:\mathcal{B}\to M$ defines a \emph{Cartan geometry} of type $(G,H)$ if it admits a \emph{Cartan connection}:
\begin{definition}
 A Cartan connection is a $\mathfrak{g}$-valued one form $\omega\in\Omega^1(\mathcal{B},\mathfrak{g})$ which satisfies:
 \begin{itemize}
  \item $\omega:T_u\mathcal{B}\to \mathfrak{g}$ is a linear isomorphism for every $u\in \mathcal{B}$,
  \item $\omega(\zeta_X)=X$ for every $X\in\mathfrak{h}$,
  \item $R_h^*\omega =\Ad_{h^{-1}}\circ \omega$ for every $h\in H$.
 \end{itemize}
\end{definition}
The purpose of this section is to prove the following 
\begin{prop}
For $\mathcal{B}=B_4^{(1)}$ and $G=SU_\star$, the bundles $\hat{\pi}:B_4^{(1)}\to B_4$ and $\pi:B_4^{(1)}\to M$ are principal bundles with structure groups isomorphic to $H=P_\star^2$ and $H=P_\star$, respectively -- c.f. \S\ref{homogmodel}. The $\mathfrak{su}_\star$-valued parallelism $\omega$ constructed in the previous section defines a Cartan connection for the former bundle, but not the latter.
\end{prop}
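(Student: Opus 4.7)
The plan is to first identify the structure groups of the two bundles, then verify the three Cartan connection axioms for $\hat{\pi}$, and finally diagnose the failure of the equivariance axiom for $\pi$ directly from the curvature. For the structure groups, the fiber of $\hat{\pi}\colon B_4^{(1)}\to B_4$ is the affine real line parameterized by $y$ via \eqref{prolongambiguity} acting by translation; matching $y$ with the one-dimensional parameter of $P_\star^2$ in \eqref{Pgroups} identifies this structure group with $P_\star^2$. Comparing the seven-parameter description of $G_4$ in \eqref{G4} with the product $P_\star^1 P_\star^0$ obtained from \eqref{Pgroups} shows that $G_4\cong P_\star/P_\star^2$ as Lie groups, so $\pi=\underline{\pi}\circ\hat{\pi}$ is a principal $P_\star$-bundle.

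For $\omega$ to be a Cartan connection on $\hat{\pi}$, three conditions must hold. The pointwise linear isomorphism axiom is built into the construction of $\omega$ as a parallelism in \S\ref{eqprob}. For reproduction, note that $\mathfrak{p}_\star^2$ is generated by $E=\im E_{14}$, whose fundamental vector field on $B_4^{(1)}$ is $\zeta_E=\partial/\partial y$; differentiating \eqref{prolongtaut} yields $\dd\tau=\dd y\wedge\eta^0+\cdots$, which compared with the structure equation $\dd\tau=-\psi\wedge\eta^0+\cdots$ of \eqref{SE} forces $\psi(\partial/\partial y)=-1$. From the explicit matrix \eqref{omega}, this gives $\omega(\zeta_E)=-\im\psi(\zeta_E)E_{14}=\im E_{14}=E$, as required. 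For equivariance, since $P_\star^2\cong\mathbb{R}$ is connected, it suffices to verify $\mathcal{L}_{\zeta_E}\omega=-[E,\omega]$ at the Lie algebra level; Cartan's formula for the Lie derivative together with $\dd\omega=-\omega\wedge\omega+C$ and the fact that $\omega(\zeta_E)=E$ is constant reduces this to the condition $\iota_{\zeta_E}C=0$. Inspection of the curvature components in \S\ref{FundInv} shows that $\psi$ is entirely absent from $C$, so $\iota_{\partial/\partial y}C=0$ holds automatically.

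To show $\omega$ fails to define a Cartan connection on $\pi$, the same Cartan-formula argument shows that equivariance under $P_\star$ would force $\iota_{\zeta_X}C=0$ for every $X\in\mathfrak{p}_\star$, equivalently that $C$ be $\pi$-semibasic, i.e.\ expressible in terms of the tautological one-forms $\eta^0,\eta^1,\eta^2,\eta^3$ and their conjugates alone. However, the component $C^3_4$ computed in \S\ref{FundInv} contains terms such as $F^1\gamma^{\overline{1}}\wedge\eta^2$ and $-F^1\gamma^2\wedge\eta^{\overline{1}}$, each involving a pseudoconnection form that is non-semibasic for $\pi$. Since $|F^1|^2\eta^0$ descends to a genuine local invariant of $M$ which is generically nonzero (and in particular nontrivial on the hypersurface of \S\ref{nonflat}), $C$ is not $\pi$-semibasic, and hence $\omega$ cannot be equivariant under the full $P_\star$-action. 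The main subtlety is ensuring the structure-group identifications align the forms $\psi$ and $\gamma^1,\gamma^2$ on $B_4^{(1)}$ with the correct entries of $\mathfrak{p}_\star\subset\mathfrak{su}_\star$; once this is pinned down, the dichotomy between the two bundles is diagnosed by the simple observation that $\psi$ is missing from $C$ while $\gamma^1,\gamma^{\overline{1}},\gamma^2,\gamma^{\overline{2}}$ are not.
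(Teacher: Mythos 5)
Your proposal is correct, but it reaches the equivariance statement for $\hat{\pi}:B_4^{(1)}\to B_4$ by a genuinely different route than the paper. The paper verifies the finite condition $R_{\check{g}}^*\omega=\Ad_{\varphi(\check{g}^{-1})}\circ\omega$ directly: it computes the pullback of each tautological form under the $\mathfrak{g}_4^{(1)}$-action (the nontrivial step being the determination of $R_{\check{g}}^*\psi=\psi-2\check{y}\tau+\check{y}^2\eta^0$, obtained by differentiating the known pullback of $\tau$ and then of $\psi$ itself), and then matches the resulting matrix against the adjoint action of the explicitly constructed isomorphism $\varphi:\mathfrak{g}_4^{(1)}\to P_\star^2$. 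You instead invoke the infinitesimal criterion: for the connected group $P_\star^2\cong\mathbb{R}$, Cartan's formula together with $\dd\omega=-\omega\wedge\omega+C$ and $\omega(\zeta_E)=E$ reduces equivariance to $\iota_{\zeta_E}C=0$, which is immediate since $\psi$ never appears in $C$. This is exactly the semibasic-curvature criterion of \cite[Lem 1.5.1]{CapSlovak} that the paper deploys only for the \emph{negative} direction; applying it in both directions unifies the argument and makes the dichotomy between the two bundles transparent ($\psi$ is absent from $C$, but $\gamma^1,\gamma^2$ and their conjugates are not). What the paper's longer computation buys is the explicit formula for $R_{\check{g}}^*\psi$ and a concrete verification of the group isomorphism, which sidesteps the sign and normalization bookkeeping in identifying $\zeta_E$ with $\pm\partial/\partial y$ and $\omega(\zeta_E)$ with $E$ --- the one place where your argument, as you acknowledge, still requires the conventions to be pinned down. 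Two small remarks: passing from ``$\hat{\pi}$ is principal $P_\star^2$ and $\underline{\pi}$ is principal $G_4\cong P_\star/P_\star^2$'' to ``$\pi$ is principal $P_\star$'' is not automatic and is the reason the paper exhibits the combined trivialization \eqref{G41}; and your observation that the failure for $\pi:B_4^{(1)}\to M$ requires the curvature to be genuinely nonvanishing (hence the appeal to the non-flat example of \S\ref{nonflat}) is a point the paper leaves implicit, since for the flat model $C=0$ is vacuously semibasic.
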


By construction, $\omega$ satisfies the first property of a Cartan connection, and the fundamental vector fields are spanned by vertical vector fields dual to the pseudoconnection forms that are vertical for $\hat{\pi}$ or $\pi$, so it remains to determine if $\omega$ satisfies the final, equivariancy condition. In the process, we confirm the first statement of the proposition when we realize a local trivialization of the bundle $\pi:B_4^{(1)}\to M$ via those of the bundles $\hat{\pi}:B_4^{(1)}\to B_4$ and $\underline{\pi}:B_4\to M$.

Let $\mathfrak{g}_4$ be the Lie algebra of $G_4$. We know that $G_4\subset GL(V)$, so $\mathfrak{g}_4\subset V\otimes V^*$ and we can define $\mathfrak{g}_4^{(1)}$ to be the kernel in $\mathfrak{g}_4\otimes V^*$ of the skew-symmetrization map $V\otimes V^*\otimes V^*\to V\otimes \Lambda^2V^*$. This abelian group parameterizes the ambiguity in the pseudoconnection forms on $B_4$ (c.f. \cite[\S3.1.2]{BryantGriffithsGrossman}). In particular, if we write $\underline{\eta}\in\Omega^1(B_4,V)$ for the tautological form on $B_4$ and use underlines to indicate a coframing of $B_4$ which satisfies the structure equations \eqref{B4SE}, we have a local trivialization $B_4^{(1)}\cong\mathfrak{g}_4^{(1)}\times B_4$ as all coframings of $B_4$ which satisfy the structure equations:
\begin{align}\label{g41xB4}
\left[\begin{array}{ccccccccc}
1&0&0&0&0&0&0&0&0\\       
0&1&0&0&0&0&0&0&0\\ 
0&0&1&0&0&0&0&0&0\\ 
0&0&0&1&0&0&0&0&0\\ 
0&0&0&0&1&0&0&0&0\\ 
0&0&0&0&0&1&0&0&0\\ 
y&0&0&0&0&0&1&0&0\\ 
0&y&0&0&0&0&0&1&0\\ 
0&0&y&0&0&0&0&0&1\end{array}\right] 
\left[\begin{array}{c}
\underline{\eta}^0\\\underline{\eta}^1\\\underline{\eta}^2\\\underline{\eta}^3\\\underline{\varrho}\\\underline{\varsigma}\\\underline{\tau}\\\underline{\gamma}^1\\\underline{\gamma}^2\end{array}\right].
\end{align} 

We abbreviate the coframing \eqref{g41xB4} by $\eta_y\in B_4^{(1)}$, and we let $\eta_+$ denote the column vector \eqref{prolongtaut} of tautological forms on $B_4^{(1)}$. With this notation we can concisely say 
\begin{align*}
 \eta_+=\hat{\pi}^*\eta_y.
\end{align*}
For fixed $\check{y}\in\mathbb{R}$, let $\check{g}\in\mathfrak{g}_4^{(1)}$ be the group element represented by the matrix \eqref{g41xB4} where the fiber coordinate $y\in C^\infty(B_4^{(1)})$ equals $\check{y}$. The right principal $\mathfrak{g}_4^{(1)}$-action $R_{\check{g}}:B_4^{(1)}\to B_4^{(1)}$ is simply given by matrix multiplication 
\begin{align*}
R_{\check{g}}:\eta_y\mapsto \check{g}^{-1}\eta_y=\eta_{y-\check{y}}.
\end{align*}
Thus, the pullback $R^*_{\check{g}}:T^*_{\eta_{y-\check{y}}}B_4^{(1)}\to T^*_{\eta_{y}}B_4^{(1)}$ of the tautological forms along this principal action is also given by matrix multiplication
\begin{align*}
R^*_{\check{g}}\eta_+= \check{g}^{-1}\eta_+.
\end{align*}
More explicitly, 
\begin{align}\label{g41tautpullback}
R^*_{\check{g}}\left[\begin{array}{c}
\eta^0 \\\eta^1 \\\eta^2 \\\eta^3 \\\varrho \\\varsigma \\\tau \\\gamma^1 \\\gamma^2 \end{array}\right]= 
\left[\begin{array}{c}
\eta^0 \\\eta^1 \\\eta^2 \\\eta^3 \\\varrho \\\varsigma \\\tau-\check{y}\eta^0 \\\gamma^1-\check{y}\eta^1 \\\gamma^2-\check{y}\eta^2 \end{array}\right].
\end{align}

It remains to determine $R^*_{\check{g}}\psi$, for which we enlist the help of the structure equations \eqref{SE} of $B_4^{(1)}$. We differentiate the equation
\begin{align*}
 R_{\check{g}}^*(\tau)=\tau-\check{y}\eta^0
\end{align*}
and use \eqref{g41tautpullback} to conclude  
\begin{align*}
 -R_{\check{g}}^*(\psi)\wedge\eta^0=-(\psi-2\check{y}\tau)\wedge\eta^0,
\end{align*}
whence we see that 
\begin{align*}
 R_{\check{g}}^*(\psi)\equiv\psi-2\check{y}\tau\mod\{\eta^0\}.
\end{align*}
Let us therefore write 
\begin{align*}
R_{\check{g}}^*(\psi)=\psi-2\check{y}\tau+a\eta^0 
\end{align*}
for some $a\in\mathbb{R}$ and differentiate again, this time reducing by $\eta^0,\eta^2,\eta^3,\eta^{\overline{2}},\eta^{\overline{3}}$ to get
\begin{align*}
0&\equiv\tfrac{1}{2}(R_{\check{g}}^*(\overline{F}^3_2)-\overline{F}^3_2)\gamma^1\wedge\eta^1+\tfrac{1}{2}(R_{\check{g}}^*(\overline{F}^3_2)-\overline{F}^3_2)\gamma^{\overline{1}}\wedge\eta^{\overline{1}}-\im(a-\check{y}^2)\eta^1\wedge\eta^{\overline{1}}
&\mod\{\eta^0,\eta^2,\eta^3,\eta^{\overline{2}},\eta^{\overline{3}}\}.
\end{align*}
Thus we conclude 
\begin{align*}
 R_{\check{g}}^*(\psi)=\psi-2\check{y}\tau+\check{y}^2\eta^0,
\end{align*}
which along with \eqref{g41tautpullback} shows 
\begin{align}\label{omegapullback}
R_{\check{g}}^*\omega=\left[\begin{array}{cccc}
 -(\tau-\check{y}\eta^0)-\im\tfrac{1}{4}\varrho+\im\tfrac{1}{4}\varsigma        &-\im(\gamma^2-\check{y}\eta^2)       &-\im(\gamma^{\overline{1}}-\check{y}\eta^{\overline{1}})        &-\im(\psi-2\check{y}\tau+\check{y}^2\eta^0)\\ 
 & & & \\
 -\epsilon\eta^{\overline{2}} &-\im\tfrac{1}{4}\varrho-\im\tfrac{3}{4}\varsigma  &\epsilon\eta^{\overline{3}}  &-\epsilon\im(\gamma^{\overline{2}}-\check{y}\eta^{\overline{2}})\\
 & & & \\
 \eta^1        &\eta^3      &\im\tfrac{3}{4}\varrho+\im\tfrac{1}{4}\varsigma    &\im(\gamma^1-\check{y}\eta^1)\\
 & & & \\
 -\im\eta^0       &\eta^2      &\eta^{\overline{1}}  &(\tau-\check{y}\eta^0)-\im\tfrac{1}{4}\varrho+\im\tfrac{1}{4}\varsigma
 \end{array}\right].
\end{align}

It is clear that $\mathfrak{g}_4^{(1)}$ is isomorphic to $P_\star^2$ as they are both one-dimensional, abelian Lie groups. We formally define an isomorphism $\varphi:\mathfrak{g}_4^{(1)}\to P_\star^2$ by mapping the element represented by the inverse of the matrix \eqref{g41xB4} to the $P_\star^2$ matrix in \eqref{Pgroups}. In particular,
\begin{align*}
 \varphi(\check{g}^{-1})=\left[\begin{array}{cccc}
1&0&0&\im \check{y}\\
0&1&0 &0\\
0&0&1&0\\
0&0&0&1\end{array}\right], 
\end{align*}
so it is straightforward to check that $\Ad_{\varphi(\check{g}^{-1})}\circ\omega$ agrees with the matrix \eqref{omegapullback}. Thus we have shown that $\hat{\pi}:B_4^{(1)}\to B_4$ is a principal $P_\star^2$-bundle for which $\omega\in\Omega^1(B_4^{(1)},\mathfrak{su}_\star)$ is a Cartan connection.

Recall that the bundle $\underline{\pi}:B_4\to M$ from \S\ref{last2} is locally trivialized as $B_4\cong G_4\times M$ by fixing a 4-adapted coframing $\theta_\mathbbm{1}$ of $M$. This trivialization parameterizes local 4-adapted coframings by $g^{-1}\theta_\mathbbm{1}$ where $g^{-1}$ is the matrix \eqref{G4}. Furthermore, the tautological forms on $B_4$ have the local expression 
\begin{align*}
\left[\begin{array}{c}\underline{\eta}^0\\ \underline{\eta}^1\\\underline{\eta}^2\\\underline{\eta}^3\end{array}\right]=
\left[\begin{array}{cccc}t^2&0&0&0\\c^1& t\e^{\im r}&0&0\\c^2&0&t\e^{\im s}&0\\\tfrac{\im}{t^2}c^1c^2&\tfrac{\im}{t}\e^{\im r}c^2&\tfrac{\im}{t}\e^{\im s}c^1&\e^{\im(r+s)}\end{array}\right]
\left[\begin{array}{c}\underline{\pi}^*\theta^0_\mathbbm{1}\\ \underline{\pi}^*\theta^1_\mathbbm{1}\\\underline{\pi}^*\theta^2_\mathbbm{1}\\\underline{\pi}^*\theta^3_\mathbbm{1}\end{array}\right];\hspace{0.5cm} r,s,0\neq t\in C^\infty(B_4); c^1,c^2\in C^\infty(B_4,\mathbb{C}) ,
\end{align*}
As such, the coframing $\eta_y$ of $B_4$ in \eqref{g41xB4} above may be expanded 
\begin{align}\label{G41}
\left[\begin{array}{cccccccccc}t^2&0&0&0&0&0&0&0&0\\
c^1& t\e^{\im r}&0&0&0&0&0&0&0\\
c^2&0&t\e^{\im s}&0&0&0&0&0&0\\
\tfrac{\im}{t^2}c^1c^2&\tfrac{\im}{t}\e^{\im r}c^2&\tfrac{\im}{t}\e^{\im s}c^1&\e^{\im(r+s)}&0&0&0&0&0\\
0&0&0&0&1&0&0&0&0\\
0&0&0&0&0&1&0&0&0\\ 
yt^2&0&0&0&0&0&1&0&0\\
yc^1& yt\e^{\im r}&0&0&0&0&0&1&0\\
yc^2&0&yt\e^{\im s}&0&0&0&0&0&1
\end{array}\right]
\left[\begin{array}{c}
\underline{\pi}^*\theta^0_\mathbbm{1}\\ \underline{\pi}^*\theta^1_\mathbbm{1}\\\underline{\pi}^*\theta^2_\mathbbm{1}\\\underline{\pi}^*\theta^3_\mathbbm{1}\\\underline{\varrho}\\\underline{\varsigma}\\\underline{\tau}\\\underline{\gamma}^1\\\underline{\gamma}^2\end{array}\right],
\end{align}
and this defines a local trivialization of the bundle $\pi:B_4^{(1)}\to M$ as $B_4^{(1)}\cong G_4^{(1)}\times M$ where the structure group $G_4^{(1)}\cong\mathfrak{g}_4^{(1)}\times G_4$ is parameterized as shown. We extend the isomorphism $\varphi$ above to an isomorphism $G_4^{(1)}\to P_\star$ by mapping the inverse of the matrix \eqref{G41} to the matrix \eqref{Pstar}. In this way we realize $\pi:B_4^{(1)}\to M$ as a principal $P_\star$-bundle over $M$.

We need not attempt to verify the equivariancy condition on this bundle; $\omega$ cannot be a Cartan connection for $\pi:B_4^{(1)}\to M$ since the curvature tensor $C$ given by \eqref{curvature} is not $\pi$-semibasic; see \cite[Lem 1.5.1]{CapSlovak}.

\vspace{\baselineskip}\subsection{A Non-Flat Example}\label{nonflat}

Recall from \S\ref{FundInv} that a necessary and sufficient condition for a 2-non-degenerate CR manifold $M$ to be locally CR equivalent to the homogeneous model $M_\star$ is that the coefficients $F^1,F^2$ of the fundamental invariants \eqref{fundinv} vanish. We saw that this implies the curvature tensor $C$ as in \eqref{curvature} is trivial, and such $M$ is therefore called flat. To demonstrate the existence of non-flat $M$, we consider $\mathbb{C}^4$ with complex coordinates $\{z^i,z^{\overline{i}}\}_{i=1}^4$, and let $M$ be the hypersurface given by the level set $\rho^{-1}(0)$ of a smooth function $\rho:\mathbb{C}^4\to\mathbb{R}$ whose partial derivatives do not all vanish. In this setting, we can take the contact form $\theta^0\in\Omega^1(M)$ to be 
\begin{align}\label{contactformrho}
 \theta^0:=-\im\partial\rho=-\im\frac{\partial\rho}{\partial z^i}\dd z^i.
\end{align}

After a change of coordinates if necessary, the equation $\rho=0$ may be written 
\begin{align*}
 F(z^1,z^2,z^3,z^{\overline{1}},z^{\overline{2}},z^{\overline{3}})=z^4+z^{\overline{4}},
\end{align*}
for $F:\mathbb{C}^3\to\mathbb{R}$, and the forms $\dd z^j,\dd z^{\overline{j}}$ ($1\leq j\leq3$) complete $\theta^0$ to a local coframing of $M$. In the simplified case that $F$ is given by 
\begin{align*}
 F(z^1,z^2,z^3,z^{\overline{1}},z^{\overline{2}},z^{\overline{3}})=f(z^1+z^{\overline{1}},z^2+z^{\overline{2}},z^3+z^{\overline{3}})
\end{align*}
for some $f:\mathbb{R}^3\to\mathbb{R}$, we have 
\begin{align*}
 F_j:=\frac{\partial F}{\partial z^j}=\frac{\partial F}{\partial z^{\overline{j}}}=:F_{\overline{j}},
\end{align*}
and we denote their common expression by $f_j$. Thus, \eqref{contactformrho} may be written  
\begin{align}\label{contactformf}
 \theta^0=-\im f_j\dd z^j+\im\dd z^4.
\end{align}
Second order partial derivatives are indicated by two subscripts, so that differentiating \eqref{contactformf} gives the following matrix representation of the Levi form of $M$ with respect to the coframing $\{\dd z^j,\dd z^{\overline{j}}\}_{j=1}^3$: 
\begin{align*}
 \left[\begin{array}{ccc}f_{11}&f_{12}&f_{13}\\f_{12}&f_{22}&f_{23}\\f_{13}&f_{23}&f_{33}\end{array}\right].
\end{align*}

If we impose the condition that $f_{12}=0$ while all other $f_{jk}$ are nonvanishing, then Levi-degeneracy is equivalent to the partial differential equation 
\begin{align}\label{detzero}
 0=\det(f_{jk})=f_{11}f_{22}f_{33}-f_{11}(f_{23})^2-f_{22}(f_{13})^2,
\end{align}
which is satisfied, for example, when 
\begin{align}\label{detzerosatisfied}
 &(f_{23})^2=\tfrac{1}{2}f_{22}f_{33},
 &(f_{13})^2=\tfrac{1}{2}f_{11}f_{33}.
\end{align}
We further assume that $f_{jj}>0$ for $j=1,2,3$, so that when \eqref{detzerosatisfied} holds, $f_{k3}=\pm\sqrt{\tfrac{1}{2}f_{kk}f_{33}}$ for $k=1,2$, and the coframing given by 
\begin{align}\label{diaglevi}
\left[\begin{array}{cccc}\theta^0\\\theta^1\\\theta^2\\\theta^3\end{array}\right]=
\left[\begin{array}{cccc}1&0&0&0\\0&\sqrt{f_{11}}&0&\pm\sqrt{\tfrac{1}{2}f_{33}}\\0&0&\sqrt{f_{22}}&\pm\sqrt{\tfrac{1}{2}f_{33}}\\0&0&0&1\end{array}\right]
\left[\begin{array}{cccc}\theta^0\\\dd z^1\\\dd z^2\\\dd z^3\end{array}\right]
\end{align}
diagonalizes the Levi form,
\begin{align*}
 \dd \theta^0=\im\theta^1\wedge\theta^{\overline{1}}+\im\theta^2\wedge\theta^{\overline{2}}.
\end{align*}

We will compute the structure equations for a concrete example: let $x_1,x_2,x_3$ be coordinates for $\mathbb{R}^3$ and take $\mathbb{R}_+^3$ to be the subspace where all coordinates are strictly positive. Define 
\begin{align}\label{nonflatf}
 f(x_1,x_2,x_3)=-x_3\ln\left(\frac{x_1x_2}{(x_3)^2}\right).
\end{align}
In the sequel, we will continue to denote $x_j=z^j+z^{\overline{j}}$ in order to compactify notation. Thus, \eqref{contactformf} is given by 
\begin{align*}
 \theta^0=\im\frac{x_3}{x_1}\dd z^1+\im\frac{x_3}{x_2}\dd z^2+\im\left(\ln\left(\frac{x_1x_2}{(x_3)^2}\right)-2\right)\dd z^3+\im\dd z^4,
\end{align*}
and our first approximation \eqref{diaglevi} at an adapted coframing is 
\begin{align}\label{1approx}
\left[\begin{array}{cccc}\theta^0\\\theta^1\\\theta^2\\\theta^3\end{array}\right]=
\left[\begin{array}{cccc}1&0&0&0\\0&\tfrac{\sqrt{x_3}}{x_1}&0&-\tfrac{1}{\sqrt{x_3}}\\0&0&\tfrac{\sqrt{x_3}}{x_2}&-\tfrac{1}{\sqrt{x_3}}\\0&0&0&1\end{array}\right]
\left[\begin{array}{cccc}\theta^0\\\dd z^1\\\dd z^2\\\dd z^3\end{array}\right].
\end{align}
We differentiate to determine the structure equations so far,
\begin{equation}\label{1approxSE}
\begin{aligned}
\dd\theta^0&=\im\theta^1\wedge\theta^{\overline{1}}+\im\theta^2\wedge\theta^{\overline{2}},\\
\dd\theta^1&=\tfrac{1}{x_3}\theta^3\wedge\theta^{\overline{1}}+\tfrac{1}{\sqrt{x_3}}\theta^1\wedge\theta^{\overline{1}}-\tfrac{1}{2x_3}\theta^1\wedge\theta^3+\tfrac{1}{2x_3}\theta^1\wedge\theta^{\overline{3}},\\
\dd\theta^2&=\tfrac{1}{x_3}\theta^3\wedge\theta^{\overline{2}}+\tfrac{1}{\sqrt{x_3}}\theta^2\wedge\theta^{\overline{2}}-\tfrac{1}{2x_3}\theta^2\wedge\theta^3+\tfrac{1}{2x_3}\theta^2\wedge\theta^{\overline{3}},\\
\dd\theta^3&=0.
\end{aligned}
\end{equation}
Recall that the structure group $G_0$ of all 0-adapted coframings is parameterized by \eqref{G0}, and that the subgroup $G_1$ which preserves 1-adaptation is given by the additional conditions \eqref{G1}. The structure equations \eqref{1approxSE} show that our coframing is 1-adapted as in \eqref{MSE1}, and we maintain this property when we submit it to a $G_1$-transformation to get a new coframing   
\begin{align}\label{2approx}
\left[\begin{array}{cccc}\eta^0\\\theta^{1'}\\\theta^{2'}\\\theta^{3'}\end{array}\right]=
\left[\begin{array}{rrrr}2&0&0&0\\0&1&\im&0\\0&1&-\im&0\\0&0&0&\tfrac{1}{x_3}\end{array}\right]
\left[\begin{array}{cccc}\theta^0\\\theta^1\\\theta^2\\\theta^3\end{array}\right].
\end{align}
The new structure equations are 
\begin{equation}\label{2approxSE}
\begin{aligned}
\dd\eta^0&=\im\theta^{1'}\wedge\theta^{\overline{1}'}+\im\theta^{2'}\wedge\theta^{\overline{2}'},\\
\dd\theta^{1'}&=\theta^{3'}\wedge\theta^{\overline{2}'}+\tfrac{1+\im}{4\sqrt{x_3}}\theta^{1'}\wedge\theta^{\overline{1}'}+\tfrac{1-\im}{4\sqrt{x_3}}\theta^{1'}\wedge\theta^{\overline{2}'}+\tfrac{1-\im}{4\sqrt{x_3}}\theta^{2'}\wedge\theta^{\overline{1}'}+\tfrac{1+\im}{4\sqrt{x_3}}\theta^{2'}\wedge\theta^{\overline{2}'}+\tfrac{1}{2}\theta^{1'}\wedge(\theta^{\overline{3}'}-\theta^{3'}),\\
\dd\theta^{2'}&=\theta^{3'}\wedge\theta^{\overline{1}'}+\tfrac{1-\im}{4\sqrt{x_3}}\theta^{1'}\wedge\theta^{\overline{1}'}+\tfrac{1+\im}{4\sqrt{x_3}}\theta^{1'}\wedge\theta^{\overline{2}'}+\tfrac{1+\im}{4\sqrt{x_3}}\theta^{2'}\wedge\theta^{\overline{1}'}+\tfrac{1-\im}{4\sqrt{x_3}}\theta^{2'}\wedge\theta^{\overline{2}'}+\tfrac{1}{2}\theta^{2'}\wedge(\theta^{\overline{3}'}-\theta^{3'}),\\
\dd\theta^{3'}&=\theta^{3'}\wedge\theta^{\overline{3}'},
\end{aligned}
\end{equation}
so our coframing \eqref{2approx} is now 2-adapted according to \eqref{MSE2}. The structure group $G_2$ of the bundle of 2-adapted coframes is parameterized by \eqref{G2}, so our 2-adaptation is preserved when we apply a $G_2$ transformation to get a new coframing  
\begin{align}\label{3approx}
\left[\begin{array}{cccc}\eta^0\\\eta^{1}\\\eta^{2}\\\theta^{3''}\end{array}\right]=
\left[\begin{array}{rrrr}1&0&0&0\\c^1&1&0&0\\c^2&0&1&0\\0&b_1&b_2&1\end{array}\right]
\left[\begin{array}{cccc}\eta^0\\\theta^{1'}\\\theta^{2'}\\\theta^{3'}\end{array}\right],
\end{align}
for some $c^1,c^2,b_1,b_2\in C^\infty(M,\mathbb{C})$. The effect of this transformation on the first three structure equations may be written 
\begin{equation}\label{3approxSEmod}
\begin{aligned}
\dd\eta^0&=\im\eta^1\wedge\eta^{\overline{1}}+\im\eta^{2}\wedge\eta^{\overline{2}}+\im\eta^0\wedge(\overline{c}^1\eta^1-c^1\eta^{\overline{1}}+\overline{c}^2\eta^2-c^2\eta^{\overline{2}}),\\
\dd\eta^{1}&\equiv\eta^{3}\wedge\eta^{\overline{2}}+\tfrac{b_2}{2}\eta^1\wedge\eta^2-\tfrac{2\sqrt{x_3}(\overline{b}_1-2\im c^1)-1-\im}{4\sqrt{x_3}}\eta^1\wedge\eta^{\overline{1}}-\tfrac{2\sqrt{x_3}(2b_1+\overline{b}_2)-1+\im}{4\sqrt{x_3}}\eta^1\wedge\eta^{\overline{2}}\\
&+\tfrac{1-\im}{4\sqrt{x_3}}\eta^2\wedge\eta^{\overline{1}}+\tfrac{4\sqrt{x_3}(\im c^1-b_2)+1+\im}{4\sqrt{x_3}}\eta^2\wedge\eta^{\overline{2}}+\tfrac{1}{2}\eta^{1}\wedge(\theta^{\overline{3}''}-\theta^{3''}) &&\mod\{\eta^0\},\\
\dd\eta^2&\equiv\eta^{3}\wedge\eta^{\overline{1}}-\tfrac{b_1}{2}\eta^1\wedge\eta^2+\tfrac{4\sqrt{x_3}(\im c^2-b_1)+1-\im}{4\sqrt{x_3}}\eta^1\wedge\eta^{\overline{1}}-\tfrac{2\sqrt{x_3}(\overline{b}_1+2b_2)-1-\im}{4\sqrt{x_3}}\eta^2\wedge\eta^{\overline{1}}\\
&+\tfrac{1+\im}{4\sqrt{x_3}}\eta^1\wedge\eta^{\overline{2}}-\tfrac{2\sqrt{x_3}(\overline{b}_2-2\im c^2)-1+\im}{4\sqrt{x_3}}\eta^2\wedge\eta^{\overline{2}}+\tfrac{1}{2}\eta^{2}\wedge(\theta^{\overline{3}''}-\theta^{3''}) &&\mod\{\eta^0\}.
\end{aligned}
\end{equation}
We choose functions $b,c$ that eliminate the coefficients of $\eta^1\wedge\eta^{\overline{1}}$ and $\eta^2\wedge\eta^{\overline{2}}$ in the identities for $\dd\eta^1,\dd\eta^2$ in \eqref{3approxSEmod}. Therefore, set 
\begin{align*}
 &c^1:=\frac{-1+\im}{4\sqrt{x_3}},
 &c^2:=\frac{1+\im}{4\sqrt{x_3}},
 &&b_1=b_2=0.
\end{align*}
Now we have 
\begin{equation}\label{3approxSE}
\begin{aligned}
\dd\eta^0&=\im\eta^1\wedge\eta^{\overline{1}}+\im\eta^{2}\wedge\eta^{\overline{2}}+\tfrac{1}{4\sqrt{x_3}}\eta^0\wedge((1-\im)\eta^1+(1+\im)\eta^{\overline{1}}+(1+\im)\eta^2+(1-\im)\eta^{\overline{2}}),\\
\dd\eta^{1}&=\eta^{3}\wedge\eta^{\overline{2}}+\tfrac{1-\im}{4\sqrt{x_3}}\eta^1\wedge\eta^{\overline{2}}+\tfrac{1-\im}{4\sqrt{x_3}}\eta^2\wedge\eta^{\overline{1}}+\tfrac{1}{2}\eta^{1}\wedge(\theta^{\overline{3}''}-\theta^{3''})-\tfrac{1}{8x_3}\eta^0\wedge(\im\eta^1+\eta^{\overline{1}}+\eta^2+\im\eta^{\overline{2}}) ,\\
\dd\eta^2&=\eta^{3}\wedge\eta^{\overline{1}}+\tfrac{1+\im}{4\sqrt{x_3}}\eta^2\wedge\eta^{\overline{1}}+\tfrac{1+\im}{4\sqrt{x_3}}\eta^1\wedge\eta^{\overline{2}}+\tfrac{1}{2}\eta^{2}\wedge(\theta^{\overline{3}''}-\theta^{3''})+\tfrac{1}{8x_3}\eta^0\wedge(\eta^1-\im\eta^{\overline{1}}-\im\eta^2+\eta^{\overline{2}})  ,\\
\dd\theta^{3''}&=\theta^{3''}\wedge\theta^{\overline{3}''}.
\end{aligned}
\end{equation}
Finally, we apply a $G_3$-transformation -- see \eqref{G3} -- to get 
\begin{align}\label{4approx}
\left[\begin{array}{cccc}\eta^0\\\eta^{1}\\\eta^{2}\\\eta^{3}\end{array}\right]=
\left[\begin{array}{rrrr}1&0&0&0\\0&1&0&0\\0&0&1&0\\c^3&0&0&1\end{array}\right]
\left[\begin{array}{cccc}\eta^0\\\eta^{1}\\\eta^{2}\\\theta^{3''}\end{array}\right],
\end{align}
which effects the following alteration of the latter three structure equations \eqref{3approxSE}
\begin{equation}\label{4approxSEc3}
\begin{aligned}
\dd\eta^{1}&=\eta^{3}\wedge\eta^{\overline{2}}+\tfrac{1-\im}{4\sqrt{x_3}}\eta^1\wedge\eta^{\overline{2}}+\tfrac{1-\im}{4\sqrt{x_3}}\eta^2\wedge\eta^{\overline{1}}+\tfrac{1}{2}\eta^{1}\wedge(\theta^{\overline{3}''}-\theta^{3''})\\
&+\tfrac{1}{8x_3}\eta^0\wedge((4x_3(\overline{c}^3-c^3)-\im)\eta^1-\eta^{\overline{1}}-\eta^2-(8x_3c^3+\im)\eta^{\overline{2}}) ,\\
\dd\eta^2&=\eta^{3}\wedge\eta^{\overline{1}}+\tfrac{1+\im}{4\sqrt{x_3}}\eta^2\wedge\eta^{\overline{1}}+\tfrac{1+\im}{4\sqrt{x_3}}\eta^1\wedge\eta^{\overline{2}}+\tfrac{1}{2}\eta^{2}\wedge(\theta^{\overline{3}''}-\theta^{3''})\\
&+\tfrac{1}{8x_3}\eta^0\wedge(\eta^1-(8x_3c^3+\im)\eta^{\overline{1}}+(4x_3(\overline{c}^3-c^3)-\im)\eta^2+\eta^{\overline{2}})  ,\\
\dd\eta^3&\equiv \eta^3\wedge\eta^{\overline{3}}+\im c^3\eta^{1}\wedge\eta^{\overline{1}}+\im c^3\eta^{2}\wedge\eta^{\overline{2}} &&\mod\{\eta^0\}.
\end{aligned}
\end{equation}
If we take 
\begin{equation*}
 \begin{aligned}
\gamma^1&\equiv \tfrac{1}{8x_3}((4x_3(\overline{c}^3-c^3)-\im)\eta^1-\eta^{\overline{1}}-\eta^2-(8x_3c^3+\im)\eta^{\overline{2}})\mod\{\eta^0\},\\
\gamma^2&\equiv \tfrac{1}{8x_3}(\eta^1-(8x_3c^3+\im)\eta^{\overline{1}}+(4x_3(\overline{c}^3-c^3)-\im)\eta^2+\eta^{\overline{2}})\mod\{\eta^0\},
 \end{aligned}
\end{equation*}
then we can equivalently express \eqref{4approxSEc3} as 
\begin{equation}\label{4approxSEgammas}
\begin{aligned}
\dd\eta^{1}&=-\gamma^1\wedge\eta^0+\eta^{3}\wedge\eta^{\overline{2}}+\tfrac{1-\im}{4\sqrt{x_3}}\eta^1\wedge\eta^{\overline{2}}+\tfrac{1-\im}{4\sqrt{x_3}}\eta^2\wedge\eta^{\overline{1}}+\tfrac{1}{2}\eta^{1}\wedge(\theta^{\overline{3}''}-\theta^{3''}),\\
\dd\eta^2&=-\gamma^2\wedge\eta^0+\eta^{3}\wedge\eta^{\overline{1}}+\tfrac{1+\im}{4\sqrt{x_3}}\eta^2\wedge\eta^{\overline{1}}+\tfrac{1+\im}{4\sqrt{x_3}}\eta^1\wedge\eta^{\overline{2}}+\tfrac{1}{2}\eta^{2}\wedge(\theta^{\overline{3}''}-\theta^{3''}),\\
\dd\eta^3&\equiv -\im\gamma^2\wedge\eta^1-\im\gamma^1\wedge\eta^2+\eta^3\wedge\eta^{\overline{3}}+\tfrac{\im16x_3c^3-1}{8x_3}(\eta^{1}\wedge\eta^{\overline{1}}+\eta^{2}\wedge\eta^{\overline{2}})\\
&-\tfrac{\im}{8x_3}\eta^1\wedge\eta^{\overline{2}}+\tfrac{\im}{8x_3}\eta^2\wedge\eta^{\overline{1}} &&\mod\{\eta^0\}.
\end{aligned}
\end{equation}
We select $c^3$ to eliminate the $\eta^1\wedge\eta^{\overline{1}}$ and $\eta^2\wedge\eta^{\overline{2}}$ terms in the identity \eqref{4approxSEgammas} for $\dd\eta^3$, viz,
\begin{align*}
 c^3:=-\frac{\im}{16x_3}.
\end{align*}

Now the forms $\eta^0,\eta^1,\eta^2,\eta^3$ on $M$ are completely determined. We summarize in terms of our $\mathbb{C}^4$ coordinates $z^1,z^2,z^3,z^4$, whose real parts we assume to be strictly positive (except for $z^4$),   
\begin{align*}
 \eta^0&=2\im\tfrac{z^3+z^{\overline{3}}}{z^1+z^{\overline{1}}}\dd z^1+2\im\tfrac{z^3+z^{\overline{3}}}{z^2+z^{\overline{2}}}\dd z^2+2\im\left(\ln\left(\tfrac{(z^1+z^{\overline{1}})(z^2+z^{\overline{2}})}{(z^3+z^{\overline{3}})^2}\right)-2\right)\dd z^3+2\im\dd z^4,\\
 &\\
 \eta^1&=\tfrac{(1-\im)\sqrt{z^3+z^{\overline{3}}}}{2(z^1+z^{\overline{1}})}\dd z^1-\tfrac{(1-\im)\sqrt{z^3+z^{\overline{3}}}}{2(z^2+z^{\overline{2}})}\dd z^2-\tfrac{(1+\im)}{2\sqrt{z^3+z^{\overline{3}}}}\ln\left(\tfrac{(z^1+z^{\overline{1}})(z^2+z^{\overline{2}})}{(z^3+z^{\overline{3}})^2}\right)\dd z^3-\tfrac{1+\im}{2\sqrt{z^3+z^{\overline{3}}}}\dd z^4,\\
 &\\
 \eta^2&=\tfrac{(1+\im)\sqrt{z^3+z^{\overline{3}}}}{2(z^1+z^{\overline{1}})}\dd z^1-\tfrac{(1+\im)\sqrt{z^3+z^{\overline{3}}}}{2(z^2+z^{\overline{2}})}\dd z^2-\tfrac{(1-\im)}{2\sqrt{z^3+z^{\overline{3}}}}\ln\left(\tfrac{(z^1+z^{\overline{1}})(z^2+z^{\overline{2}})}{(z^3+z^{\overline{3}})^2}\right)\dd z^3-\tfrac{1-\im}{2\sqrt{z^3+z^{\overline{3}}}}\dd z^4,\\
 &\\
 \eta^3&=\tfrac{1}{8(z^1+z^{\overline{1}})}\dd z^1+\tfrac{1}{8(z^2+z^{\overline{2}})}\dd z^2+\frac{6+\ln\left(\tfrac{(z^1+z^{\overline{1}})(z^2+z^{\overline{2}})}{(z^3+z^{\overline{3}})^2}\right)}{8(z^3+z^{\overline{3}})}\dd z^3+\tfrac{1}{8(z^3+z^{\overline{3}})}\dd z^4.
\end{align*}

The structure equations for these forms are 
\begin{equation}\label{4approxSE}
 \begin{aligned}
\dd\eta^0&=\im\eta^1\wedge\eta^{\overline{1}}+\im\eta^{2}\wedge\eta^{\overline{2}}+\tfrac{1}{4\sqrt{x_3}}\eta^0\wedge((1-\im)\eta^1+(1+\im)\eta^{\overline{1}}+(1+\im)\eta^2+(1-\im)\eta^{\overline{2}}),\\ 
\dd\eta^{1}&=\eta^{3}\wedge\eta^{\overline{2}}+\tfrac{1-\im}{4\sqrt{x_3}}\eta^1\wedge\eta^{\overline{2}}+\tfrac{1-\im}{4\sqrt{x_3}}\eta^2\wedge\eta^{\overline{1}}+\tfrac{1}{2}\eta^{1}\wedge(\eta^{\overline{3}}-\eta^{3})-\tfrac{1}{16x_3}\eta^0\wedge(\im\eta^1+2\eta^{\overline{1}}+2\eta^2+\im\eta^{\overline{2}}) ,\\
\dd\eta^2&=\eta^{3}\wedge\eta^{\overline{1}}+\tfrac{1+\im}{4\sqrt{x_3}}\eta^2\wedge\eta^{\overline{1}}+\tfrac{1+\im}{4\sqrt{x_3}}\eta^1\wedge\eta^{\overline{2}}+\tfrac{1}{2}\eta^{2}\wedge(\eta^{\overline{3}}-\eta^{3})+\tfrac{1}{16x_3}\eta^0\wedge(2\eta^1-\im\eta^{\overline{1}}-\im\eta^2+2\eta^{\overline{2}})  ,\\
\dd\eta^3&=\tfrac{1}{64(x_3)^{\nicefrac{3}{2}}}((1+\im)\eta^1-(1-\im)\eta^{\overline{1}}-(1-\im)\eta^2+(1+\im)\eta^{\overline{2}})\wedge\eta^0+\tfrac{1}{16x_3}\eta^1\wedge\eta^{\overline{1}}+\tfrac{1}{16x_3}\eta^2\wedge\eta^{\overline{2}}+\eta^3\wedge\eta^{\overline{3}},
 \end{aligned}
\end{equation}
which shows that the coframing $\eta^0,\eta^1,\eta^2,\eta^3$ of $M$ defines a section of the bundle $B_4\to M$ of 4-adapted coframes. If we denote the pullbacks along this section of the pseudoconnection forms on $B_4$ by their same names, then we write  
\begin{equation}\label{pseudoconnectionB4}
 \begin{aligned}
\tau&=\tfrac{1}{8\sqrt{x_3}}((1-\im)\eta^1+(1+\im)\eta^{\overline{1}}+(1+\im)\eta^2+(1-\im)\eta^{\overline{2}}),\\
\im\varrho&=\tfrac{1}{2}(\eta^{\overline{3}}-\eta^{3})+\tfrac{1}{8\sqrt{x_3}}((1-\im)\eta^1-(1+\im)\eta^{\overline{1}}-(1+\im)\eta^2+(1-\im)\eta^{\overline{2}}),\\
\im\varsigma&=\tfrac{1}{2}(\eta^{\overline{3}}-\eta^{3})-\tfrac{1}{8\sqrt{x_3}}((1-\im)\eta^1-(1+\im)\eta^{\overline{1}}-(1+\im)\eta^2+(1-\im)\eta^{\overline{2}}),\\
\gamma^1&=\tfrac{1+\im}{64(x_3)^{\nicefrac{3}{2}}}\eta^0-\tfrac{1}{16x_3}(\im\eta^1+2\eta^{\overline{1}}+2\eta^2+\im\eta^{\overline{2}}),\\
\gamma^2&=\tfrac{1-\im}{64(x_3)^{\nicefrac{3}{2}}}\eta^0+\tfrac{1}{16x_3}(2\eta^1-\im\eta^{\overline{1}}-\im\eta^2+2\eta^{\overline{2}}),
\end{aligned}
\end{equation}
and the structure equations \eqref{4approxSE} may be written according to \eqref{B4SE}
{\footnotesize
\begin{equation*}
\begin{aligned}
 \dd \left[\begin{array}{c}\eta^0\\\eta^1\\\eta^2\\\eta^3\end{array}\right]&=
 -\left[\begin{array}{cccc}2\tau&0&0&0\\\gamma^1&\tau+\im\varrho&0&0\\\gamma^2&0&\tau+\im\varsigma&0\\0&\im\gamma^2&\im\gamma^1&\im\varrho+\im\varsigma\end{array}\right]\wedge
 \left[\begin{array}{c}\eta^0\\\eta^1\\\eta^2\\\eta^3\end{array}\right] +
 \left[\begin{array}{c}
 \im\eta^1\wedge\eta^{\overline{1}}+\im\eta^2\wedge\eta^{\overline{2}}\\
 \eta^3\wedge\eta^{\overline{2}}+F^1\eta^{\overline{1}}\wedge\eta^2 \\
 \eta^3\wedge\eta^{\overline{1}}+F^2\eta^{\overline{2}}\wedge\eta^1 \\
 T^3_{\overline{1}}\eta^{\overline{1}}\wedge\eta^0+T^3_{\overline{2}}\eta^{\overline{2}}\wedge\eta^0+F^3_1\eta^{\overline{2}}\wedge\eta^1+F^3_2\eta^{\overline{1}}\wedge\eta^2\end{array}\right],
\end{aligned} 
\end{equation*}}
for 
\begin{align}\label{fundcoeff}
&F^1=-\frac{1-\im}{4\sqrt{z^3+z^{\overline{3}}}},
&&F^2=-\frac{1+\im}{4\sqrt{z^3+z^{\overline{3}}}},
\end{align}
\begin{align*}
&T^3_{\overline{1}}=-\frac{1-\im}{64(z^3+z^{\overline{3}})^{\nicefrac{3}{2}}},
&T^3_{\overline{2}}=\frac{1+\im}{64(z^3+z^{\overline{3}})^{\nicefrac{3}{2}}},
&&F^3_1=\frac{\im}{8(z^3+z^{\overline{3}})},
&&F^3_2=-\frac{\im}{8(z^3+z^{\overline{3}})}.
\end{align*}
In particular, the coefficients \eqref{fundcoeff} of the fundamental invariants \eqref{fundinv} are nonvanishing, so $M$ is not locally CR equivalent to the homogeneous model $M_\star$.

At this point, the forms $\eta,\varrho,\varsigma,\tau,\gamma$ on $M$ are adapted to the $B_4$ structure equations, so they define a section of the bundle $B_4^{(1)}\to M$, and they are exactly the pullbacks along this section of the tautological forms with the same names \eqref{prolongtaut} on $B_4^{(1)}$. Thus, to find the pullback of the full parallelism $\omega\in\Omega^1(B_4^{(1)},\mathfrak{su}_\star)$ as in \S\ref{FundInv}, it remains to find an expression for the pullback of $\psi$, which we will also call $\psi$. To accomplish this, we differentiate $\tau$ and $\gamma^1$ according to the structure equations \eqref{SE}. We begin with $\tau$, 
\begin{align*}
\dd\tau&=\tfrac{\im}{2}\gamma^1\wedge\eta^{\overline{1}}-\tfrac{\im}{2}\gamma^{\overline{1}}\wedge\eta^1+\tfrac{\im}{2}\gamma^2\wedge\eta^{\overline{2}}-\tfrac{\im}{2}\gamma^{\overline{2}}\wedge\eta^2\\
&+\frac{1}{128(x_3)^{\nicefrac{3}{2}}}\eta^0\wedge((1+\im)\eta^1+(1-\im)\eta^{\overline{1}}-(1-\im)\eta^2-(1+\im)\eta^{\overline{2}}), 
\end{align*}
so we see 
\begin{align*}
 \psi\equiv\frac{1}{128(x_3)^{\nicefrac{3}{2}}}((1+\im)\eta^1+(1-\im)\eta^{\overline{1}}-(1-\im)\eta^2-(1+\im)\eta^{\overline{2}})\mod\{\eta^0\}.
\end{align*}
To find the coefficient of $\eta^0$ in the full expansion of $\psi$, one takes the real part of the coefficient of $\eta^0\wedge\eta^1$ in the expression 
\begin{align*}
\dd\gamma^1-(\tau-\im\varrho)\wedge\gamma^1+\gamma^{\overline{2}}\wedge\eta^3-\im F^3_2\gamma^{\overline{1}}\wedge\eta^0-F^1\gamma^{\overline{1}}\wedge\eta^2+F^1\gamma^2\wedge\eta^{\overline{1}}. 
\end{align*}
We simply state that the result of this calculation is 
\begin{align*}
 \psi=\frac{1}{128(z^3+z^{\overline{3}})^2}\eta^0+\frac{1}{128(z^3+z^{\overline{3}})^{\nicefrac{3}{2}}}((1+\im)\eta^1+(1-\im)\eta^{\overline{1}}-(1-\im)\eta^2-(1+\im)\eta^{\overline{2}}).
\end{align*}
With this one-form in hand, the pullback of the parallelism $\omega$ to $M$ is completely determined.

\bibliographystyle{amsalpha}

\bibliography{References}

\end{document}